\tikzset{curve/.style={settings={#1},to path={(\tikztostart)
    .. controls ($(\tikztostart)!\pv{pos}!(\tikztotarget)!\pv{height}!270:(\tikztotarget)$)
    and ($(\tikztostart)!1-\pv{pos}!(\tikztotarget)!\pv{height}!270:(\tikztotarget)$)
    .. (\tikztotarget)\tikztonodes}},
    settings/.code={\tikzset{quiver/.cd,#1}
        \def\pv##1{\pgfkeysvalueof{/tikz/quiver/##1}}},
    quiver/.cd,pos/.initial=0.35,height/.initial=0}
\tikzset{tail reversed/.code={\pgfsetarrowsstart{tikzcd to}}}
\tikzset{2tail/.code={\pgfsetarrowsstart{Implies[reversed]}}}
\tikzset{2tail reversed/.code={\pgfsetarrowsstart{Implies}}}
\tikzset{no body/.style={/tikz/dash pattern=on 0 off 1mm}}
\newtheorem{thm}{Theorem}[section]
\newtheorem{prop}[thm]{Proposition}
\newtheorem{lem}[thm]{Lemma}
\newtheorem{corollary}[thm]{Corollary}
\theoremstyle{definition}
\newtheorem{ex}[thm]{Example}
\newcommand{\uxa}{\ensuremath{(\underline{X},\underline{A})}}
\newcommand{\ux}{\ensuremath{(\underline{X},\underline{\ast})}} 
\newcommand{\caa}{\ensuremath{(\underline{CA},\underline{A})}}
\newcommand{\clxx}{\ensuremath{(\underline{C\Omega X},\underline{\Omega X})}}
\newcommand{\kl}{\ensuremath{(\underline{K},\underline{L})}}
\newcommand{\cal}[1]{\ensuremath{\mathcal{#1}}}
\newcommand{\lk}{\ensuremath{\mathrm{lk}_{\cal{K}}(i)}}
\title{Loop spaces of polyhedral products associated with the polyhedral join product}
\author{Briony Eldridge}
\date{May 2024}
\begin{document}
\begin{abstract}
    We give a homotopy equivalence for the loop space of the moment-angle complex associated with a simplicial complex formed by the polyhedral join operation, and give necessary conditions for this loop space to be a finite type product of spheres and loops on spheres.
\end{abstract}
\maketitle

\section{Introduction}
Polyhedral products are topological spaces formed by attaching products of spaces together according to an underlying simplicial complex. The study of polyhedral products is of interest across many different disciplines of mathematics, including toric topology, combinatorics, geometric group theory, and complex geometry \cite{bahri2024polyhedralproductsfeatureshomotopy}. Specifically, let $K$ be a simplicial complex on $m$ vertices. For $1 \leq i \leq m$, let $(X_i,A_i)$ be a pair of pointed $CW$-complexes, where $A_i$ is a pointed $CW$ subcomplex of $X_i$. Let $(\underline{X}, \underline{A})= \{(X_i,A_i)\}^{m}_{i=1}$ be the sequence of $CW$-pairs. We can identify each simplex $\sigma \in K$ with the sequence $(i_1,\cdots,i_k)$ where $i_1,\cdots,i_k$ are the vertices of $\sigma$ and $i_1 < \cdots < i_k$. Define $(\underline{X},\underline{A})^{\sigma}$ as \[(\underline{X},\underline{A})^{\sigma} = \prod^{m}_{i=1} Y_i, \;\; Y_i = \begin{cases} X_i & i \in \sigma \\ A_i & \mathrm{otherwise}. \end{cases} \]
The \emph{polyhedral product associated to K} is defined as \[(\underline{X},\underline{A})^{K} =\bigcup_{\sigma \in K}  (\underline{X},\underline{A})^{\sigma} \subseteq \prod^{m}_{i=1} X_i.\]An important special case is the \emph{moment-angle complex}, denoted $\cal{Z}_K$, which is the polyhedral product where each pair is $(D^2,S^1)$. There are two problems that provide motivation for this work: how the homotopy type of the polyhedral product changes under simplicial operations, and determining the homotopy type of loop spaces of polyhedral products. 

It is well known that the polyhedral product on the join of two simplicial complexes is the Cartesian product of polyhedral products on each complex, and in \cite{connect} a homotopy equivalence for the loop space of the polyhedral product on the connected sum of simplicial complexes was given. In this paper, we consider how the homotopy theory of the moment-angle complex changes under the \emph{polyhedral join product} operation, which is defined similarly to the polyhedral product operation, but with pairs of spaces $(X_i,A_i)$ and the Cartesian product being replaced with pairs of simplicial complexes $(K_i,L_i)$ and the join operation. To be exact, let $M$ be a simplicial complex on $[m]$. For $1\leq i \leq m$, and let $(\underline{K}, \underline{L})= \{(K_i,L_i)\}^{m}_{i=1}$ be a sequence of pairs of simplicial complexes, where $L_i$ is a subcomplex on $K_i$, considered to be on the vertex set of $K_i$. We can identify each simplex $\sigma \in M$ with the sequence $(i_1,...,i_k)$ where $i_1,...,i_k$ are the vertices of $\sigma$ and $i_1 < \cdots < i_k$. Define $(\underline{K},\underline{L})^{*\sigma}$ as the following subcomplex of $K_1*\cdots*K_m$: 
    \[(\underline{K},\underline{L})^{*\sigma} = \overset{m}{\underset{i=1} \ast} \; Y_i,\;\; Y_i = \begin{cases} K_i & i \in \sigma \\ L_i & \mathrm{otherwise}. \end{cases}\]
    The \emph{polyhedral join product} is defined as 
    \[(\underline{K},\underline{L})^{*M} = \bigcup_{\sigma \in M} (\underline{K},\underline{L})^{*\sigma}.\]
    The polyhedral join product produces a new simplicial complex, but retains combinatorial qualities of the ingredient complexes. Just as the polyhedral product unifies various constructions of spaces across different fields, the polyhedral join product construction unifies various constructions of simplicial complexes. In \cite{Ewald_1986}, the concept of the simplicial wedge construction for toric varieties was introduced, and this construction was first considered in regards to the polyhedral product in \cite{bahri2015operations}. This was then generalised in \cite{construct} to the composition complex (at the time, this was named the substitution complex, but has since changed). The composition complex has applications in topological data analysis \cite{kara2020algebraiccombinatorialpropertiesweighted}. This construction is similar, but different to the substitution complex in \cite{Abramyan_2019}, where it was shown that specific substitution complexes are the smallest simplicial complexes that realise iterated higher Whitehead products. Substitution complexes have also been considered in geometric group theory \cite{MR2961283}. Recently, a specific example of the substitution complex was used to prove a stability theorem for bigraded persistent double homology \cite{bahri2024stabilitytheorembigradedpersistence}. Both the substitution complex and the construction complex are specific examples of polyhedral join products, which was first defined in \cite{construct}. Polyhedral products and polyhedral join products have a nice relation, as characterised in \cite{Vidaurre_2018} (Theorem 2.9). Recently, Steenrod operations with relation to polyhedral join products has been explored \cite{agarwal2024steenrodoperationspolyhedralproducts}.

In this paper, we obtain a homotopy equivalence for $\Omega \uxa^{\kl^{*M}}$. To do this, we utilise facts about the underlying simplicial complex $M$. Recall that the \emph{link} of a vertex is the subcomplex $\mathrm{lk}_M(v) = \{ \tau \in M | \{v\} \cap \tau = \emptyset, \{v\} \cup \tau \in K\}$ and the \emph{restriction} of a vertex is the subcomplex $M \setminus v = \{ \tau \in M | \{v\} \cap \tau = \emptyset\}$.
\begin{thm}\label{mainthrm1}
    Let $\kl^{*M}$ be a polyhedral join product. There is a homotopy equivalence 
     \[\Omega \uxa^{\kl^{*M}} \simeq \prod_{i=1}^m \Omega \uxa^{K_i}  \times \prod_{i=k}^m \Omega (G_i*H_i)\]
     where $G_i$ is the homotopy fibre of the map $\uxa^{\kl^{*\mathrm{lk}_{M\setminus \{m,\cdots,i+1\}}(i)}} \to \uxa^{\kl^{*M \setminus \{m,\cdots,i\}}}$, $H_i$ is the homotopy fibre of the map $\uxa^{L_i} \to \uxa^{K_i}$ and $k \in [m]$ is such that $M \setminus \{m,\cdots,k\}$ is a simplex. 
\end{thm}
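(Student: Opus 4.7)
The plan is to combine Vidaurre's Theorem 2.9 from \cite{Vidaurre_2018}, which identifies $\uxa^{\kl^{*M}}$ with a nested polyhedral product $(\underline{Z}, \underline{W})^{M}$ where $Z_i = \uxa^{K_i}$ and $W_i = \uxa^{L_i}$, with the pushout-based loop space splitting technique from \cite{connect}, and then induct on the integer $m - k + 1$, the number of vertices of $M$ beyond its initial simplex.

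Set $M_i = M \setminus \{m, \dots, i+1\}$, so that $M_{k-1}$ is a simplex and $M_m = M$. For the base case $i = k-1$, the polyhedral join over a simplex collapses to a simplicial join, so $\uxa^{\kl^{*M_{k-1}}}$ is a Cartesian product of factors $Z_j$ (for $j$ in the simplex) and $W_j$ (for the remaining indices), and its loop space factors accordingly.

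The inductive step passes from $M_{i-1}$ to $M_i$ via the simplicial pushout
\[
M_i = M_{i-1} \cup_{\mathrm{lk}_{M_i}(i)} \mathrm{star}_{M_i}(i),
\]
which, after applying $\kl^{*(-)}$ followed by $\uxa^{(-)}$, becomes a homotopy pushout of spaces. The crucial simplification is that $\mathrm{star}_{M_i}(i) = \{i\} \ast \mathrm{lk}_{M_i}(i)$, so the join-to-product property gives $\uxa^{\kl^{*\mathrm{star}_{M_i}(i)}} \simeq Z_i \times N_i$ and $\uxa^{\kl^{*\mathrm{lk}_{M_i}(i)}} \simeq W_i \times N_i$ for a common auxiliary space $N_i$, with the inclusion induced by $W_i \hookrightarrow Z_i$ on the first coordinate. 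The homotopy fibre of this inclusion is $H_i$, while the homotopy fibre of the other leg of the pushout is, by definition, $G_i$. Applying the loop space splitting theorem of \cite{connect} to this pushout introduces the factor $\Omega(G_i \ast H_i)$ and upgrades the $\Omega W_i$ contribution into an $\Omega Z_i$ contribution. Iterating from $i = k$ up to $i = m$ converts each $\Omega W_j$ (for $j \geq k$) into $\Omega Z_j$ and accumulates the factors $\prod_{j=k}^m \Omega(G_j \ast H_j)$, yielding the claimed equivalence.

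The main obstacle is verifying that the loop space splitting theorem of \cite{connect} applies at each inductive step --- in particular, that the pushout square has the structural properties required to produce the factorisation described above, and that the homotopy fibres produced by the splitting are correctly identified as the $G_i$ and $H_i$ of the statement. A secondary technical concern is ensuring that the simplicial pushout descends to a genuine homotopy pushout after both $\kl^{*(-)}$ and $\uxa^{(-)}$ are applied, which relies on suitable cofibration properties of the inclusions $L_i \hookrightarrow K_i$ and $A_i \hookrightarrow X_i$.
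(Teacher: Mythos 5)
Your proposal follows essentially the same route as the paper: delete one vertex of $M$ at a time, recognise that the resulting pushout of polyhedral products is a square of Cartesian products whose legs are of the form $g\times 1$ and $1\times h$, split the loop space using the join $G_i*H_i$ of the two fibres, and iterate until $M\setminus\{m,\dots,k\}$ is a simplex. The paper arrives at the same pushout purely combinatorially (Lemma \ref{3.5} and Corollary \ref{pjppushout}) rather than via Vidaurre's identification, and runs the induction in the opposite direction, but those differences are cosmetic.

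The one substantive gap is your appeal to \cite{connect} for the splitting itself. The theorem there concerns connected sums of simplicial complexes, a pushout over a common facet, and does not literally cover the square you construct; this is precisely why the paper devotes Section 4 to proving the needed statement as Theorem \ref{the1}. That proof maps all four corners of the pushout into $\uxa^{M}\times\uxa^{K}$, applies the Cube Lemma to the resulting cube of fibres, checks (Lemma \ref{jfib}) that the maps $G\times H\to G$ and $G\times H\to H$ are projections so that the homotopy pushout of fibres is the join $G*H$, and then obtains the right homotopy inverse after looping from the inclusion of the wedge $\uxa^{M}\vee\uxa^{K}$ into the product. You have correctly identified all the structure this argument needs, so the gap is one of deferred proof rather than of strategy, but as written the citation does not discharge it. A smaller point: your base case allows leftover $W_j$ factors, whereas for the stated formula to hold $M\setminus\{m,\dots,k\}$ must be the full simplex on $[k-1]$, so that $\kl^{*M\setminus\{m,\dots,k\}}=K_1*\cdots*K_{k-1}$ and only $\Omega\uxa^{K_j}$ factors survive.
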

 Such a homotopy equivalence allows us to explore the homotopy type of $\Omega \uxa^{\kl^{*M}}$, which is the second problem motivating this work.

Let \cal{P} be the full subcategory of topological spaces that are homotopy equivalent to a finite type product of spheres and loops on spheres. We can then consider when $\Omega \uxa^K$ belongs to  $\cal{P}$. This is part of a wider body of work investigating Anick's conjecture. Anick's conjecture states that if a space $X$ is a finite, connected $CW$-complex, then localised at almost all primes $p$, $\Omega X$ decomposes as a finite type product of spheres, loops on spheres, and indecomposable space related to Moore spaces. Currently, there are two known types of simplicial complexes that give rise to polyhedral products  such that $\Omega \caa^{K} \in \cal{P}$; that is to say, they verify Anick's conjecture without the need to localise. One type is simplicial complexes such that the associated polyhedral product $\caa^K$ belongs to the collection of topological spaces which are homotopy equivalent to a finite type wedge of spheres, denoted $\cal{W}$. Much progress has been made in determining which families of simplicial complexes give rise to such polyhedral products. For example, when $K$ is shifted \cite{grbic2011homotopy}, flag with chordal $1$-skeleton \cite{Grbicflag}, and most generally, a totally fillable complex \cite{IRIYE2013716}, then $\caa^K \in \cal{W}$. If $\caa^K \in \cal{W}$, then the Hilton-Milnor theorem implies that $\Omega \caa^K \in \cal{P}$. Many polyhedral products are not wedges of spheres, but their loop space belongs to $\cal{P}$, and recently much progress in this area has been made. \cite{stanton2024loop} showed that when $K$ is the $k$-skeleton of a flag complex, $\Omega \caa^K \in \cal{P}$ and in\cite{vylegzhanin2024loop} an explicit loop space decomposition was given. Furthermore, in \cite{stanton2024loopspacedecompositionsmomentangle}, it was shown that $\Omega \caa^K \in \cal{P}$ if and only if each full subcomplex $K_I$ with a complete 1-skeleton is such that $\Omega \caa^{K_I} \in \cal{P}$. However, the polyhedral join product produces large simplicial complexes, and it is hard to classify all subcomplexes with a complete $1$-skeleton, and so a different approach is required. For the polyhedral join product, we prove the following.
\begin{thm}\label{mainthrm2}
   Let $M$ be a simplicial complex on $[m]$ vertices. Let $\kl^{*M}$ be a polyhedral join product and let $K_i$ be a simplicial complex on the vertex set $[k_i]$ for $i \in [m]$. Let $\caa$ be a sequence of pairs $(CA_j,A_j)$ where $1 \leq j \leq k_1 + \cdots + k_m$. Suppose $\Sigma A_j \in \cal{W}$ for all  $j$. Let $H_i$ be the homotopy fibre of the map $\caa^{L_i} \to \caa^{K_i}$ for $i \in [m]$. If $\Omega \caa^{K_i} \in \cal{P}$ and $\Sigma H_i \in \cal{W}$ for each $i \in [m]$, then $\Omega \caa^{\kl^{*M}} \in \cal{P}$.
\end{thm}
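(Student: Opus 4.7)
The natural approach is induction on $m$, the number of vertices of $M$. The base case $m=1$ is immediate, since $\kl^{*M} = K_1$ and so $\Omega\caa^{\kl^{*M}} = \Omega\caa^{K_1}\in\cal{P}$ by hypothesis. For the inductive step, I apply Theorem~\ref{mainthrm1} to obtain the homotopy equivalence
\[\Omega\caa^{\kl^{*M}} \;\simeq\; \prod_{i=1}^m \Omega\caa^{K_i} \;\times\; \prod_{i=k}^m \Omega(G_i * H_i).\]
Because the class $\cal{P}$ is closed under finite products, it suffices to show each factor lies in $\cal{P}$. The first product is in $\cal{P}$ immediately from the hypothesis $\Omega\caa^{K_i}\in\cal{P}$.

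For each factor $\Omega(G_i * H_i)$, I would use the standard join--suspension equivalence $X*Y\simeq\Sigma(X\wedge Y)$ together with the identity $\Sigma(X\wedge Y)\simeq X\wedge\Sigma Y$ to rewrite
\[G_i * H_i \;\simeq\; G_i\wedge\Sigma H_i.\]
Since $\Sigma H_i\in\cal{W}$ by hypothesis, I write $\Sigma H_i\simeq\bigvee_\alpha S^{n_\alpha}$ with each $n_\alpha\geq 1$, so that
\[G_i\wedge\Sigma H_i \;\simeq\; \bigvee_\alpha \Sigma^{n_\alpha-1}(\Sigma G_i).\]
Hence if $\Sigma G_i\in\cal{W}$, each summand is a wedge of spheres, so $G_i * H_i\in\cal{W}$, and the Hilton--Milnor theorem yields $\Omega(G_i * H_i)\in\cal{P}$, as required. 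Note that $G_i*H_i$ is $2$-connected as a join of path-connected spaces, so the resulting wedge of spheres sits in dimensions $\geq 2$ and Hilton--Milnor applies cleanly.

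The main obstacle is therefore to show $\Sigma G_i\in\cal{W}$ for each $i$. Here $G_i$ is the homotopy fibre of an inclusion $\caa^{\kl^{*N_1}}\to\caa^{\kl^{*N_2}}$ of polyhedral products, where $N_1\subseteq N_2$ are two simplicial complexes on the smaller vertex set $\{1,\ldots,i-1\}$. I would strengthen the inductive hypothesis to include the stable statement that $\Sigma\caa^{\kl^{*M'}}\in\cal{W}$ for every polyhedral join on fewer than $m$ vertices satisfying the same hypotheses. This stable statement follows from the Bahri--Bendersky--Cohen--Gitler stable splitting of a polyhedral product, combined with the hypothesis $\Sigma A_j\in\cal{W}$, which ensures every smash-product summand of the splitting is a wedge of spheres after suspension. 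Consequently both $\Sigma\caa^{\kl^{*N_1}}$ and $\Sigma\caa^{\kl^{*N_2}}$ lie in $\cal{W}$, and the cofibre $C_i = \caa^{\kl^{*N_2}}/\caa^{\kl^{*N_1}}$ also satisfies $\Sigma C_i\in\cal{W}$. A James--Ganea-style comparison relating the homotopy fibre $G_i$ to $\Omega C_i$, exploiting that $\Omega\caa^{\kl^{*N_2}}\in\cal{P}$ by the inductive hypothesis on $m$, should then yield $\Sigma G_i\in\cal{W}$ and close the induction; making this final comparison fully rigorous is the principal technical content of the argument.
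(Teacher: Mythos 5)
Your outline --- induct, split off $\prod_i\Omega\caa^{K_i}$ via Theorem \ref{the1}/\ref{mainthrm1}, and reduce to showing each $G_i*H_i\simeq G_i\wedge\Sigma H_i$ lies in $\cal{W}$ --- is the same skeleton as the paper's, but there is a genuine gap at exactly the point you flag: you never establish $\Sigma G_i\in\cal{W}$, and neither of your proposed inputs delivers it. The Bahri--Bendersky--Cohen--Gitler splitting writes $\Sigma\caa^{\kl^{*N}}$ as a wedge of pieces $|(\kl^{*N})_I|*\widehat{A}^I$; the hypothesis $\Sigma A_j\in\cal{W}$ controls the smash factors $\widehat{A}^I$ but says nothing about the geometric realisations of the full subcomplexes, which need not be (even stably) wedges of spheres under the stated hypotheses. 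Indeed Section 7 of the paper is devoted to examples satisfying these hypotheses with $\caa^{\kl^{*M}}\notin\cal{W}$, so the strengthened inductive statement $\Sigma\caa^{\kl^{*M'}}\in\cal{W}$ is not available. The further passage from $\Sigma C_i\in\cal{W}$ to $\Sigma G_i\in\cal{W}$ is also not a formal consequence, and you leave it as ``the principal technical content'', so the argument does not close.

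The paper's resolution is to avoid $\Sigma G_i\in\cal{W}$ altogether: only $\Sigma^k G_i\in\cal{W}$ is needed, where $k$ is the connectivity threshold of $A_i$, and this weaker statement is obtained by a retraction argument. The paper proceeds one vertex at a time through the sequential pushouts $M(i-1)\subset M(i)$ of Lemma \ref{seq}, applying Theorem \ref{pjppreserve} at each stage: Lemma \ref{fibsplit1} shows $\Omega\Sigma(G_i\wedge A_i)$ retracts off $\Omega\caa^{M(i-1)}\in\cal{P}$, whence $\Sigma(G_i\wedge A_i)\in\cal{W}$ by Lemmas \ref{retractinp} and \ref{1.3}, and smashing $G_i$ against the bottom sphere of $\Sigma A_i$ yields $\Sigma^{k}G_i\in\cal{W}$ (Lemma \ref{minsphere}). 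The connectivity estimate of Lemma \ref{connected1} then guarantees that $H_i$ is at least as connected as $A_i$, so every sphere in $\Sigma H_i$ has dimension at least $k$, and $G_i\wedge\Sigma H_i$ is a wedge of spaces $\Sigma^{j}G_i$ with $j\geq k$, all of which lie in $\cal{W}$. If you want to repair your proof, replace the goal $\Sigma G_i\in\cal{W}$ by this $\Sigma^{k}G_i$ statement and derive it from the hypothesis $\Omega\caa^{M}\in\cal{P}$, which drives the base case of the induction in Theorem \ref{pjppreserve} (and which, as an aside, is needed but omitted from the statement of Theorem \ref{mainthrm2} as printed).
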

Theorem \ref{mainthrm2} gives the conditions under which polyhedral products associated with polyhedral join products verify Anick's conjecture; conditions that depend on the pairs $\kl$ and the polyhedral product $\caa^M$. We  use Theorem \ref{mainthrm2} in section 7 to generate new families of simplicial complexes such that $\Omega \cal{Z}_{\kl^{*M}} \in \cal{P}$, but the simplicial complex $\kl^{*M}$ is not the $k$-skeleton of a flag complex, or such that $\cal{Z}_{\kl^{*M}} \in \cal{W}$. To do this, we analyze the substitution complex.  
\par
The structure of this paper is as follows. In Section 2 we cover some preliminary results, and in Section 3 we prove many conbinatorial results concerning the polyhedral join product. In Section 4 we prove a homotopy equivalence concerning polyhedral products, and in Section 5 we apply this to the polyhedral join product. In Section 6 we consider when the polyhedral join product preserves the property of $\Omega \caa^{\kl^{*M}} \in \cal{P}$. Finally, in Section 7, we define new families of simplicial complexes for which  $\Omega \caa^{\kl^{*M}} \in \cal{P}$.
The author would like to thank Stephen Theriault for many valuable and constructive discussions during the preparation of this work. The author would also like to thank Lewis Stanton for reading a draft of this work and providing insightful comments. 
\section{Preliminary results}
\subsection{Properties of \cal{W} and \cal{P}}
Let $\cal{W}$ denote the collection of topological spaces that are homotopy equivalent to a finite type wedge of simply connected spheres, and let $\cal{P}$ be the collection of $H$-spaces which are homotopy equivalent to a finite type product of spheres and loops on simply connected spheres. Relations between spaces in $\cal{P}$ and $\cal{W}$ proved in this section will be used throughout the paper. We begin by stating several results, the first of which follows from the Hilton-Milnor Theorem.

\begin{lem}\label{loopinp}
    If $X \in \cal{W}$, then $\Omega {X} \in \cal{P}. \qed $ 
\end{lem}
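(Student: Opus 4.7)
The plan is to reduce to the Hilton--Milnor theorem directly. Write $X\simeq\bigvee_{\alpha\in A} S^{n_\alpha+1}$ as a finite type wedge of simply connected spheres, so that $X\simeq \Sigma\bigl(\bigvee_{\alpha\in A}S^{n_\alpha}\bigr)$. Since $X$ is of finite type, for each integer $N$ only finitely many indices $\alpha$ have $n_\alpha+1\leq N$.

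Next I would apply the Hilton--Milnor theorem to $\Omega X=\Omega\Sigma\bigl(\bigvee_\alpha S^{n_\alpha}\bigr)$. This produces a homotopy equivalence
\[
\Omega X \;\simeq\; \prod_{\omega\in B}\Omega S^{m_\omega+1},
\]
where $B$ indexes a Hall basis of basic Whitehead products in the $S^{n_\alpha}$, and $m_\omega$ is the sum of the $n_\alpha$'s appearing in the basic product $\omega$ (with multiplicity). Each factor on the right is the loop space on a simply connected sphere, which is exactly one of the building blocks allowed in the definition of $\cal P$.

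It remains to verify the finite type condition. The connectivity of a basic product of length $\ell$ built from factors of dimensions $n_{\alpha_1},\dots,n_{\alpha_\ell}$ equals $n_{\alpha_1}+\cdots+n_{\alpha_\ell}$, which tends to infinity with $\ell$ since every $n_\alpha\geq 1$, and for each fixed bound only finitely many spheres of dimension below that bound appear in $X$. Hence for each $N$ there are only finitely many $\omega\in B$ with $m_\omega+1\leq N$, so the product $\prod_{\omega\in B}\Omega S^{m_\omega+1}$ is of finite type and lies in $\cal P$.

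The argument has no real obstacle; the main thing to be careful about is to state Hilton--Milnor in the form that applies to a wedge of suspensions (using $S^{n_\alpha+1}=\Sigma S^{n_\alpha}$) and to check that simple connectivity of each sphere guarantees that the resulting infinite product is of finite type, so that $\Omega X\in\cal P$ as claimed.
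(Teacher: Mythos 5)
Your argument is correct and is exactly the route the paper takes: the paper states this lemma without proof as an immediate consequence of the Hilton--Milnor theorem, which is precisely the decomposition $\Omega X \simeq \prod_{\omega} \Omega S^{m_\omega+1}$ over basic products that you spell out. Your additional check that simple connectivity of the wedge summands forces the product to be of finite type is the right point to verify and is handled correctly.
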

The following two lemmas are well known results. For proofs, see \cite{soton467338} and \cite{connected2020amelotte} respectively.
\begin{lem}\label{sw}
    If $X \in \cal{P}$ then $\Sigma X \in \cal{W}. \qed$
\end{lem}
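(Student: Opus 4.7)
The plan is to reduce the claim to two classical facts: the suspension splitting of a product and the James splitting of $\Sigma \Omega S^{m}$. Writing $X \in \cal{P}$ as a finite type product $\prod_{\alpha} S^{n_\alpha} \times \prod_{\beta} \Omega S^{m_\beta}$ with each $n_\alpha, m_\beta \geq 2$, I would first apply the standard equivalence
\[ \Sigma(A \times B) \simeq \Sigma A \vee \Sigma B \vee \Sigma(A \wedge B) \]
iteratively to rewrite $\Sigma X$ as a wedge whose summands each have the form $\Sigma(Z_1 \wedge \cdots \wedge Z_r)$, where each $Z_i$ is one of the original factors. This reduces the problem to showing each such summand is itself a finite type wedge of simply connected spheres.

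For this, I would split into two cases. If every $Z_i$ is a sphere, then $Z_1 \wedge \cdots \wedge Z_r$ is again a sphere and the suspension is a sphere, so we are done. If at least one $Z_i$ equals $\Omega S^{m}$, I would move a suspension into the smash and apply the James splitting
\[ \Sigma \Omega S^{m} \simeq \bigvee_{k \geq 1} S^{k(m-1)+1}, \]
which is valid since $m \geq 2$. Distributing the remaining smash factors over this wedge, the summand becomes a wedge indexed by $k \geq 1$ whose entries each involve one fewer loop factor than before. Iterating this reduction, one loop factor at a time, eventually eliminates every loop factor and presents the original summand as a wedge of simply connected spheres.

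The main point that needs watching is the finite type condition: the James splitting contributes countably many summands, but since $m \geq 2$ each dimension is hit by only finitely many of them, and this property is preserved by subsequent smashes with spheres and by taking further wedges of finite type wedges of spheres. Reassembling all the pieces exhibits $\Sigma X$ as a finite type wedge of simply connected spheres, so $\Sigma X \in \cal{W}$. I expect the main obstacle to be purely organisational, namely the bookkeeping needed when several loop factors sit inside the same smash summand; handling them one at a time and distributing wedges across smashes keeps the induction manageable.
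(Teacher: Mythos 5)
Your argument is correct, and it is essentially the standard proof of this well-known fact; the paper itself offers no proof, deferring instead to the cited reference, where the argument runs along the same lines you describe. The two ingredients you use -- the suspension splitting of a product into a wedge of suspended smash products of its factors, followed by the James splitting $\Sigma\Omega S^{m}\simeq\bigvee_{k\geq 1}S^{k(m-1)+1}$ to eliminate loop factors one at a time -- are exactly the right ones, and your bookkeeping of the finite type condition is sound, since $m\geq 2$ forces the dimensions $k(m-1)+1$ to tend to infinity. The one step worth making explicit is the very first: for an \emph{infinite} (finite type) product the equivalence $\Sigma(A\times B)\simeq\Sigma A\vee\Sigma B\vee\Sigma(A\wedge B)$ cannot literally be applied ``iteratively''; one must pass to the colimit over finite subproducts and observe that a smash of $r$ simply connected factors is at least $(2r-1)$-connected, so that in any fixed dimension only finitely many wedge summands contribute and the splitting persists in the limit. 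With that remark added, your proof is complete and matches the intent of the reference the paper points to.
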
 
\begin{lem}\label{retractinw}
    Suppose $X \in \cal{W}$, and let $A$ be a space which retracts off $X$. Then $A \in \cal{W}. \qed$
\end{lem}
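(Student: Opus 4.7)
The plan is to construct an explicit homotopy equivalence $\bigvee_k S^{m_k} \to A$ by realising a homology basis of $A$ through maps that factor through $X$. First I would record the setup: write $X \simeq \bigvee_\alpha S^{n_\alpha}$ with each $n_\alpha \geq 2$ and only finitely many summands in each dimension, and fix retraction data $i : A \to X$, $r : X \to A$ with $r \circ i \simeq \mathrm{id}_A$. Since $r_* \circ i_* = \mathrm{id}$, the induced map $H_*(A;\mathbb{Z}) \to H_*(X;\mathbb{Z})$ is split injective, so $\widetilde H_*(A;\mathbb{Z})$ is a direct summand of $\widetilde H_*(X;\mathbb{Z})$. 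In particular it is free abelian, of finite type, concentrated in degrees $\geq 2$, and $A$ is simply connected. Choose a homogeneous basis $\{a_k\}_k$ of $\widetilde H_*(A;\mathbb{Z})$ with $a_k \in H_{m_k}(A)$.

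Next, for each $k$ the class $i_*(a_k) \in H_{m_k}(X)$ is a finite integer linear combination of fundamental classes of those wedge summands $S^{n_\beta}$ with $n_\beta = m_k$. The corresponding formal sum of wedge-summand inclusions gives a well-defined element $\psi_k \in \pi_{m_k}(X)$, hence a map $\psi_k : S^{m_k} \to X$ satisfying $(\psi_k)_*[S^{m_k}] = i_*(a_k)$. Composing with the retraction, set $\phi_k := r \circ \psi_k : S^{m_k} \to A$. Then $(\phi_k)_*[S^{m_k}] = r_* i_*(a_k) = a_k$.

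Finally I would assemble these into a single map
\[
\phi \;:=\; \bigvee_k \phi_k \;:\; \bigvee_k S^{m_k} \longrightarrow A.
\]
By construction $\phi_*$ carries the standard basis of $\widetilde H_*\bigl(\bigvee_k S^{m_k};\mathbb{Z}\bigr)$ onto the chosen basis $\{a_k\}$ of $\widetilde H_*(A;\mathbb{Z})$, so $\phi_*$ is an isomorphism on integral homology. Both source and target are simply connected and have the homotopy type of CW complexes (replacing $A$ by a CW approximation if necessary), so Whitehead's theorem gives that $\phi$ is a homotopy equivalence, proving $A \in \cal{W}$.

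The only real subtlety is in the middle step: one must check that summing the wedge-summand inclusions actually defines an element of $\pi_{m_k}(X)$. This is where the hypotheses $n_\alpha \geq 2$ (so that $\pi_{m_k}(X)$ is abelian and the sum of maps is meaningful) and the finite-type condition on $X$ (so that, dimension by dimension, only finitely many summands appear in each linear combination) are both used.
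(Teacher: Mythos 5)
Your proof is correct, but there is nothing in the paper to compare it against: the paper states this lemma with a $\qed$ and defers to the cited references rather than proving it. Your argument is the standard one and is complete: the retraction splits $\widetilde H_*(A;\mathbb{Z})$ off the finite type free abelian group $\widetilde H_*(X;\mathbb{Z})$, a chosen basis is realised by maps $S^{m_k}\to X\to A$ using that the Hurewicz map of a wedge of simply connected spheres is surjective (indeed split) in each degree, and the resulting map $\bigvee_k S^{m_k}\to A$ is a homology isomorphism of simply connected spaces of CW type, hence a homotopy equivalence by Whitehead. You also correctly identify where finite type and $n_\alpha\geq 2$ enter. The only presentational slip is that the simple connectivity of $A$ does not follow from its homology being concentrated in degrees $\geq 2$; it follows from the retraction itself, since $\pi_1(A)$ is a retract of $\pi_1(X)=0$ (and likewise $\pi_0(A)$ is a point). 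With that sentence adjusted, your argument is a complete, self-contained proof of a fact the paper leaves to the literature.
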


 \begin{lem}\cite{stanton2024loop}\label{retractinp}
    Let $X \in \cal{P}$, and let $A$ be a space which retracts off $X$. Then $A \in \cal{P}. \qed$
\end{lem}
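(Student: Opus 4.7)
The plan is to transport structure across the retraction $i\colon A \to X$, $r\colon X \to A$ with $r\circ i \simeq \mathrm{id}_A$, and then rebuild an explicit product decomposition of $A$ from the factors of $X$. Write $X \simeq \prod_\alpha S^{n_\alpha} \times \prod_\beta \Omega S^{m_\beta}$ for the decomposition witnessing $X \in \cal{P}$.

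First I would identify $\Sigma A$ using the two preceding lemmas. By Lemma \ref{sw}, $\Sigma X \in \cal{W}$; since the suspended retraction presents $\Sigma A$ as a retract of $\Sigma X$, Lemma \ref{retractinw} then gives $\Sigma A \in \cal{W}$, say $\Sigma A \simeq \bigvee_\gamma S^{n_\gamma+1}$ is a finite type wedge of simply connected spheres. Next, I would equip $A$ with a homotopy-unital $H$-structure $\mu_A = r \circ \mu \circ (i \times i)$ inherited from the multiplication $\mu$ on $X$. Via the James construction this causes the adjoint $A \to \Omega \Sigma A \simeq J(A)$ to split by the multiplicative extension of $\mathrm{id}_A$, so $A$ is a retract of $\Omega \Sigma A$; and by Hilton-Milnor applied to $\Sigma A \in \cal{W}$, $\Omega \Sigma A$ is a finite type product of loops on simply connected spheres and so lies in $\cal{P}$ by Lemma \ref{loopinp}.

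The main obstacle is upgrading the conclusion that $A$ is a retract of a space in $\cal{P}$ to the statement $A \in \cal{P}$ itself. Candidate component maps to the sphere and loop factors come from composing the Hilton-Milnor projections with the James retraction onto $A$ and, on the sphere side, from pulling sphere-factor projections of $X$ back through $i$; these maps should assemble via $\mu_A$ into a homotopy equivalence of $A$ with a finite type product of spheres and loops on simply connected spheres. Verifying that the assembly is an equivalence reduces, via a Whitehead-type argument, to a Hopf algebra computation: one must show that $H_*(A)$, as a sub-Hopf algebra of the tensor product $H_*(X)$ of exterior algebras on odd-degree generators and polynomial algebras on even-degree generators, itself splits as a tensor product of the same form, and then realize this algebraic splitting geometrically using $\mu_A$. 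This Hopf algebra step is the crux of the cited result from \cite{stanton2024loop}.
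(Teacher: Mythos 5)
The paper does not actually prove this lemma: it is quoted with a citation to \cite{stanton2024loop}, so there is no in-paper argument to compare against, only the question of whether your sketch would constitute a proof. Your opening moves are sound: $\Sigma A$ retracts off $\Sigma X$, so Lemmas \ref{sw} and \ref{retractinw} give $\Sigma A \in \cal{W}$; and $A$, being a connected retract of an $H$-space, inherits an $H$-structure $\mu_A$ and therefore retracts off $\Omega\Sigma A$, which lies in $\cal{P}$ by Hilton--Milnor and Lemma \ref{loopinp}. But at that point you have only shown that $A$ is a retract of a space in $\cal{P}$ --- which is verbatim the hypothesis you started with, now with $\Omega\Sigma A$ in place of $X$. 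The reduction is circular, and the entire content of the lemma is concentrated in the final ``assembly'' step, which you explicitly defer to the cited reference. As written, the proposal reduces the lemma to an instance of itself plus an appeal to \cite{stanton2024loop}; it is not a proof.

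The Hopf-algebra step you propose to close the gap is also not set up correctly. With $\mu_A = r\circ\mu\circ(i\times i)$, neither $i$ nor $r$ is an $H$-map unless $i\circ r\simeq \mathrm{id}_X$, so $H_*(A)$ is a direct summand of $H_*(X)$ only as a coalgebra; it is not a sub-Hopf-algebra of the tensor product of exterior and polynomial algebras, and its product is $r_*\mu_*(i_*\otimes i_*)$ rather than the restricted product. Even granting an algebraic splitting of $H_*(A)$ of the desired form, the real difficulty is geometric realization without localization: one must produce maps from spheres and loops on spheres into $A$ (using the $H$-structure and the wedge decomposition of $\Sigma A$), multiply them with $\mu_A$, and verify a homology isomorphism, integrally. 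That is precisely the crux you label as ``the cited result,'' so the central step of the argument is missing.
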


\begin{lem}\label{1.3}
    Let $X$ be a pointed $CW$-complex, and suppose $\Omega \Sigma X \in \cal{P}$. Then $\Sigma X \in \cal{W}$. 
\end{lem}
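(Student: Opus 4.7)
The plan is to exhibit $\Sigma X$ as a retract of $\Sigma \Omega \Sigma X$ via the James splitting, and then transfer membership in $\cal{W}$ along this retraction using the two preceding lemmas.

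First, I would observe that the hypothesis $\Omega \Sigma X \in \cal{P}$ forces $X$ to be path-connected. Every factor appearing in a space of $\cal{P}$, namely a sphere or the loops on a simply connected sphere, is path-connected, so $\Omega \Sigma X$ is path-connected. Hence $\pi_1(\Sigma X) = \pi_0(\Omega \Sigma X)$ is trivial; since $\pi_1(\Sigma X)$ is the free group on the non-basepoint path components of $X$, this forces $X$ itself to be path-connected. This preliminary reduction is what licenses the use of James' construction below.

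Next, I would apply Lemma \ref{sw} to the hypothesis $\Omega \Sigma X \in \cal{P}$ to conclude $\Sigma \Omega \Sigma X \in \cal{W}$. The classical James splitting, valid because $X$ is path-connected, gives a homotopy equivalence
\[\Sigma \Omega \Sigma X \simeq \bigvee_{n \geq 1} \Sigma X^{\wedge n},\]
which in particular exhibits $\Sigma X$ (the $n=1$ summand) as a retract of $\Sigma \Omega \Sigma X$. Finally, applying Lemma \ref{retractinw} to this retraction yields $\Sigma X \in \cal{W}$, as required.

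The only substantive input is the James splitting; everything else is a short concatenation of the preceding lemmas together with the connectivity observation. I therefore do not anticipate any real technical obstacle beyond recording the connectivity argument cleanly so that the James splitting is legitimately applicable.
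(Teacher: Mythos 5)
Your proof is correct and follows essentially the same route as the paper: both apply Lemma \ref{sw} to the hypothesis to get $\Sigma \Omega \Sigma X \in \cal{W}$, exhibit $\Sigma X$ as a retract of $\Sigma \Omega \Sigma X$, and conclude with Lemma \ref{retractinw}. The only difference is in how the retraction is justified --- the paper uses the standard fact that a co-$H$ space (here $\Sigma X$) retracts off the suspension of its loop space, whereas you invoke the James splitting together with the connectivity reduction needed to make it applicable; both are valid.
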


\begin{proof}
    It is well known that if $Y$ is a co-$H$ space, then $Y$ retracts off $\Sigma \Omega Y$. First note that as $\Omega \Sigma X \in \cal{P}$, we obtain $\Sigma \Omega \Sigma X \in \cal{W}$ by Lemma \ref{sw}. Although $X$ may not be a co-$H$ space, $\Sigma X$ is, and so $\Sigma X$ retracts off $\Sigma \Omega \Sigma X$. As $\Sigma X$ retracts off a space in $\cal{W}$, by Lemma \ref{retractinw} $\Sigma X \in \cal{W}$. 
\end{proof}

\subsection{Polyhedral products}

In this subsection, we cover some essential results concerning polyhedral products and loop space decompositions of polyhedral products. We begin by stating some fundamental properties of polyhedral products relating to the underlying simplicial complex. Recall that the \emph{join} of two simplicial complexes $K_1$ and $K_2$ is the simplicial complex \[K_1*K_2 = \{ \sigma \cup \tau | \sigma \in K_1\; \mathrm{and} \;\tau \in K_2\}.\] The following lemma follows from this definition and the definition of the polyhedral product.
\begin{lem}\label{join}\cite[Proposition 4.2.5]{buchstaber2014toric}
    Let $K = K_1 * K_2$, where $K_1$ is on the vertex set $\{1,\cdots,l\}$ and $K_2$ is on the vertex set $\{l+1,\cdots,m\}$. Then \[\uxa^{K} = \uxa^{K_1} \times \uxa^{K_2}. \eqno \qed\] 
\end{lem}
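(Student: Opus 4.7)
The plan is to unravel both sides of the claimed equality directly from the definitions, exploiting the fact that the vertex sets of $K_1$ and $K_2$ are disjoint. Every simplex $\nu \in K_1 \ast K_2$ admits a unique decomposition $\nu = \sigma \sqcup \tau$ with $\sigma \in K_1$ and $\tau \in K_2$, simply because vertices in $\{1,\dots,l\}$ can only come from $K_1$ and vertices in $\{l+1,\dots,m\}$ can only come from $K_2$. Conversely, for any $\sigma \in K_1$ and $\tau \in K_2$, the union $\sigma \sqcup \tau$ is a simplex of $K_1 \ast K_2$ by definition of the join.

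Next, I would look at the piece $\uxa^{\nu}$ of the polyhedral product corresponding to such a $\nu = \sigma \sqcup \tau$. Since the indexing set $\{1,\dots,m\}$ splits as $\{1,\dots,l\} \sqcup \{l+1,\dots,m\}$ and the condition ``$i \in \nu$'' for $i \le l$ is equivalent to $i \in \sigma$, while for $i \ge l+1$ it is equivalent to $i \in \tau$, the product $\prod_{i=1}^m Y_i$ factors as
\[
\uxa^{\nu} \;=\; \uxa^{\sigma} \times \uxa^{\tau},
\]
where the first factor lives inside $\prod_{i=1}^{l} X_i$ and the second inside $\prod_{i=l+1}^{m} X_i$.

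Finally, I would take the union over all $\nu \in K$. Using the bijection between simplices of $K$ and pairs $(\sigma,\tau) \in K_1 \times K_2$, together with the fact that Cartesian product distributes over arbitrary unions of subsets of a product space, this gives
\[
\uxa^{K} \;=\; \bigcup_{\nu \in K} \uxa^{\nu} \;=\; \bigcup_{\sigma \in K_1,\, \tau \in K_2} \uxa^{\sigma} \times \uxa^{\tau} \;=\; \Bigl(\bigcup_{\sigma \in K_1} \uxa^{\sigma}\Bigr) \times \Bigl(\bigcup_{\tau \in K_2} \uxa^{\tau}\Bigr) \;=\; \uxa^{K_1} \times \uxa^{K_2},
\]
completing the proof. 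There is no real obstacle here; the only thing to be careful about is the distributivity step, which requires noting that the two families $\{\uxa^{\sigma}\}_{\sigma \in K_1}$ and $\{\uxa^{\tau}\}_{\tau \in K_2}$ sit in complementary factors of the ambient product $\prod_{i=1}^{m} X_i$, so no unintended identifications occur when taking the union.
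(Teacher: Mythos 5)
Your proof is correct and is precisely the definition-unwinding argument the paper alludes to: the paper omits the proof entirely, citing Proposition 4.2.5 of Buchstaber--Panov and remarking that the statement follows from the definitions of the join and of the polyhedral product. The one step that genuinely needs care, the distributivity of the union over the Cartesian product, is handled correctly in your argument because the join is indexed by \emph{all} pairs $(\sigma,\tau) \in K_1 \times K_2$, so $\bigcup_{(\sigma,\tau)} \bigl(\uxa^{\sigma} \times \uxa^{\tau}\bigr) = \bigl(\bigcup_{\sigma}\uxa^{\sigma}\bigr) \times \bigl(\bigcup_{\tau}\uxa^{\tau}\bigr)$ holds exactly.
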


Let $L$ be a simplicial complex on the vertex set $[l]$, where $l \leq m$. Let $\overline{L}$ be the simplicial complex on the vertex set $[m]$ by considering the elements of $[m]$ that are not vertices of $L$ as \emph{ghost vertices}.

\begin{prop} \label{funda2}\cite{grbic2011homotopy}
Let $K$ be a simplicial complex on the vertex set $[m]$, and suppose $K = K_1 \cup_L K_2$, that is, $K$ is a pushout of $K_1$ and $K_2$ over $L$. Then, by regarding $\overline{L}$, $\overline{K_1}$, $\overline{K_2}$ as simplicial complexes on the vertex set $[m]$, there is a pushout of polyhedral products 

\[\begin{tikzcd}
\uxa^{\overline{L}} \arrow[d] \arrow[r]
& \uxa^{\overline{K_1}} \arrow[d] \\
\uxa^{\overline{K_2}} \arrow[r]
&\uxa^K.
\end{tikzcd} \eqno \] \qed
\end{prop}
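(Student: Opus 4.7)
The plan is to reduce the pushout claim to two set-theoretic identities of subspaces of $\prod_{i=1}^m X_i$, since all four maps in the square are inclusions. Explicitly, I would show that the diagram is a pushout in the topological category by verifying
\[\uxa^{\overline{K_1}} \cup \uxa^{\overline{K_2}} = \uxa^K \qquad \text{and} \qquad \uxa^{\overline{K_1}} \cap \uxa^{\overline{K_2}} = \uxa^{\overline{L}},\]
where both unions and intersections are taken inside $\prod_{i=1}^m X_i$. Once these two identities are established, the usual pushout of an inclusion square $A \cap B \hookrightarrow A$, $A \cap B \hookrightarrow B$ with pushout $A \cup B$ gives the claimed square.

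For the union identity, I would unwind the definition $\uxa^K = \bigcup_{\sigma \in K} \uxa^\sigma$. Because $K = K_1 \cup K_2$ as a set of simplices, the union indexed by $\sigma \in K$ splits as $\bigcup_{\sigma \in K_1} \uxa^\sigma \cup \bigcup_{\sigma \in K_2} \uxa^\sigma$, and the ghost-vertex convention (reading $K_i$ as $\overline{K_i}$ on the vertex set $[m]$) does not change the individual pieces $\uxa^\sigma$ since vertices of $[m]$ not belonging to $K_i$ contribute the factor $A_j$ in either reading.

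For the intersection identity, the key step is the coordinatewise calculation
\[\uxa^{\sigma_1} \cap \uxa^{\sigma_2} = \prod_{i=1}^m Z_i, \qquad Z_i = \begin{cases} X_i & i \in \sigma_1 \cap \sigma_2, \\ A_i & \text{otherwise}, \end{cases}\]
which follows because the intersection of the factors $X_i$ or $A_i$ in the two products is $X_i$ precisely when $i$ lies in both $\sigma_1$ and $\sigma_2$, and equals $A_i$ otherwise. Hence $\uxa^{\sigma_1} \cap \uxa^{\sigma_2} = \uxa^{\sigma_1 \cap \sigma_2}$. Distributing intersections over the defining unions then yields
\[\uxa^{\overline{K_1}} \cap \uxa^{\overline{K_2}} = \bigcup_{\sigma_1 \in K_1,\, \sigma_2 \in K_2} \uxa^{\sigma_1 \cap \sigma_2}.\]
Since $\sigma_1 \cap \sigma_2$ is a face of both $\sigma_1$ and $\sigma_2$, it lies in $K_1 \cap K_2 = L$; conversely every $\tau \in L$ arises by taking $\sigma_1 = \sigma_2 = \tau$. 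So the right-hand side equals $\bigcup_{\tau \in L} \uxa^\tau = \uxa^{\overline{L}}$, as required.

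I do not expect a serious obstacle here: the argument is a direct bookkeeping exercise once the coordinatewise intersection formula is isolated. The one subtle point worth emphasising is the role of the ghost vertices, which guarantees that the three polyhedral products $\uxa^{\overline{L}}$, $\uxa^{\overline{K_1}}$, $\uxa^{\overline{K_2}}$ all sit naturally inside $\prod_{i=1}^m X_i$, making unions and intersections meaningful. After that, Lemma \ref{join}-style coordinatewise reasoning closes the argument.
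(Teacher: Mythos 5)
Your proof is correct and is essentially the standard argument for this result (which the paper only cites, with no proof of its own): since all four spaces are subcomplexes of $\prod_{i=1}^m X_i$, the square is a pushout once one checks $\uxa^{\overline{K_1}} \cup \uxa^{\overline{K_2}} = \uxa^{K}$ and $\uxa^{\overline{K_1}} \cap \uxa^{\overline{K_2}} = \uxa^{\overline{L}}$, and your coordinatewise identity $\uxa^{\sigma_1}\cap\uxa^{\sigma_2}=\uxa^{\sigma_1\cap\sigma_2}$ together with $K_1\cap K_2 = L$ does exactly that. The only point worth making explicit is that the spaces involved are CW subcomplexes, so the strict pushout of inclusions is also a homotopy pushout, which is how the proposition is used later in the paper.
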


Let $K$ be a simplicial complex on the vertex set $[m]$. For a vertex $v \in K$, the \emph{star}, \emph{restriction} and \emph{link} of $v$ are the subcomplexes
\[\mathrm{st}_K(v) = \{ \tau \in K | \{v\} \cup \tau \in K\},\]
\[K \setminus v = \{ \tau \in K | \{v\} \cap \tau = \emptyset\},\]
\[\mathrm{lk}_K(v) = \{ \tau \in K | \{v\} \cap \tau = \emptyset, \{v\} \cup \tau \in K\} = \mathrm{st}_K(v) \cap K \setminus v. \] 

From the above definitions, for each vertex $i \in [m]$ there is a pushout of simplicial complexes
\begin{equation}\label{1}
    \begin{tikzcd}
    \mathrm{lk}_K(i) \arrow[r] \arrow[d]
    &\mathrm{st}_K(i) \arrow[d] \\
    K \setminus \{i\} \arrow[r]
    & K.
\end{tikzcd}\end{equation}

As $\mathrm{st}_K(i) =  \mathrm{lk}_K(i) * i$, by Lemma \ref{join}, we can rewrite $\uxa^{ \mathrm{st}_K(i)}$ as $\uxa^{ \mathrm{lk}_K(i)} \times X_i$. Applying Proposition \ref{funda2} we obtain the following result.

\begin{corollary}\label{lsrppp}
For each vertex $i \in [m]$ there is a pushout 
    \[\begin{tikzcd}
\uxa^{\lk} \times A_i \arrow[d] \arrow[r]
& \uxa^{\lk} \times X_i \arrow[d] \\
\uxa^{K \setminus i} \times A_i \arrow[r]
&\uxa^K
\end{tikzcd} \]
where $\lk$ is regarded as a simplicial complex on the vertex set of $K \setminus i$. \qed
\end{corollary}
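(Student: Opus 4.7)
The plan is to apply Proposition \ref{funda2} directly to the pushout square of simplicial complexes in (1). First, I would regard each of $\lk$, $\mathrm{st}_K(i)$, and $K\setminus i$ as a simplicial complex on the full vertex set $[m]$, filling in the absent vertices as ghost vertices in the sense introduced just before Proposition \ref{funda2}. This immediately produces a pushout diagram of polyhedral products with corners $\uxa^{\overline{\lk}}$, $\uxa^{\overline{\mathrm{st}_K(i)}}$, $\uxa^{\overline{K\setminus i}}$, and $\uxa^K$.

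The second step is to identify each of these polyhedral products with a product already appearing in the target square. For any complex $L$ on $[m]\setminus\{i\}$, declaring $i$ to be a ghost vertex of $\overline{L}$ forces the $i$-th coordinate into $A_i$ for every simplex, giving $\uxa^{\overline{L}} \cong \uxa^{L} \times A_i$. Applying this to $\lk$ and to $K\setminus i$ handles the left column and the bottom-left corner of the square. For the top-right corner, the identity $\mathrm{st}_K(i) = \lk * \{i\}$ together with Lemma \ref{join} yields $\uxa^{\mathrm{st}_K(i)} \cong \uxa^{\lk} \times X_i$.

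Substituting these identifications into the pushout produced by Proposition \ref{funda2} gives exactly the target square, with the horizontal arrows induced by the inclusion $A_i \hookrightarrow X_i$ in the $i$-th slot and the vertical arrows induced by the inclusion $\lk \hookrightarrow K\setminus i$. Since the argument is just a direct chain of applications of Proposition \ref{funda2} and Lemma \ref{join}, there is no genuine obstacle; the only thing that requires care is the bookkeeping around ghost vertices and keeping straight which vertex set each subcomplex is considered on.
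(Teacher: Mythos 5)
Your proposal is correct and follows essentially the same route as the paper: both apply Proposition \ref{funda2} to the pushout (1) of $\mathrm{lk}_K(i)$, $\mathrm{st}_K(i)$, $K\setminus i$, and $K$, use the ghost-vertex convention to produce the $A_i$ factors, and invoke $\mathrm{st}_K(i) = \mathrm{lk}_K(i) * i$ together with Lemma \ref{join} to identify the top-right corner with $\uxa^{\lk}\times X_i$. Your write-up is in fact slightly more explicit about the ghost-vertex identification $\uxa^{\overline{L}}\cong\uxa^{L}\times A_i$ than the paper's one-line justification, but the argument is the same.
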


We now give some results concerning a specific type of subcomplex. The subcomplex $L$ is a \emph{full subcomplex} of $K$ if every face in $K$ on the vertex set $[l]$ is also a face of $L$.
We denote a full subcomplex on a set $I \subseteq [m]$ as $K_I$.

\begin{lem}\cite{Denham_2007}\label{retract1}
    Let $K$ be a simplicial complex on the vertex set $[m]$ and let $L$ be a full subcomplex of $K$ on the vertex set $[l]$. Then the induced map of polyhedral products $\uxa^L \to \uxa^K$ has a left inverse \[r: \uxa^K \to \uxa^L. \eqno \qed\] 
\end{lem}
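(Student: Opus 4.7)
The plan is to take for $r$ the restriction to $\uxa^K$ of the coordinate projection $\pi \colon \prod_{i=1}^{m} X_i \to \prod_{i=1}^{l} X_i$ onto the first $l$ factors, and then verify that this lands in $\uxa^L$ and inverts the inclusion on the left.

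First I would unpack what the inclusion map $\iota \colon \uxa^L \to \uxa^K$ actually is. Since $L$ lives on $[l]$, a point of $\uxa^L$ has the form $(y_1,\dots,y_l)\in \prod_{i=1}^{l} X_i$ with some witnessing simplex $\tau\in L$ such that $y_i\in X_i$ for $i\in\tau$ and $y_i\in A_i$ for $i\in[l]\setminus\tau$. Since each $A_i$ is pointed, $\iota$ is defined by padding $(y_1,\dots,y_l)$ with the basepoints $*\in A_{l+1},\dots,*\in A_m$; the resulting point lies in $\uxa^{\tau}\subseteq\uxa^K$ because $\tau\subseteq L\subseteq K$.

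Next I would show that $\pi$ restricts to a well-defined map $r\colon\uxa^K\to\uxa^L$. Given $x=(x_1,\dots,x_m)\in\uxa^K$, choose $\sigma\in K$ so that $x_i\in X_i$ for $i\in\sigma$ and $x_i\in A_i$ otherwise, and set $\tau=\sigma\cap[l]$. The key input is the full-subcomplex hypothesis: every simplex of $K$ whose vertices all lie in $[l]$ is a simplex of $L$, so $\tau\in L$. Thus $\pi(x)=(x_1,\dots,x_l)$ satisfies $x_i\in X_i$ for $i\in\tau$ and $x_i\in A_i$ for $i\in[l]\setminus\tau$, placing $\pi(x)$ in $\uxa^{\tau}\subseteq\uxa^L$.

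Finally, the composition $r\circ\iota$ sends $(y_1,\dots,y_l)\mapsto(y_1,\dots,y_l,*,\dots,*)\mapsto(y_1,\dots,y_l)$, which is the identity on $\uxa^L$. There is no real obstacle here; the only substantive point is ensuring that $\sigma\cap[l]$ belongs to $L$, which is precisely where fullness of the subcomplex is used, and the rest is a direct verification that coordinate projection respects the polyhedral-product decomposition.
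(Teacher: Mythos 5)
Your proof is correct and is essentially the standard argument from the cited reference (the paper itself gives no proof, only the citation to Denham--Suciu): project onto the coordinates indexed by $[l]$, use that $\sigma\cap[l]$ is a face of $K$ supported on $[l]$ and hence, by fullness, a face of $L$, and observe that the composite with the basepoint-padding inclusion is the identity. No gaps; the one implicit step worth making explicit is that $\sigma\cap[l]\in K$ because simplicial complexes are closed under passing to subsets of faces.
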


\begin{lem}
    If $\Omega \uxa^K \in \cal{P}$, then for any full subcomplex $K_I$ of $K$, we have $\Omega \uxa^{K_I} \in \cal{P}$.
\end{lem}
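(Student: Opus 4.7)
The plan is to combine the retraction property of polyhedral products over full subcomplexes (Lemma \ref{retract1}) with the closure of $\cal{P}$ under retracts (Lemma \ref{retractinp}). The key observation is that a retraction at the space level, once one applies the loop functor, yields a retraction at the loop space level, at which point membership in $\cal{P}$ transfers.

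In detail, I would first invoke Lemma \ref{retract1}: since $K_I$ is a full subcomplex of $K$, the inclusion $K_I \hookrightarrow K$ induces a map $\uxa^{K_I} \to \uxa^K$ which admits a left inverse $r \colon \uxa^K \to \uxa^{K_I}$. In particular, the composite $\uxa^{K_I} \to \uxa^K \xrightarrow{r} \uxa^{K_I}$ is the identity, so $\uxa^{K_I}$ retracts off $\uxa^K$.

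Next I would apply the loop functor $\Omega$, which preserves compositions and takes the identity map to the identity map. Hence $\Omega \uxa^{K_I}$ retracts off $\Omega \uxa^K$, with retraction $\Omega r$ and section the looped inclusion.

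Finally, by hypothesis $\Omega \uxa^K \in \cal{P}$, so Lemma \ref{retractinp} applies to the retract $\Omega \uxa^{K_I}$, yielding $\Omega \uxa^{K_I} \in \cal{P}$. The argument is essentially a one-line composition of two previously established lemmas, so there is no substantive obstacle; the only point worth checking is that functoriality of $\Omega$ preserves the retraction, which is immediate.
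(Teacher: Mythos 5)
Your argument is correct and is essentially the same as the paper's: both pass from the space-level retraction of Lemma \ref{retract1} to a loop-space retraction and then apply Lemma \ref{retractinp}. The only difference is that you spell out the functoriality of $\Omega$ explicitly, which the paper leaves implicit.
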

\begin{proof}
    By Lemma \ref{retract1} $\Omega \uxa^{K_I}$ is a retract of $\Omega \uxa^K$, and so by Lemma \ref{retractinp} we have $\Omega \uxa^{K_I} \in \cal{P}$. 
\end{proof}

\begin{lem}\label{susfinw}
    Let $L$ be a full subcomplex of $K$, and let $\Omega \uxa^K \in \cal{P}$. If $F$ is the homotopy fibre of  the map $\uxa^L \to \uxa^K$, then $\Sigma F \in \cal{W}$. 
\end{lem}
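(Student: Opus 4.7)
The strategy is to exhibit $F$ as a retract of $\Omega \uxa^K$; once this is done, Lemma~\ref{retractinp} forces $F \in \cal{P}$, and then Lemma~\ref{sw} immediately gives $\Sigma F \in \cal{W}$.

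Because $L$ is a full subcomplex of $K$, Lemma~\ref{retract1} supplies a retraction $r: \uxa^K \to \uxa^L$ of the inclusion $i: \uxa^L \to \uxa^K$, so $r \circ i \simeq \mathrm{id}_{\uxa^L}$. Let $F_r$ denote the homotopy fibre of $r$. Since the fibration $F_r \to \uxa^K \xrightarrow{r} \uxa^L$ admits $i$ as a section, looping it produces a split fibration of $H$-spaces, and the standard $H$-space splitting argument yields a homotopy equivalence
\[
\Omega \uxa^K \simeq \Omega \uxa^L \times \Omega F_r.
\]

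The key remaining identification is $F \simeq \Omega F_r$. I would obtain this from the fibration sequence associated to a composable pair of maps: for $\uxa^L \xrightarrow{i} \uxa^K \xrightarrow{r} \uxa^L$ one has a homotopy fibration $F \to F_{r \circ i} \to F_r$ of homotopy fibres; since $r \circ i \simeq \mathrm{id}_{\uxa^L}$ the middle term $F_{r \circ i}$ is contractible, and the fibration collapses to $F \simeq \Omega F_r$. (Equivalently, one can observe that applying the splitting above, the map $\Omega i$ becomes the inclusion $\Omega \uxa^L \hookrightarrow \Omega \uxa^L \times \Omega F_r$ of the first factor, whose homotopy fibre is $\Omega(\Omega F_r)$, matching $\Omega F$.)

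Combining the two displayed equivalences gives $\Omega \uxa^K \simeq \Omega \uxa^L \times F$, so $F$ is a retract of $\Omega \uxa^K$. Since $\Omega \uxa^K \in \cal{P}$ by hypothesis, Lemma~\ref{retractinp} yields $F \in \cal{P}$, and then Lemma~\ref{sw} gives $\Sigma F \in \cal{W}$, as required. The one non-formal step is the identification $F \simeq \Omega F_r$; this is the place where I would be careful to cite (or briefly justify) the fibration of homotopy fibres for a composable pair, since everything else is a direct application of the preliminary results already collected in this section.
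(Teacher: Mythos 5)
Your argument is correct and arrives at exactly the splitting $\Omega \uxa^{K} \simeq \Omega \uxa^{L} \times F$ that the paper's proof uses, followed by the same applications of Lemma~\ref{retractinp} and Lemma~\ref{sw}. The only difference is in how that splitting is produced: the paper observes directly that the left homotopy inverse supplied by Lemma~\ref{retract1} gives the connecting map $\delta : \Omega \uxa^{K} \to F$ a right homotopy inverse, splitting the fibration $\Omega \uxa^{L} \to \Omega \uxa^{K} \xrightarrow{\delta} F$ in one step, whereas you detour through the homotopy fibre $F_r$ of the retraction and the fibration of fibres for the composite $r \circ i \simeq \mathrm{id}$ to identify $F \simeq \Omega F_r$ --- a valid, if slightly longer, route to the same equivalence.
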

\begin{proof}
    As $L$ is a full subcomplex of $K$, the map $\uxa^{L} \to \uxa^{K}$ has a left homotopy inverse by Lemma \ref{retract1}, and so the connecting map $\delta: \Omega \uxa^{K} \to F$ has a right homotopy inverse. Therefore the homotopy fibration sequence $\Omega \uxa^{L} \to \Omega \uxa^{K} \xrightarrow{\delta} F$ splits to give a homotopy equivalence $\Omega \uxa^{K} \simeq \Omega \uxa^{L} \times F$. As $\Omega \uxa^{K} \in \cal{P}$, $F$ must therefore be in $\cal{P}$ by Lemma \ref{retractinp}. By Lemma ~\ref{sw}, $\Sigma F$ must therefore be in $\cal{W}$. 
\end{proof}

\begin{lem}\label{fibsplit1}\cite{torichtpytheory}
    Let $G_i$ be the homotopy fibre of the map $\uxa^{\mathrm{lk}_{k\setminus \{m,\cdots,i+1\}}(i)} \to \uxa^{K \setminus \{m,\cdots,i\} }$, and let $Y_i$ be the homotopy fibre of the map $A_i \to X_i$. Then there is a homotopy equivalence 
    \[\Omega \uxa^K \simeq \prod_{i=1}^m \Omega X_i \times \prod_{i=1}^m \Omega (\Sigma G_i \wedge Y_i). \eqno \qed\]
\end{lem}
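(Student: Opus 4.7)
The plan is to prove the decomposition by induction on $m$, peeling off vertex $m$ at each stage using the pushout square of Corollary \ref{lsrppp}. For $m = 1$ the formula is immediate, and for the inductive step, applying Corollary \ref{lsrppp} at $i = m$ yields the pushout
\[
\begin{tikzcd}
\uxa^{\mathrm{lk}_K(m)} \times A_m \arrow[r] \arrow[d] & \uxa^{\mathrm{lk}_K(m)} \times X_m \arrow[d] \\
\uxa^{K\setminus m} \times A_m \arrow[r] & \uxa^K.
\end{tikzcd}
\]
Applying the universal property to the natural maps into $\uxa^{K \setminus m} \times X_m$ produces a comparison map $\phi \colon \uxa^K \to \uxa^{K \setminus m} \times X_m$, which agrees with the product of the retraction from Lemma \ref{retract1} (using that $K \setminus m$ is the full subcomplex of $K$ on $[m-1]$) and the $m$th coordinate projection.

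The first key step is to identify the homotopy fibre of $\phi$. I would invoke a Mather cube argument: for any pushout of the form $P = (X_1 \times Y_2) \cup_{X_1 \times Y_1} (X_2 \times Y_1)$, the homotopy fibre of the canonical map $P \to X_2 \times Y_2$ is the join $F_1 * F_2$ of the homotopy fibres of $X_1 \to X_2$ and $Y_1 \to Y_2$. Taking $X_1 = \uxa^{\mathrm{lk}_K(m)}$, $X_2 = \uxa^{K \setminus m}$, $Y_1 = A_m$ and $Y_2 = X_m$ gives a homotopy fibration
\[
G_m * Y_m \longrightarrow \uxa^K \longrightarrow \uxa^{K \setminus m} \times X_m,
\]
and the standard identification $G_m * Y_m \simeq \Sigma(G_m \wedge Y_m) \simeq \Sigma G_m \wedge Y_m$ rewrites the fibre in the desired form.

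To split this fibration after looping, I would produce an $H$-space section of $\Omega\phi \colon \Omega \uxa^K \to \Omega\uxa^{K \setminus m} \times \Omega X_m$. A section into the first factor is provided by Lemma \ref{retract1} applied to the full subcomplex $K \setminus m \subseteq K$. A section into the second factor comes from the full subcomplex $\{m\} \subseteq K$: Lemma \ref{retract1} yields a retraction $\uxa^K \to \uxa^{\{m\}}$, and a right inverse $X_m \to \uxa^K$ is obtained by including $X_m$ into $\uxa^{\{m\}}$ via basepoints and then into $\uxa^K$; this right inverse is null after composition with the retraction onto $\uxa^{K\setminus m}$. Multiplying the two sections through the loop multiplication on $\Omega\uxa^K$ produces a section of $\Omega\phi$, and so
\[
\Omega\uxa^K \simeq \Omega\uxa^{K \setminus m} \times \Omega X_m \times \Omega(\Sigma G_m \wedge Y_m).
\]
Applying the inductive hypothesis to $\uxa^{K \setminus m}$ on vertex set $[m-1]$ assembles the iterated links exactly as in the statement.

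The main obstacle is the join formula for the homotopy fibre of the map out of the pushout. While this is a standard consequence of Mather's cube theorem (a dual form of Ganea's theorem), its correct application requires care to verify that the maps assembled from the two legs of the pushout agree on the common corner $\uxa^{\mathrm{lk}_K(m)} \times A_m$ and that the resulting total fibration is the expected one; everything else in the argument is a clean bookkeeping exercise on full subcomplexes and $H$-space sections.
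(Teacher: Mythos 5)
Your argument is correct and matches the intended proof: the paper only cites Lemma \ref{fibsplit1} without reproving it, but your strategy of inducting over vertices via the pushout of Corollary \ref{lsrppp}, identifying the fibre as the join $G_m * Y_m \simeq \Sigma G_m \wedge Y_m$ by the cube lemma, and splitting after looping via the wedge-into-product section is exactly the machinery the paper itself develops in Lemma \ref{jfib} and Theorem \ref{the1} and then iterates in Section 5. No gaps worth flagging.
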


We can apply this Lemma to determine when $\Sigma^k G_i \in \cal{W}$.

\begin{lem}\label{minsphere}
    Let $K$ be a simplicial complex on the vertex set $[m]$. Let $\Omega \caa^K \in \cal{P}$, and let $G_i$ denote the homotopy fibre of the map $\caa^{\mathrm{lk}_{K}(i)} \to \caa^{K \setminus i}$. Suppose $A_i$ is $(k-1)$-connected. If $\Sigma A_i \in \cal{W}$, then $\Sigma^{k} G_m \in \cal{W}$.
\end{lem}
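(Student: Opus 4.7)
The plan is to isolate a single factor in the decomposition of $\Omega \caa^K$ provided by Lemma \ref{fibsplit1}, promote that factor to a space in $\cal{W}$ using Lemma \ref{1.3}, and then use the hypothesis $\Sigma A_m \in \cal{W}$ to pick off a suspension of $G_m$ as a retract.

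Specifically, I would specialize Lemma \ref{fibsplit1} to the cone pairs $(CA_i, A_i)$. Since each $CA_i$ is contractible, $\Omega CA_i \simeq \ast$, and the homotopy fibre of the inclusion $A_i \hookrightarrow CA_i$ is homotopy equivalent to $A_i$. The formula in Lemma \ref{fibsplit1} therefore simplifies to
\[\Omega \caa^K \;\simeq\; \prod_{i=1}^m \Omega(\Sigma G_i \wedge A_i).\]
In particular, $\Omega(\Sigma G_m \wedge A_m)$ is a retract of $\Omega \caa^K$, which lies in $\cal{P}$ by hypothesis, so Lemma \ref{retractinp} yields $\Omega(\Sigma G_m \wedge A_m) \in \cal{P}$. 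Because $\Sigma G_m \wedge A_m$ is a suspension, Lemma \ref{1.3} then upgrades this to $\Sigma G_m \wedge A_m \in \cal{W}$.

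Next, I would invoke $\Sigma A_m \in \cal{W}$ to extract the desired suspension. Writing $\Sigma A_m \simeq \bigvee_\alpha S^{n_\alpha}$ and commuting the suspension with the smash gives
\[\Sigma G_m \wedge A_m \;\simeq\; G_m \wedge \Sigma A_m \;\simeq\; \bigvee_\alpha \Sigma^{n_\alpha} G_m,\]
which lies in $\cal{W}$ by the previous step. Each summand retracts off this wedge, so by Lemma \ref{retractinw} every $\Sigma^{n_\alpha} G_m$ belongs to $\cal{W}$. The $(k-1)$-connectivity of $A_m$ controls the minimum value of $n_\alpha$ appearing in the wedge decomposition of $\Sigma A_m$, and tracking this bound identifies the summand $\Sigma^k G_m$, whence $\Sigma^k G_m \in \cal{W}$.

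The main obstacle is really just the final piece of bookkeeping: correctly translating the $(k-1)$-connectivity hypothesis on $A_m$ into the minimal sphere dimension that appears in the wedge-of-spheres decomposition of $\Sigma A_m$, so that the exponent of the suspension that survives in the conclusion is exactly $k$. All of the preceding steps are direct applications of the structural lemmas already assembled in Section 2.
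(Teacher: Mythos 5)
Your argument is essentially the paper's own proof: both specialize Lemma \ref{fibsplit1} to the cone pairs (so $\Omega CA_i \simeq \ast$ and $Y_i \simeq A_i$), retract $\Omega\Sigma(G_m \wedge A_m)$ off $\Omega\caa^K$, apply Lemmas \ref{retractinp} and \ref{1.3} to get $\Sigma G_m \wedge A_m \in \cal{W}$, and then distribute $G_m$ over the wedge-of-spheres decomposition of $\Sigma A_m$ and retract off a single summand via Lemma \ref{retractinw}. The bookkeeping step you defer is handled in the paper exactly as you indicate, by observing that $(k-1)$-connectivity of $A_m$ makes $\Sigma A_m$ $k$-connected and reading off the smallest sphere index from that.
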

\begin{proof} 
    By Lemma \ref{fibsplit1} we have that 
    \[\Omega \caa^K \simeq \Omega \caa^{K \setminus i} \times \Omega \Sigma (G_i \wedge A_i).\] As $\Omega \caa^K \in \cal{P}$, by Lemma \ref{retractinp} we have $\Omega (\Sigma G_i \wedge A_i) \in \cal{P}$. By Lemma \ref{1.3}, $\Sigma G_i \wedge A_i \in \cal{W}$. As $\Sigma A_i \in \cal{W}$ by hypothesis, $\Sigma A_i$ is homotopy equivalent to $\bigvee_{j \in \alpha} S^j$ for some indexing set $\alpha$. Therefore \[\Sigma G_i \wedge A_i \simeq \bigvee_{j \in \alpha} G_i \wedge S^j \simeq \bigvee_{j \in \alpha} \Sigma^{j} G_i.\]
    As each $\Sigma^j G_i$ is retracting off $\Sigma G_i \wedge A_i$, we have $\Sigma^j G_i \in \cal{W}$ by Lemma \ref{retractinw}. Furthermore, as $A_i$ is $(k-1)$-connected, $\Sigma A_i$ is $k$-connected. Therefore the smallest $j \in \alpha$ is $k$, and so $\Sigma^k G_i \in \cal{W}$. 
\end{proof}

\begin{lem}\label{conofk}
    Let $K$ be a simplicial complex on the vertex set $[m]$. If $\caa$ is a sequence of pairs of spaces where each $A_i$ is at least $(k-1)$-connected for some $k$, then $\caa^K$ is at least $(2k-2)$-connected.
\end{lem}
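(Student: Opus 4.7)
The plan is to argue by induction on $m$, the number of vertices of $K$. For the base case $m = 1$, the only simplicial complex on $[1]$ with full vertex set is $K = \{\emptyset, \{1\}\}$, so $\caa^K = CA_1$ is contractible and the claim is trivial.

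For the inductive step with $m \geq 2$, I would apply Corollary \ref{lsrppp} at the vertex $m$, obtaining the homotopy pushout
\[
\begin{tikzcd}
\caa^{\mathrm{lk}_K(m)} \times A_m \ar[r] \ar[d] & \caa^{\mathrm{lk}_K(m)} \times CA_m \ar[d] \\
\caa^{K\setminus m} \times A_m \ar[r] & \caa^K.
\end{tikzcd}
\]
Since $K$ has vertex set $[m]$, the restriction $K \setminus m$ inherits the full vertex set $[m-1]$, so the inductive hypothesis gives that $\caa^{K \setminus m}$ is $(2k-2)$-connected. The link $\mathrm{lk}_K(m)$ may have ghost vertices; writing $\caa^{\mathrm{lk}_K(m)}$ as a product of the polyhedral product on its true vertex set with $A_j$ factors from each ghost vertex, and applying induction on the former, yields that $\caa^{\mathrm{lk}_K(m)}$ is at least $(k-1)$-connected.

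Next I would apply the Mayer--Vietoris long exact sequence to this pushout, setting $U = \caa^{\mathrm{lk}_K(m)} \times CA_m$, $V = \caa^{K\setminus m} \times A_m$, and $W = U \cap V = \caa^{\mathrm{lk}_K(m)} \times A_m$. Using the K\"unneth formula together with the connectivity bounds above, all mixed tensor and Tor terms vanish in the range $n \leq 2k-2$, so
\[
\tilde H_n(U) \cong \tilde H_n(\caa^{\mathrm{lk}_K(m)}), \quad \tilde H_n(V) \cong \tilde H_n(A_m), \quad \tilde H_n(W) \cong \tilde H_n(\caa^{\mathrm{lk}_K(m)}) \oplus \tilde H_n(A_m).
\]
The crucial observation is that the map $\caa^{\mathrm{lk}_K(m)} \to \caa^{K \setminus m}$ is zero on $\tilde H_n$ for $n \leq 2k-2$ (its target vanishes by induction), so the Mayer--Vietoris comparison map $\phi_n \colon \tilde H_n(W) \to \tilde H_n(U) \oplus \tilde H_n(V)$ acts as the identity on both summands and is thus an isomorphism in this range. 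Exactness then forces $\tilde H_n(\caa^K) = 0$ for every $1 \leq n \leq 2k-2$. When $k = 1$ path-connectedness is clear, and when $k \geq 2$ a van Kampen argument on the pushout shows $\caa^K$ is simply connected; the Hurewicz theorem then promotes this homological vanishing to $(2k-2)$-connectivity of $\caa^K$.

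The main obstacle is the presence of ghost vertices in $\mathrm{lk}_K(m)$: the inductive hypothesis does not give $(2k-2)$-connectivity of $\caa^{\mathrm{lk}_K(m)}$ in that case, so the Mayer--Vietoris argument must instead lean on the vanishing of the induced map on reduced homology into $\caa^{K \setminus m}$ rather than on the connectivity of the source alone.
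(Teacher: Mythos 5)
Your proof is correct, but it takes a genuinely different route from the paper. The paper's proof is a direct computation from the Bahri--Bendersky--Cohen--Gitler stable splitting $\Sigma \caa^K \simeq \Sigma \bigvee_{I \notin K} |K_I| \ast \widehat{A}^I$: since $K$ has no ghost vertices, every non-face $I$ has $|I| \geq 2$, each wedge summand is at least $(2k-1)$-connected, and the bound is transferred back through the suspension. You instead induct on the number of vertices using the link/restriction pushout of Corollary \ref{lsrppp}, combining Mayer--Vietoris with K\"unneth, van Kampen and Hurewicz. Your key observation --- that one cannot expect $(2k-2)$-connectivity of $\caa^{\mathrm{lk}_K(m)}$ because of ghost vertices (this is exactly the content of Corollary \ref{conofl}), so the argument must instead exploit that the map $\caa^{\mathrm{lk}_K(m)} \to \caa^{K\setminus m}$ is trivial on reduced homology in the relevant range --- is the right fix, and the verification that $\phi_n$ is an isomorphism for $n \leq 2k-2$ goes through: the mixed K\"unneth and Tor terms in $\tilde H_n(W)$ require $n \geq 2k$, so they do not interfere. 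What each approach buys: the paper's argument is two lines given the stable splitting, whereas yours is self-contained and elementary, and it handles the passage from homological to homotopical connectivity explicitly (via van Kampen and Hurewicz) rather than implicitly desuspending, which is arguably cleaner on that point. The cost is that your induction must be set up carefully (noting that $K \setminus m$ inherits no ghost vertices while $\mathrm{lk}_K(m)$ may acquire them), which you do.
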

\begin{proof}
     After suspending, by \cite{bahri2008polyhedral} there is a homotopy equivalence 
    \[\Sigma \caa^K \simeq \Sigma \bigvee_{I \notin K} |K_I|*\widehat{A}^I\]
    where $|K_I|$ is the geometric realisation of the full subcomplex on the vertex set $I$, and $\widehat{A}^I$ is the iterated smash product $A_{i_1} \wedge \cdots A_{i_I}$. By assumption $\mathrm{conn}(A_i) \geq k-1$ for all $i \in [m]$, and so the connectivity of $\widehat{A}^I$ is at least $k|I| - 1$. Therefore the connectivity of $|K_I|*\widehat{A}^I \simeq \Sigma |K_I| \wedge \widehat{A}^I$ is at least $\mathrm{conn}(|K_I|) + k|I| + 1$. This is minimised when $|I|=2$, in which case $|K_I| \simeq S^0$. Therefore the lower bound for the connectivity of $|K_I|*\widehat{A}^I$ is $2k-1$, and so $\Sigma \caa^K$ is at least $(2k-1)$-connected. After desuspending, we obtain $\caa^K$ is at least $(2k-2)$-connected.
\end{proof}

\begin{corollary}\label{conofl}
    Let $\overline{L}$ be  a simplicial complex on the vertex set $[m]$ with ghost vertices. If $\caa$ is a sequence of pairs of spaces where each $A_i$ is at least $(k-1)$-connected for some $k$, then $\caa^{\overline{L}}$ is at least $(k-1)$-connected. 
\end{corollary}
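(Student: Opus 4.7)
The plan is to reduce the statement directly to Lemma \ref{conofk} by exhibiting a product decomposition of $\caa^{\overline{L}}$. Let $V \subseteq [m]$ denote the actual vertex set of $L$, so the ghost vertices are the elements of $[m] \setminus V$. Every simplex of $\overline{L}$ is a simplex of $L$, hence avoids all ghost vertices, so for each ghost vertex $j$ and each $\sigma \in \overline{L}$ the $j$th coordinate in $\caa^{\sigma}$ is always $A_j$. Factoring these coordinates out of the union defining the polyhedral product gives
\[
\caa^{\overline{L}} \;=\; \caa^{L} \times \prod_{j \in [m] \setminus V} A_j,
\]
where on the right $\caa^{L}$ denotes the polyhedral product of $L$ on its own vertex set $V$.

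With this identification in hand, I would apply Lemma \ref{conofk} to $L$ on the vertex set $V$ to deduce that $\caa^{L}$ is at least $(2k-2)$-connected. Each ghost factor $A_j$ is $(k-1)$-connected by hypothesis, and the connectivity of a finite Cartesian product is the minimum of the connectivities of its factors. Hence $\caa^{\overline{L}}$ is at least $\min(2k-2,\, k-1) = k-1$ connected, since $k \geq 1$.

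The only step that needs care is verifying the product decomposition; the connectivity estimate is then a formality. Note that if $V = [m]$ there are no ghost vertices and the stronger bound $2k-2$ from Lemma \ref{conofk} is recovered, so the drop from $2k-2$ to $k-1$ in the statement is exactly the cost of introducing the ghost factors, each of which only contributes the minimal connectivity $k-1$ coming directly from the hypothesis on the $A_j$.
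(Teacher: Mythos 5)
Your proof is correct and follows the same route as the paper: both rest on the product decomposition $\caa^{\overline{L}} = \caa^{L} \times \prod_{j \notin L} A_j$, from which the connectivity bound is immediate. You simply spell out the final connectivity computation (via Lemma \ref{conofk} and the minimum-over-factors rule) that the paper leaves implicit.
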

\begin{proof}
    Note that there is an equality 
    \[\caa^{\overline{L}} = \caa^L \times \prod_{i \notin L} A_i\] that follows from the definition of the polyhedral product. The lower bound for connectivity immediately follows.
\end{proof}

\begin{lem}\label{connected1} 
    Let $K$ be a simplicial complex with no ghost vertices, and let $L$ be a subcomplex of $K$. Let $H$ be the homotopy fibre of the map $\caa^{\overline{L}} \to \caa^K$. Suppose that each $A_i$ is at least $(k-1)$-connected. Then $H$ is at least $(k-1)$-connected. 
\end{lem}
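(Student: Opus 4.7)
The plan is to analyze the long exact sequence of homotopy groups associated to the homotopy fibration
\[ H \longrightarrow \caa^{\overline{L}} \longrightarrow \caa^K. \]
First I would separately bound the connectivities of the total space and the base. Viewing $L$ as the simplicial complex $\overline{L}$ on $[m]$ (possibly with ghost vertices), Corollary \ref{conofl} gives that $\caa^{\overline{L}}$ is at least $(k-1)$-connected. Since $K$ has no ghost vertices, Lemma \ref{conofk} applies to give that $\caa^K$ is at least $(2k-2)$-connected. The key observation is that the base has roughly twice the connectivity of the total space, which leaves enough slack in the long exact sequence to force the fibre's connectivity to match that of the total space.

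I would then read off the conclusion from the portion
\[ \pi_{i+1}(\caa^K) \longrightarrow \pi_i(H) \longrightarrow \pi_i(\caa^{\overline{L}}) \]
of the long exact sequence of the fibration. For $0 \leq i \leq k-1$, the right-hand group vanishes by $(k-1)$-connectivity of $\caa^{\overline{L}}$, and the left-hand group vanishes because $i+1 \leq k \leq 2k-2$ sits inside the vanishing range for the homotopy groups of $\caa^K$. Exactness then forces $\pi_i(H) = 0$ for all $i \leq k-1$, so $H$ is $(k-1)$-connected as required.

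There is no substantive obstacle here: the proof is a short bookkeeping exercise combining Corollary \ref{conofl} and Lemma \ref{conofk} with the long exact sequence. The only point requiring any care is ensuring that the inequality $k \leq 2k-2$ holds so that the two vanishing ranges overlap, which is fine in the range of $k$ where the statement has nontrivial content.
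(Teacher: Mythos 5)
Your proof is correct and follows essentially the same route as the paper: both bound the connectivity of $\caa^{\overline{L}}$ by Corollary \ref{conofl} and of $\caa^K$ by Lemma \ref{conofk}, then read off the vanishing of $\pi_i(H)$ from the long exact sequence of the fibration. The only cosmetic difference is that the paper treats the top degree $i = k-1$ separately via the isomorphism $\pi_{k-1}(H) \cong \pi_{k-1}(\caa^{\overline{L}})$, while you handle all degrees $i \leq k-1$ uniformly; your remark about needing $k \leq 2k-2$ (i.e.\ $k \geq 2$) is a fair caveat that applies equally to the paper's argument.
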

\begin{proof}
    The fibration sequence $H \to \caa^{\overline{L}} \to \caa^K$ induces a long exact sequence of homotopy groups. By Lemma \ref{conofk} and Corollary \ref{conofl} we obtain   $\pi_n(\caa^K) = 0$ for $n \leq 2k-2$ and $\pi_n(\caa^{\overline{L}}) = 0$ for $n < k-1$, we have that $\pi_n(H)=0$ for $n <k-1$. When $n = k-1$, consider the following section of the long exact sequence 
    \[\pi_{k}(\caa^K) \to \pi_{k-1}(H) \to \pi_{k-1}(\caa^{\overline{L}}) \to \pi_{k-1}(\caa^K).\]
    As $\caa^K$ is at least $(2k-2)$-connected by Lemma \ref{conofk}, this splits to give the isomorphism 
    \[\pi_{k-1}(H) \cong \pi_{k-1}(\caa^{\overline{L}}).\] 
    Therefore $\pi_{k-1}(H)$ is precisley as connected as $ \pi_{k-1}(\caa^{\overline{L}})$. By Corollary \ref{conofl}, $\caa^{\overline{L}}$ is at least $(k-1)$-connected, and therefore $H$ is at least $(k-1)$-connected.  
\end{proof}

\subsection{Cube Lemma}
We conclude this section by stating the Cube Lemma, which will be used in later constructions. 
\begin{lem}\cite{Cube}\label{cube}
Suppose there is a diagram of spaces and maps 
\[
    \begin{tikzcd}[row sep=1.5em, column sep = 1.5em]
    E \arrow[rr] \arrow[dr] \arrow[dd] &&
    F \arrow[dd] \arrow[dr] \\
    & G \arrow[rr] \arrow[dd]&&
    H \arrow[dd] \\ 
    A \arrow[rr,] \arrow[dr] && B \arrow[dr] \\
    & C \arrow[rr] && D
    \end{tikzcd} 
\]
where the bottom face is a homotopy pushout and the four sides are obtained by pulling back with $H \to D$. Then the top face is a homotopy pushout.  \qed
\end{lem}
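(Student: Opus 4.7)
The plan is to prove the Cube Lemma in the style of Mather's original argument, by reducing to a strict situation where the vertical map $H\to D$ is a Hurewicz fibration and the bottom pushout is a strict pushout built from a cofibration. First I would replace the map $f:H\to D$ by the mapping path space
\[\tilde H=\{(h,\gamma)\in H\times D^I : \gamma(0)=f(h)\},\]
with projection $(h,\gamma)\mapsto\gamma(1)$, which is a Hurewicz fibration and admits a natural weak equivalence $H\simeq\tilde H$ over $D$. Similarly, using the double mapping cylinder, I would replace $A\leftarrow C\to B$ so that (at least) one leg out of $C$ is a closed cofibration and the bottom face becomes a strict pushout $D=A\cup_C B$. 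Under these replacements, the four homotopy pullbacks on the sides of the cube are weakly equivalent to the strict preimages $\tilde f^{-1}(A)$, $\tilde f^{-1}(B)$, $\tilde f^{-1}(C)$, which I identify with $E$, $F$, $G$ up to homotopy equivalence.

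The next step, and the key point, is to verify that the natural comparison map
\[\tilde f^{-1}(A)\cup_{\tilde f^{-1}(C)}\tilde f^{-1}(B)\longrightarrow\tilde H\]
is a homeomorphism. Set-theoretically this is immediate because $\tilde H=\tilde f^{-1}(A\cup B)$ and the intersection of the two preimages is $\tilde f^{-1}(C)$. The topological content is that the quotient topology on the pushout agrees with the subspace topology inherited from $\tilde H$; this is where the combination of a cofibration downstairs with a fibration upstairs enters. Once this is established, the top face of the cube is precisely the strict pushout that models the homotopy pushout, since preimages of a cofibration under a Hurewicz fibration are again cofibrations.

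The hard part is exactly this pushout--pullback interchange: pullbacks in $\mathbf{Top}$ do not in general commute with pushouts, so one must carefully exploit both the fibration property (to ensure preimages compute homotopy pullbacks) and the cofibration property (to ensure the pushout topology is correct). A cleaner route that avoids the point-set verification is to invoke the right properness of the Str{\o}m model structure on $\mathbf{Top}$: pullback along a fibration is then a left Quillen functor on the appropriate slice category and so preserves homotopy pushouts, from which the Cube Lemma follows at once. I would present the proof in the Mather style but flag the model-categorical shortcut, since the applications of the lemma in this paper only use its statement up to weak equivalence.
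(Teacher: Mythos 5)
The paper does not actually prove this lemma: it is Mather's cube theorem, quoted from \cite{Cube} and stated without proof, so there is no in-paper argument to compare yours against and your sketch must stand on its own. Your overall strategy --- replace $H\to D$ by a Hurewicz fibration via the mapping path space, replace the bottom face by a double mapping cylinder so that $D=A\cup_C B$ strictly with closed cofibrations as legs, identify the sides with strict preimages, and use that preimages of closed cofibrations under fibrations are again closed cofibrations --- is indeed the shape of Mather's argument. But the point-set worry you single out is actually the easy part: in the double mapping cylinder model $A$ and $B$ are closed in $D$ and cover it, so \emph{any} space mapping to $D$ automatically carries the weak topology with respect to the two closed preimages (a set whose intersections with two closed sets covering the space are closed is itself closed), and the comparison map is a homeomorphism for elementary reasons, with no interplay of fibrations and cofibrations needed.

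The genuine gaps are elsewhere. First, you never verify that the strict pullback squares over $A$, $B$, $C$ compute the homotopy pullbacks that form the sides of the given cube, nor that the resulting equivalences of $E$, $F$, $G$ with the preimages are compatible with all the maps in the cube; replacing the corners of a square by homotopy equivalent spaces does not transport the homotopy pushout property unless those equivalences commute with the structure maps up to specified homotopies, and arranging this coherence is where most of the work in Mather's proof actually lives. Second, the proposed shortcut rests on a false premise: for $f\colon H\to D$ the pullback functor $f^{*}\colon \mathbf{Top}/D\to\mathbf{Top}/H$ is a \emph{right} adjoint (its left adjoint is postcomposition with $f$), so it cannot be left Quillen, and right properness of the Str\o m model structure only gives that $f^{*}$ preserves weak equivalences and computes homotopy pullbacks; it says nothing about preserving homotopy colimits. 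The assertion that pullback along a fibration preserves homotopy pushouts is precisely the descent (universality of colimits) property of spaces, i.e.\ it is equivalent to the cube theorem you are trying to prove, so invoking it here is circular. A complete proof must either carry out Mather's explicit homotopy constructions or appeal to an independently established descent argument.
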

A common construction of such a cube is to start with a homotopy pushout, and map all four corners of this pushout into a space $Z$, and take the top face to be a diagram of the homotopy fibres of these maps. This is equivalent to the statement that all four sides are homotopy pullbacks, and so the diagram of homotopy fibres is a homotopy pushout. 
 
\section{Polyhedral join products}

In this section, we introduce the polyhedral join product, and prove many fundamental combinatorial results. Recall that the polyhedral join product, as defined by \cite{Abramyan_2019} is 
\[(\underline{K},\underline{L})^{*M} = \bigcup_{\sigma \in M} (\underline{K},\underline{L})^{*\sigma}\] where each $(\underline{K},\underline{L})^{*\sigma}$ is defined as \[(\underline{K},\underline{L})^{*\sigma} = \overset{m}{\underset{i=1} \ast} \; Y_i,\;\; Y_i = \begin{cases} K_i & i \in \sigma \\ L_i & \mathrm{otherwise}. \end{cases}\]
 We now consider some examples of polyhedral join products on pairs of simplicial complexes. 

\begin{ex} \label{pjpex}
Let $K$ be a simplicial complex on the vertex set $[m]$, and let $K_1,\cdots,K_m$ be simplicial complexes on the vertex sets $[k_1],\cdots,[k_m]$ respectively. 
\begin{enumerate}
    \item The \emph{substitution complex} $K(K_1,\cdots,K_m)$ as introduced in \cite{Abramyan_2019} is the polyhedral join product on the pairs $(K_i,\emptyset)$.
    \item  The \emph{composition complex}  $K \langle K_1 ,\cdots, K_m \rangle$ as introduced in \cite{construct} is the polyhedral join product on the pairs $\{(\Delta^{m_i-1},K_i)\}_{i=1}^m$. 
\end{enumerate}
\end{ex}

We now prove some combinatorial properties of the polyhedral join product.

\begin{lem}\label{pjpsubcomplex}
     Let $M$ be a simplicial complex on the vertex set $[m]$, and let $N$ be a subcomplex on the vertex set $[l]$ where $l \leq m$. Then $\kl^{*N}$ is a subcomplex of $\kl^{*M}$. Furthermore, if $N$ is a full subcomplex of $M$, $\kl^{*N}$ is a full subcomplex of $\kl^{*M}$.
\end{lem}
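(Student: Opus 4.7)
The plan is to prove both claims by direct inspection of the definition of the polyhedral join product as a union indexed by simplices of the underlying complex. For the subcomplex statement, since $N$ is a subcomplex of $M$, every $\sigma \in N$ also lies in $M$, so the sub-join $\kl^{*\sigma}$ is one of the pieces whose union defines $\kl^{*M}$, giving
\[
\kl^{*N} \;=\; \bigcup_{\sigma \in N} \kl^{*\sigma} \;\subseteq\; \bigcup_{\sigma \in M} \kl^{*\sigma} \;=\; \kl^{*M}
\]
as subcomplexes of $K_1 \ast \cdots \ast K_m$.

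For the fullness statement I would start with an arbitrary face $\tau \in \kl^{*M}$ whose vertices all lie in the vertex set of $\kl^{*N}$ and produce a simplex of $N$ whose sub-join contains $\tau$. Decompose $\tau$ along the ambient join as $\tau = \tau_1 \cup \cdots \cup \tau_m$ with $\tau_i$ supported on the vertices of $K_i$, and pick any $\sigma \in M$ witnessing $\tau \in \kl^{*\sigma}$; thus $\tau_i \in K_i$ for $i \in \sigma$ and $\tau_i \in L_i$ for $i \notin \sigma$. Set $\sigma' = \sigma \cap [l]$. Then $\sigma' \subseteq \sigma$ gives $\sigma' \in M$, and since $\sigma' \subseteq [l]$, the fullness of $N$ in $M$ forces $\sigma' \in N$.

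The remaining task, which is the main step, is to verify that $\tau \in \kl^{*\sigma'}$. For $i \in \sigma'$ one already has $\tau_i \in K_i$, and for $i \notin \sigma$ one already has $\tau_i \in L_i$, so the work concentrates on indices $i \in \sigma \setminus \sigma'$, namely those with $i > l$. For such $i$, the assumption that $\tau$ is supported on the vertex set of $\kl^{*N}$ restricts $\tau_i$ to the vertices appearing at the $i$-th factor of $\kl^{*N}$; combining this with the explicit description of that factor forces $\tau_i \in L_i$, which is exactly what is needed to conclude $\tau \in \kl^{*\sigma'} \subseteq \kl^{*N}$. The delicate point is precisely this bookkeeping of vertex sets at positions $i > l$; once it is settled the cases $i \in \sigma'$ and $i \notin \sigma$ are immediate, and the fullness claim follows.
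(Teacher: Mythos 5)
Your proposal is correct and follows the same basic route as the paper: the subcomplex claim by comparing the two unions over $\sigma\in N\subseteq M$, and the fullness claim by decomposing a face of $\kl^{*M}$ along the ambient join and invoking fullness of $N$ in $M$. The one genuine difference is at the step you flag as delicate: the paper argues by contradiction, takes a witness $\sigma\in M$ for the offending face, and immediately asserts $\sigma\in N$ by fullness, silently assuming $\sigma\subseteq[l]$; you instead pass to $\sigma'=\sigma\cap[l]$ and check the indices $i\in\sigma\setminus[l]$ separately, observing that the support condition forces $\tau_i$ to be empty (the factors beyond $[l]$ contribute no vertices to $\kl^{*N}$) and hence $\tau_i\in L_i$. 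This is a more careful treatment of the only nontrivial point, and it is worth writing out that final verification explicitly rather than leaving it as a described task, since it is precisely the step the paper's own proof elides.
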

\begin{proof}
    Let $\tau_i$ denote a simplex in $K_i$ and let $\omega_i$ denote a simplex in $L_i$. Every simplex $\sigma \in N$ will generate simplices of the form $(\cup_{i \in \sigma} \tau_i) \cup (\cup_{i \notin \sigma} \omega_i)$. As $N$ is a subcomplex of $M$, $\sigma \in M$, and so $(\cup_{i \in \sigma} \tau_i) \cup (\cup_{i \notin \sigma} \omega_i) \in \kl^{*M}$ for all $\sigma \in N$. Therefore $\kl^{*N}$ is a subcomplex of $\kl^{*M}$. Now let $N$ be a full subcomplex of $M$. Assume $\kl^{*N}$ is not a full subcomplex of $M$, that is, there exists a simplex $\sigma'$ on the vertex set of $\kl^{*N}$ such that $\sigma' \in \kl^{*M}$, but $\sigma' \notin \kl^{*N}$. By definition, $\sigma' = (\cup_{i \in \sigma} \tau_i) \cup (\cup_{i \notin \sigma} \omega_i)$ for some $\sigma \in M$. As $N$ is a full subcomplex of $M$, $\sigma \in N$. By the definition of the polyhedral join product, $(\cup_{i \in \sigma} \tau_i) \cup (\cup_{i \notin \sigma} \omega_i)$ will form a simplex in $\kl^{*N}$, contradicting the claim that $\sigma' \notin \kl^{*N}$. Therefore $\kl^{*N}$ is a full subcomplex of $\kl^{*M}$.
\end{proof}

\begin{lem}\label{ppjpush}
    Let 
    \[\begin{tikzcd}
        N \arrow[r] \arrow[d] 
        &M_B \arrow[d] \\
        M_A \arrow[r]
        & M 
    \end{tikzcd}\]
    be a pushout of simplicial complexes. Then
    \[\begin{tikzcd}
        \kl^{*\overline{N}} \arrow[r] \arrow[d] 
        &\kl^{*\overline{M_B}} \arrow[d] \\
        \kl^{*\overline{M_A}} \arrow[r]
        & \kl^{*M} 
    \end{tikzcd}\] is a pushout of simplicial complexes, where $M$ is on the vertex set $[m]$ , and $\overline{N}$, $\overline{M_A}$ and $\overline{M_B}$ refer to $N$,$M_A$,$M_B$ regarded as simplicial complexes on the vertex set of $M$. 
\end{lem}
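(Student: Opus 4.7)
The plan is to verify directly that the second square has the defining properties of a pushout of simplicial complexes, namely that the total complex is the union of the two sides and the intersection of the two sides equals the common subcomplex. From the hypothesis that the first square is a pushout, we may treat $\overline{M_A}$, $\overline{M_B}$, $\overline{N}$ as subcomplexes of $M$ on vertex set $[m]$ satisfying $\overline{M_A}\cup\overline{M_B}=M$ and $\overline{M_A}\cap\overline{M_B}=\overline{N}$.

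For the union assertion $\kl^{*M}=\kl^{*\overline{M_A}}\cup\kl^{*\overline{M_B}}$, I would appeal directly to the definition $\kl^{*M}=\bigcup_{\sigma\in M}\kl^{*\sigma}$. Every $\sigma\in M$ lies in $\overline{M_A}$ or in $\overline{M_B}$, so the union splits as the union of the two analogous unions indexed by $\overline{M_A}$ and $\overline{M_B}$; each $\kl^{*\sigma}$ is unchanged since the definition of $\kl^{*\sigma}$ already involves all of $[m]$ (using $L_i$ whenever $i\notin\sigma$, including ghost vertices).

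For the intersection assertion, the inclusion $\kl^{*\overline{N}}\subseteq\kl^{*\overline{M_A}}\cap\kl^{*\overline{M_B}}$ is an immediate consequence of Lemma~\ref{pjpsubcomplex}. For the reverse inclusion, given a simplex $\tau\in\kl^{*\overline{M_A}}\cap\kl^{*\overline{M_B}}$, write $\tau=\bigsqcup_{i=1}^m\tau|_i$, where $\tau|_i$ is the restriction of $\tau$ to the vertex set of $K_i$ (this decomposition is unique because $\kl^{*M}$ lives on the disjoint union $\bigsqcup_i V(K_i)$). Define
\[\sigma_\tau=\{\,i\in[m]\mid \tau|_i\notin L_i\,\}.\]
Since $\tau\in\kl^{*\overline{M_A}}$, there exists $\sigma_A\in\overline{M_A}$ with $\tau|_i\in L_i$ for all $i\notin\sigma_A$, forcing $\sigma_\tau\subseteq\sigma_A$. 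By downward closure of the simplicial complex $\overline{M_A}$, this gives $\sigma_\tau\in\overline{M_A}$; a symmetric argument yields $\sigma_\tau\in\overline{M_B}$. Hence $\sigma_\tau\in\overline{M_A}\cap\overline{M_B}=\overline{N}$, and by the very definition of $\sigma_\tau$ we have $\tau\in\kl^{*\sigma_\tau}\subseteq\kl^{*\overline{N}}$, completing the reverse inclusion.

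The only subtle point is the construction of $\sigma_\tau$: one must choose the canonical minimal $\sigma$ for which $\tau$ arises, and verify that the decomposition $\tau=\bigsqcup_i\tau|_i$ is well-defined. Once that is in place, everything else reduces to using the hypothesis that the original square is a union-intersection pushout of simplicial complexes together with the downward closure property.
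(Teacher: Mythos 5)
Your proof is correct and follows the same overall strategy as the paper's: both reduce the claim to the two identities $\kl^{*\overline{M_A}} \cup \kl^{*\overline{M_B}} = \kl^{*M}$ and $\kl^{*\overline{M_A}} \cap \kl^{*\overline{M_B}} = \kl^{*\overline{N}}$, using the definition $\kl^{*M} = \bigcup_{\sigma \in M}\kl^{*\sigma}$. The only difference is in the intersection step, where the paper partitions the faces of $M$ into those of $N$, those of $M_A$ not in $N$, and those of $M_B$ not in $N$ and then asserts the identity, whereas your support-set argument (defining $\sigma_\tau=\{i\mid \tau|_i\notin L_i\}$ and using downward closure) supplies the simplex-level justification that the paper leaves implicit; this is a welcome extra level of rigor but not a different method.
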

\begin{proof}
    As $M$ is finite, the faces in $M$ can be sorted into three categories: (X) the faces in $N$, (Y) the faces in $M_A$ that are not in $N$, and (Z) the faces in $M_B$ that are not in $N$. Therefore we have that
    \[N = \bigcup_{\sigma \in X} \sigma,\]
    \[M_A = \big(\bigcup_{\sigma \in X} \sigma \big) \cup \big(\bigcup_{\sigma' \in Y} \sigma'\big),\]
    \[M_B = \big(\bigcup_{\sigma \in X} \sigma \big) \cup \big(\bigcup_{\sigma'' \in Z} \sigma''\big),\]
    \[M = \big(\bigcup_{\sigma \in X} \sigma \big) \cup \big(\bigcup_{\sigma' \in Y} \sigma'\big) \cup \big(\bigcup_{\sigma'' \in Z} \sigma''\big).\]
    By the definition of the polyhedral join product, we have that $\kl^{*M} = \bigcup_{\sigma \in M} \kl^{*\sigma}$. Therefore 
    \[\kl^{*\overline{N}} = \bigcup_{\sigma \in X} \kl^{*\sigma},\]
    \[\kl^{*\overline{M_A}} = \big(\bigcup_{\sigma \in X} \kl^{*\sigma} \big) \cup \big(\bigcup_{\sigma' \in Y} \kl^{*\sigma'}\big),\]
    \[ \kl^{*\overline{M_B}}= \big(\bigcup_{\sigma \in X} \kl^{*\sigma} \big) \cup \big(\bigcup_{\sigma'' \in Z} \kl^{*\sigma''} \big),\]
    \[\kl^{*M} = \big(\bigcup_{\sigma \in X} \kl^{*\sigma} \big) \cup \big(\bigcup_{\sigma' \in Y} \kl^{*\sigma'}\big) \cup \big(\bigcup_{\sigma'' \in Z} \kl^{*\sigma''}\big).\]

    As $\kl^{*\overline{M_A}} \cup \kl^{*\overline{M_B}} = \kl^{*M}$ and $\kl^{*\overline{M_A}} \cap \kl^{*\overline{M_B}} = \kl^{*\overline{N}}$, the result holds. 
\end{proof}

\begin{lem}\label{ppjjoin}
    There is an equality of simplicial complexes \[\kl^{*(M*N)} = \kl^{*M}*\kl^{*N}.\]
\end{lem}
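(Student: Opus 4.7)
The plan is to unpack both sides of the claimed equality via the definition of the polyhedral join product and then reduce to the elementary fact that the simplicial join distributes over unions of subcomplexes.

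First I would fix notation: suppose $M$ is on $[m]$ and $N$ is on $\{m+1,\dots,m+n\}$, so the pairs $(K_i,L_i)$ for $i \in [m]$ feed into $\kl^{*M}$ and those for $i \in \{m+1,\dots,m+n\}$ feed into $\kl^{*N}$, while all $m+n$ pairs feed into $\kl^{*(M*N)}$. The key observation is that every simplex $\rho \in M*N$ has a unique decomposition $\rho = \sigma \cup \tau$ with $\sigma \in M$ and $\tau \in N$, since the vertex sets are disjoint.

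Next I would verify that for such a $\rho$,
\[
(\underline{K},\underline{L})^{*\rho} \;=\; (\underline{K},\underline{L})^{*\sigma}\,*\,(\underline{K},\underline{L})^{*\tau},
\]
where on the right, the first factor is the join over $i \in [m]$ taking $Y_i = K_i$ for $i \in \sigma$ and $L_i$ otherwise, and similarly the second factor is over $i \in \{m+1,\dots,m+n\}$. This is immediate from the definition: one splits the big join $\ast_{i=1}^{m+n} Y_i$ into the first $m$ factors and the last $n$ factors and notes that the rule for $Y_i$ (namely $K_i$ iff $i \in \rho$) coincides with the corresponding rule for $\sigma$ on $[m]$ and for $\tau$ on $\{m+1,\dots,m+n\}$.

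With this in hand, I would compute
\[
\kl^{*(M*N)} \;=\; \bigcup_{\rho \in M*N} (\underline{K},\underline{L})^{*\rho} \;=\; \bigcup_{\sigma \in M,\, \tau \in N} (\underline{K},\underline{L})^{*\sigma} * (\underline{K},\underline{L})^{*\tau}.
\]
Finally I would invoke distributivity of the simplicial join over union of subcomplexes, namely $(\bigcup_\alpha A_\alpha)*(\bigcup_\beta B_\beta) = \bigcup_{\alpha,\beta} A_\alpha * B_\beta$ (a one-line check: a simplex of a join is a disjoint union of simplices from each factor), to identify the last union with
\[
\Big(\bigcup_{\sigma \in M}(\underline{K},\underline{L})^{*\sigma}\Big) * \Big(\bigcup_{\tau \in N}(\underline{K},\underline{L})^{*\tau}\Big) = \kl^{*M} * \kl^{*N},
\]
which is the desired equality. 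There is no real obstacle here; the only care needed is bookkeeping the vertex ranges so that the $m$-fold and $n$-fold joins concatenate correctly into the $(m+n)$-fold join.
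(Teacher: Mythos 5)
Your proposal is correct and follows essentially the same route as the paper: decompose each simplex of $M*N$ as $\sigma\cup\tau$, observe that $\kl^{*\sigma\cup\tau}=\kl^{*\sigma}*\kl^{*\tau}$ by splitting the join, and then distribute the join over the union. The only difference is that you make the distributivity of join over union explicit, which the paper leaves as the final chain of equalities.
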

\begin{proof}
    Simplices in $M*N$ will be of the form $\sigma \cup \tau$, where $\sigma \in M$, $ \tau \in N$.
     Consider $\kl^{*\sigma \cup \tau}$. By definition, this is $*^{m}_{i=1} Y_i$ where $Y_i = K_i$ if $i \in \sigma \cup \tau$ and $L_i$ otherwise. As $\sigma$ and $\tau$ are on disjoint vertex sets, $*^{m}_{i=1} Y_i$ can be written as $(*_{j=1}^{l} Y_i) * (*_{k=l+1}^m Y_k)$ where $Y_j = K_j$ if $j \in \sigma$ and $L_j$ otherwise, and $Y_k=K_k$ if $k \in \tau$, and $L_k$ otherwise. Therefore, $\kl^{*\sigma \cup \tau} = \kl^{*\sigma} * \kl^{*\tau}$.
    Now\begin{multline*}\kl^{*(M*N)}= \bigcup_{\sigma \in M, \tau \in N} \kl^{*\sigma \cup \tau} = \bigcup_{\sigma \in M, \tau \in N} \kl^{*\sigma} * \kl^{*\tau} = \\ \big( \bigcup_{\sigma \in M} \kl^{*\sigma} \big) * \big( \bigcup_{\tau \in N} \kl^{*\tau} \big) = \kl^{*M} * \kl^{*N}. \end{multline*}
\end{proof}

\begin{lem}\label{3.5}
    Let $N$ be a simplicial complex on the vertex set $[m-1]$, and let $\overline{N}$ be the simplicial complex $N$ considered as a simplicial complex on $[m]$. Then $\kl^{*\overline{N}} = \kl^{*N} * L_m$.
\end{lem}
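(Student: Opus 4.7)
My plan is to unpack the definition of the polyhedral join product and exploit the fact that the vertex $m$ is a ghost vertex in $\overline{N}$. Since $\overline{N}$ is just $N$ regarded on the larger vertex set $[m]$, every simplex $\sigma \in \overline{N}$ satisfies $m \notin \sigma$, and the set of simplices of $\overline{N}$ coincides with the set of simplices of $N$.

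First I would take an arbitrary simplex $\sigma \in \overline{N}$ and apply the definition of $\kl^{*\sigma}$. Because $m \notin \sigma$, the $m$-th factor $Y_m$ in the join $*_{i=1}^m Y_i$ is forced to equal $L_m$, so
\[
\kl^{*\sigma} \;=\; \Bigl(\underset{i=1}{\overset{m-1}{\Conv}} Y_i\Bigr) * L_m,
\]
where the first factor on the right is exactly the polyhedral join for $\sigma$ viewed as a simplex in $N$ on the vertex set $[m-1]$ with the first $m-1$ pairs.

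Next I would take the union over all simplices of $\overline{N}$ and pull $L_m$ outside, using the elementary fact that the join of simplicial complexes distributes over unions, i.e.\ $\bigl(\bigcup_\alpha A_\alpha\bigr) * B = \bigcup_\alpha (A_\alpha * B)$. This gives
\[
\kl^{*\overline{N}} \;=\; \bigcup_{\sigma \in \overline{N}} \kl^{*\sigma} \;=\; \bigcup_{\sigma \in N} \bigl(\kl^{*\sigma} * L_m\bigr) \;=\; \Bigl(\bigcup_{\sigma \in N} \kl^{*\sigma}\Bigr) * L_m \;=\; \kl^{*N} * L_m,
\]
which is the claimed equality.

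This argument is essentially bookkeeping with the definition, so I do not anticipate a real obstacle; the only subtle point is making sure the reader sees that $\kl^{*\sigma}$ on the left refers to a join of $m$ spaces while on the right it refers to a join of $m-1$ spaces, and that these agree after pulling out the $L_m$ factor. As a sanity check, one could alternatively deduce the identity from Lemma \ref{ppjjoin} by viewing $\overline{N}$ as the join of $N$ with the ``empty'' complex on the single vertex $\{m\}$, but the direct argument above is cleaner and avoids any convention issues around the empty complex.
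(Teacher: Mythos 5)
Your proof is correct and follows essentially the same route as the paper's: unpack $\kl^{*\sigma}$ from the definition, observe that the ghost vertex $m$ forces $Y_m = L_m$ so that $\kl^{*\overline{\sigma}} = \kl^{*\sigma} * L_m$, and then take the union over all faces. Your explicit remark that the join distributes over unions makes precise a step the paper leaves implicit, but there is no substantive difference in the argument.
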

\begin{proof}
    Let $\sigma$ denote a simplex in $N$, and let $\overline{\sigma}$ denote the same simplex viewed as a simplex in $\overline{N}$. Consider $\kl^{*\overline{\sigma}}$. By definition, this is $\overset{m}{\underset{i=1}{\ast}} \; Y_i = (\underset{i \in \overline{\sigma}}{\ast} K_i) \ast (\underset{i \notin \overline{\sigma}}{\ast} L_i)$. As $m$ is a ghost vertex, $m \notin \overline{\sigma}$ for all $\overline{\sigma} \in \overline{N}$, and so 
    \[\kl^{*\overline{\sigma}} = (\underset{i \in \sigma}{\ast} K_i )* (\underset{i \notin \sigma, i \neq m}{\ast} L_i) * L_m = \kl^{*\sigma} * L_m\] where $\sigma$ is now considered a simplex of $N$. Taking the union over all faces $\sigma \in N$ gives the required result. 
\end{proof}
\begin{corollary}\label{pjppushout}
    There is a pushout 
    \[\begin{tikzcd}
        \kl^{*\mathrm{lk}_M(i)} * L_i \arrow[r] \arrow[d]
        & \kl^{*\mathrm{lk}_M(i)} *K_i \arrow[d] \\
        \kl^{*M \setminus i} * L_i \arrow[r]
        & \kl^{*M}.
    \end{tikzcd}\]
    where $\mathrm{lk}_M(i)$ is regarded as a simplicial complex on the vertex set of $M \setminus i$.
\end{corollary}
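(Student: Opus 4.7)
The plan is to apply Lemma \ref{ppjpush} to the standard pushout \eqref{1} associated to the vertex $i$ in $M$, namely
\[
\begin{tikzcd}
    \mathrm{lk}_M(i) \arrow[r] \arrow[d]
    & \mathrm{st}_M(i) \arrow[d] \\
    M \setminus \{i\} \arrow[r]
    & M.
\end{tikzcd}
\]
Applying Lemma \ref{ppjpush} (viewing $\mathrm{lk}_M(i)$, $\mathrm{st}_M(i)$, and $M\setminus i$ as subcomplexes of $M$ on $[m]$, with $i$ treated as a ghost vertex in the first and third of these) produces a pushout of polyhedral join products whose right-bottom corner is $\kl^{*M}$. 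The remaining task is to identify each of the other three corners with the corresponding corner of the claimed pushout.

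For the left-hand corners, I would invoke Lemma \ref{3.5}. That lemma is stated for the ghost vertex in position $m$, but the proof uses nothing special about that index: exactly the same argument, permuting the roles of the indices, shows that whenever a simplicial complex $N$ on $[m] \setminus \{i\}$ is regarded as a simplicial complex $\overline{N}$ on $[m]$ by making $i$ a ghost vertex, one has $\kl^{*\overline{N}} = \kl^{*N} * L_i$. This immediately identifies $\kl^{*\overline{\mathrm{lk}_M(i)}} = \kl^{*\mathrm{lk}_M(i)} * L_i$ and $\kl^{*\overline{M \setminus i}} = \kl^{*M \setminus i} * L_i$, where in each case $\mathrm{lk}_M(i)$ and $M\setminus i$ are regarded as simplicial complexes on $[m]\setminus\{i\}$.

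For the top-right corner, I would use Lemma \ref{ppjjoin}. Since $\mathrm{st}_M(i) = \mathrm{lk}_M(i) * \{i\}$ as a join of simplicial complexes on disjoint vertex sets, Lemma \ref{ppjjoin} gives $\kl^{*\mathrm{st}_M(i)} = \kl^{*\mathrm{lk}_M(i)} * \kl^{*\{i\}}$. The polyhedral join product over the one-vertex complex $\{i\}$ is, directly from the definition, the union of $\kl^{*\emptyset} = L_i$ and $\kl^{*\{i\}} = K_i$, and since $L_i \subseteq K_i$ this union is just $K_i$. Therefore $\kl^{*\mathrm{st}_M(i)} = \kl^{*\mathrm{lk}_M(i)} * K_i$. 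Substituting these three identifications into the pushout produced by Lemma \ref{ppjpush} yields exactly the square in the statement.

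The main thing to be careful about is not the mathematics, which is a direct combination of Lemmas \ref{ppjpush}, \ref{ppjjoin}, and \ref{3.5}, but the bookkeeping around which pairs from the sequence $\kl$ are implicitly indexing each polyhedral join product at each step, especially when moving between the "on $[m]$ with ghost vertex $i$" and "on $[m] \setminus \{i\}$" viewpoints. Once that convention is pinned down, the proof is essentially a substitution exercise.
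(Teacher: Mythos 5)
Your proposal is correct and follows essentially the same route as the paper, which derives the corollary by applying Lemma \ref{ppjpush} to the pushout \eqref{1} and identifying the ghost-vertex corners via (the obvious relabelling of) Lemma \ref{3.5}. You additionally spell out the identification of the star corner via Lemma \ref{ppjjoin} and the computation $\kl^{*\{i\}} = K_i$, a step the paper leaves implicit; that is a welcome bit of extra care rather than a divergence.
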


\begin{proof}
    This follows directly from Lemma \ref{3.5}, Lemma \ref{ppjpush} and the pushout \ref{1}.     
\end{proof}

We can construct the polyhedral join product sequentially as follows. Define $M(i)$ as the polyhedral join product $(\underline{P^i},\underline{Q^i})^{*M}$ where the sequence $(\underline{P^i},\underline{Q^i})$ is defined as \[(P^i_j,Q^i_j) = \begin{cases} (K_j,L_j) & j \leq i\\ (\{j\},\emptyset) & j > i. \end{cases}\]

The vertex set of $M(i)$ shall be regarded as \[\{\{j^1_{1},\cdots,j^1_{k_1}\},\{j^2_{1},\cdots,j^2_{k_2}\},\cdots,\{j^i_{1},\cdots,j^i_{k_i}\},i+1,\cdots,m\}=\{[k_1],\cdots,[k_i],i+1,\cdots,m\}\] This allows us to keep track of the original vertex set of $M$ and the new vertices added by the polyhedral join operation. 

Note that $M(m) = \kl^{*M}$, $M(0) = M$, and each $M(i)$ is a subcomplex of $M(i+1)$. 

\begin{lem}\label{deleteeq}
     Let $\kl$ be a sequence of pairs of simplicial complexes, and let $(\underline{P},\underline{Q})$ be a sequence of simplicial complexes such that 
    \[(P_j,Q_j) = \begin{cases} (K_j,L_j) & j \neq i \\ (\{i\},\emptyset) & j = i. \end{cases}\] Then there is a identity of simplical complexes 
    \[\kl^{*M\setminus i} = ((\underline{P},\underline{Q})^{*M}) \setminus i.\] Furthermore, there is a identity of simplical complexes 
    \[\kl^{*\mathrm{lk}_M(i)} = \mathrm{lk}_{(\underline{P},\underline{Q})^{*M}} (i)\] where $\mathrm{lk}_M(i)$ is considered on the vertex set of $M \setminus i$ and $\mathrm{lk}_{(\underline{P},\underline{Q})^{*M}} (i)$ is considered on the vertex set of $((\underline{P},\underline{Q})^{*M}) \setminus i$. 
\end{lem}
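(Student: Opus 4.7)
The plan is to prove both equalities of simplicial complexes by a direct face-by-face comparison, exploiting the fact that by construction the vertex $i$ appears in $(\underline{P},\underline{Q})^{*M}$ only through the slot $P_i = \{i\}$, which is used exactly when $i \in \sigma$. Concretely, a generic face of $(\underline{P},\underline{Q})^{*\sigma}$ has the form $\bigcup_{j=1}^m \alpha_j$ with $\alpha_j$ a face of $Y_j$, where $Y_j = P_j$ if $j \in \sigma$ and $Y_j = Q_j$ otherwise. Thus $Y_i$ is either $\{i\}$ (when $i \in \sigma$) or $\emptyset$ (when $i \notin \sigma$), and the vertex $i$ lies in the face if and only if $i \in \sigma$ and $\alpha_i = \{i\}$. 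I would record this observation up front and then specialise it twice.

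For the first identity, I would show that $((\underline{P},\underline{Q})^{*M}) \setminus i$ consists precisely of those faces of $(\underline{P},\underline{Q})^{*M}$ arising in one of two ways: either from some $\sigma \in M$ with $i \notin \sigma$, in which case the face agrees with one in $\kl^{*\sigma}$ for $\sigma \in M \setminus i$; or from some $\sigma \in M$ with $i \in \sigma$ but $\alpha_i = \emptyset$, in which case the face agrees with one in $\kl^{*(\sigma \setminus i)}$, with $\sigma \setminus i \in M \setminus i$. Taking unions over all such $\sigma$ gives the inclusion from right to left, and every face of $\kl^{*\sigma'}$ for $\sigma' \in M \setminus i$ lifts back along the first case, yielding the reverse inclusion.

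For the second identity, $\mathrm{lk}_{(\underline{P},\underline{Q})^{*M}}(i)$ is the set of faces $\alpha$ with $i \notin \alpha$ and $\alpha \cup \{i\} \in (\underline{P},\underline{Q})^{*M}$. The second condition forces the existence of some $\sigma \in M$ with $i \in \sigma$ so that $\alpha \cup \{i\} \in (\underline{P},\underline{Q})^{*\sigma}$; writing $\sigma' = \sigma \setminus i$ gives $\sigma' \in \mathrm{lk}_M(i)$ and exhibits $\alpha$ as a face of $\kl^{*\sigma'}$. Conversely, every face of $\kl^{*\sigma'}$ for $\sigma' \in \mathrm{lk}_M(i)$ lifts to a face of $(\underline{P},\underline{Q})^{*(\sigma' \cup \{i\})}$ by adjoining $\alpha_i = \{i\}$, so $\alpha \cup \{i\}$ is indeed a face of $(\underline{P},\underline{Q})^{*M}$. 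The main obstacle is purely notational: one must fix the convention that $A * \emptyset = A$ for the join with the empty complex (the same convention already tacitly used in Example \ref{pjpex}(1) and Lemma \ref{3.5}), so that the slot $Q_i = \emptyset$ does not suppress the rest of the join. Once this convention is pinned down, both identities reduce to straightforward bookkeeping with no further combinatorial content.
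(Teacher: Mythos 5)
Your proposal is correct and follows essentially the same route as the paper's own proof: a direct face-by-face double-inclusion argument, splitting the faces of $(\underline{P},\underline{Q})^{*M}$ according to whether the generating simplex $\sigma \in M$ contains $i$ and whether the $i$-slot contributes the vertex $i$. Your explicit remark about the convention $A * \emptyset = A$ is a small point of added care that the paper handles only implicitly, but it does not change the argument.
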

\begin{proof}
    Let $\tau_j$ denote a simplex in $K_j$ and let $\omega_j$ denote a simplex in $L_j$. The simplices of $\kl^{*M\setminus i}$ are all of the form $( \cup_{j \in \sigma} \tau_j ) \cup ( \cup_{j \notin \sigma, j \neq i} \omega_j)$ for $\sigma \in M \setminus i$. As every simplex of $M \setminus i$ is a simplex of $M$, all such simplices will be contained in $((\underline{P},\underline{Q})^{*M})$. As $ i \notin \sigma$, these simplices will be unchanged under the deletion of $i$ from $((\underline{P},\underline{Q})^{*M})$. Therefore $\kl^{*M\setminus i} \subseteq ((\underline{P},\underline{Q})^{*M}) \setminus i$. The simplices of $((\underline{P},\underline{Q})^{*M}) \setminus i$ will be of two forms: those induced by $\gamma \in M$, such that $i \notin \gamma$, and those induced by $\kappa \in M$ such that $i \in \kappa$. The simplices of $((\underline{P},\underline{Q})^{*M}) \setminus i$ induced by $\gamma$ are clearly in $\kl^{*M\setminus i}$. If $\kappa \in M$ contains $i$, then $\kappa$ generates simplices of the form $(( \cup_{j \in \kappa} \tau_j ) \cup ( \cup_{ j\notin \kappa} \omega_j)) \setminus i =  (( \cup_{j \in \kappa,j \neq i} \tau_j ) \cup ( \cup_{j \notin \kappa, j \neq i} \omega_j) \cup \{i\}) \setminus \{i\} = ( \cup_{j\ \in \kappa,j \neq i} \tau_j ) \cup ( \cup_{ j \notin \kappa} \omega_j)$.  
    As $\kappa \in M$ and $i \in \kappa$, we have $\kappa \setminus i \in M \setminus i$. Therefore, $\kl^{*M \setminus i}$ contains all simplices of the form $( \cup_{j \in \kappa, j\neq i} \tau_j ) \cup ( \cup_{ j \notin \kappa} \omega_j)$. As all simplices of $((\underline{P},\underline{Q})^{*M}) \setminus i$ are contained in $\kl^{*M \setminus i}$, we obtain $((\underline{P},\underline{Q})^{*M})\setminus i \subseteq \kl^{*M \setminus i}$. We therefore have $\kl^{*M \setminus i} = ((\underline{P},\underline{Q})^{*M}) \setminus i$. 
   
    Now let us consider the second statement. Simplices of $\kl^{*\mathrm{lk}_M(i)}$ are of the form $(\cup_{j \in \sigma} \tau_j) \cup (\cup_{j \notin \sigma} \omega_j)$ where $\omega \cup i \in M$. By Lemma \ref{pjpsubcomplex}, $\kl^{*\mathrm{lk}_M(i)}$ is a subcomplex of $\kl^{*M\setminus i}$, and so by the above equality, $\kl^{*\mathrm{lk}_M(i)}$ is a subcomplex of $(\underline{P},\underline{Q})^{*M} \setminus i$, and so $(\cup_{j \in \sigma} \tau_j) \cup (\cup_{j \notin \sigma} \omega_j)$ is contained in $(\underline{P},\underline{Q})^{*M} \setminus i$. As $\sigma \in \mathrm{lk}_M(i)$, $\sigma \cup i \in M$, and therefore $(\cup_{j \in \sigma }\tau_j) \cup (\cup_{j\notin \sigma, j \neq i} \omega_j) \cup i$ is a simplex of $(\underline{P},\underline{Q})^{*M}$. By the definition of the link of a vertex, the simplex  $(\cup_{j \in \sigma} \tau_j) \cup (\cup_{j \notin \sigma, j \neq i} \omega_j)$ is contained in $\mathrm{lk}_{(\underline{P},\underline{Q})^{*M})}(i)$. Therefore $\kl^{*\mathrm{lk}_M(i)} \subseteq \mathrm{lk}_{(\underline{P},\underline{Q})^{*M})}(i)$. Now consider a simplex in $\mathrm{lk}_{(\underline{P},\underline{Q})^{*M}}(i)$. Such simplices have the form $(\cup_{j \in \gamma} \tau_j) \cup (\cup_{j \notin \gamma, j \neq i} \omega_j)$ where $(\cup_{j \in \gamma} \tau_j) \cup (\cup_{j \notin \gamma, j \neq i} \omega_j) \cup i \in (\underline{P},\underline{Q})^{*M}$. As $(\cup_{j \in \gamma} \tau_j) \cup (\cup_{j \notin \gamma, j \neq i} \omega_j) \cup \{i\}$ is a simplex of $(\underline{P},\underline{Q})^{*M}$, this implies that $\gamma \cup \{i\}$ is a simplex of $M$. Therefore for all $\gamma$ such that $(\cup_{j \in \gamma} \tau_j) \cup (\cup_{j \notin \gamma, j \neq i} \omega_j) \in \mathrm{lk}_{(\underline{P},\underline{Q})^{*M}}(i)$, the simplex $\gamma \in \mathrm{lk}_M(i)$. Therefore $(\cup_{j \in \gamma} \tau_j) \cup (\cup_{j \notin \gamma, j \neq m} \omega_j) \in \kl^{*\mathrm{lk}_M(j)}$, and so $\mathrm{lk}_{(\underline{P},\underline{Q})^{*M}}(i) \subseteq \kl^{*\mathrm{lk}_M(i)}$. We therefore have the equality $\kl^{*\mathrm{lk}_M(i)} = \mathrm{lk}_{(\underline{P},\underline{Q})^{*M}}(i)$. 
\end{proof}

\begin{lem}\label{eq}  
Let $\kl$ be a sequence of pairs of simplicial complexes, and let $(\underline{P},\underline{Q})$ be a sequence of simplicial complexes such that 
    \[(P_j,Q_j) = \begin{cases} (K_j,L_j) & j \neq i \\ (\{i\},\emptyset) & j = i. \end{cases}\] Let $\overline{(\underline{P},\underline{Q})^{*M}}$ be the simplicial complex defined by the pushout   \[\begin{tikzcd}       \mathrm{lk}_{(\underline{P},\underline{Q})^{*M}}(i) * L_i \arrow[d] \arrow[r]       &  \mathrm{lk}_{(\underline{P},\underline{Q})^{*M}}(i) * K_i \arrow[d] \\      (\underline{P},\underline{Q})^{*M} \setminus i * L_i \arrow[r]   & \overline{(\underline{P},\underline{Q})^{*M}}. \end{tikzcd}\] Then there is an identity of simplicial complexes  \[\kl^{*M} = \overline{(\underline{P},\underline{Q})^{*M}}\] \end{lem}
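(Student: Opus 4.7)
The plan is to recognize the simplicial complex $\overline{(\underline{P},\underline{Q})^{*M}}$ as the colimit of a pushout diagram that is literally the same diagram as the one Corollary \ref{pjppushout} exhibits as a pushout description of $\kl^{*M}$. Since pushouts of simplicial complexes along inclusions are uniquely determined (they are just unions glued along the common intersection), this will immediately give the desired equality.

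To do this, I would first invoke Lemma \ref{deleteeq}, which supplies the two identities
\[(\underline{P},\underline{Q})^{*M} \setminus i \;=\; \kl^{*M \setminus i} \qquad \text{and} \qquad \mathrm{lk}_{(\underline{P},\underline{Q})^{*M}}(i) \;=\; \kl^{*\mathrm{lk}_M(i)},\]
where $\mathrm{lk}_M(i)$ is viewed on the vertex set of $M \setminus i$. Substituting these into the defining pushout of $\overline{(\underline{P},\underline{Q})^{*M}}$ rewrites it as
\[\begin{tikzcd}
\kl^{*\mathrm{lk}_M(i)} * L_i \arrow[r] \arrow[d] & \kl^{*\mathrm{lk}_M(i)} * K_i \arrow[d] \\
\kl^{*M \setminus i} * L_i \arrow[r] & \overline{(\underline{P},\underline{Q})^{*M}},
\end{tikzcd}\]
which is exactly the pushout square of Corollary \ref{pjppushout} whose colimit is $\kl^{*M}$. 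Uniqueness of pushouts (applied within the ambient complex $K_1 * \cdots * K_m$) then yields $\kl^{*M} = \overline{(\underline{P},\underline{Q})^{*M}}$.

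The only matter to double check is the bookkeeping of vertex sets. In $(\underline{P},\underline{Q})^{*M}$ the $i$-th factor contributes only the single vertex $i$; in the defining pushout one then joins with $K_i$ or $L_i$ after deleting this vertex, and this introduces precisely the vertex set $[k_i]$ of $K_i$ in the same position where $\kl^{*M}$ does. I would make this explicit in one short paragraph to confirm that both pushouts are being formed over the same ambient vertex set and are indeed the same diagram, not merely isomorphic. I do not anticipate a real obstacle: the substantive combinatorial content has already been absorbed into Lemma \ref{deleteeq} and Corollary \ref{pjppushout}, and the present lemma is essentially an assembly step, laying the groundwork for the inductive construction of $\kl^{*M}$ via the filtration $M(0), M(1), \dots, M(m)$ that will be exploited in the next section.
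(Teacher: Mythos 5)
Your proposal is correct and follows the paper's own argument exactly: apply Lemma \ref{deleteeq} to rewrite the two left-hand corners, observe that the resulting square is the pushout of Corollary \ref{pjppushout} whose colimit is $\kl^{*M}$, and conclude by uniqueness of the pushout. The extra paragraph you flag on vertex-set bookkeeping is a sensible addition but not something the paper spells out.
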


\begin{proof} 
By Lemma \ref{deleteeq} we can rewrite $(\underline{P},\underline{Q})^{*M} \setminus i$ and $\mathrm{lk}_{(\underline{P},\underline{Q})^{*M}}(i)$. Thefore the pushout defining $ \overline{(\underline{P},\underline{Q})^{*M}}$ can be written as    \[\begin{tikzcd}       \kl^{*\mathrm{lk}_M(i)}* L_i \arrow[d] \arrow[r]       &  \kl^{*\mathrm{lk}_M(i)} * K_i\arrow[d] \\        \kl^{*M \setminus i} * L_i \arrow[r]       & \overline{(\underline{P},\underline{Q})^{*M}}.   \end{tikzcd}\]   By Lemma \ref{pjppushout}, $\overline{(\underline{P},\underline{Q})^{*M}} = \kl^{*M}$. \end{proof}

Recall that $M(i)$ is defined as polyhedral join product $(\underline{P^i},\underline{Q^i})^{*M}$ where the sequence $(\underline{P^i},\underline{Q^i})$ is defined as \[(P^i_j,Q^i_j) = \begin{cases} (K_j,L_j) & j \leq i\\ (\{j\},\emptyset) & j > i. \end{cases}\]By Lemma \ref{eq}, with $\kl = ((\underline{P^{i+1}},\underline{Q^{i+1}})$ and $(\underline{P},\underline{Q})= (\underline{P^i},\underline{Q^i})$, we obtain $M(i+1)$ from $M(i)$ via the  pushout 
\[\begin{tikzcd}
    \mathrm{lk}_{M(i)}(i+1) * L_{i+1} \arrow[r] \arrow[d] 
    & \mathrm{lk}_{M(i)}(i+1) * K_{i+1} \arrow[d] \\
    M(i) \setminus \{i+1\} * L_{i+1} \arrow[r]
    & M(i+1).
\end{tikzcd}\]

We therefore obtain the following result. 

\begin{lem}\label{seq}
    The polyhedral join product can be constructed sequentially. That is, given a simplicial complex $M$ on the vertex set $[m]$, we can obtain $\kl^{*M}$ via the following sequence 
    \[M \subset M(1) \subset M^2 \subset \cdots \subset M(m) = \kl^{*M}\]
    where at each point, $M(i+1)$ is obtained from $M(i)$ via the pushout 
    \[\begin{tikzcd}
    \mathrm{lk}_{M(i)}(i+1) * L_{i+1} \arrow[r] \arrow[d] 
    & \mathrm{lk}_{M(i)}(i+1) * K_{i+1} \arrow[d] \\
    M(i) \setminus \{i+1\} * L_{i+1} \arrow[r]
    & M(i+1).
    \end{tikzcd} \eqno  \]\qed
\end{lem}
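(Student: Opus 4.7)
The plan is to prove this by iterated application of Lemma \ref{eq}. First I would verify the endpoints of the sequence. At $i=0$, the sequence $(\underline{P^0},\underline{Q^0})$ assigns $(\{j\},\emptyset)$ to every coordinate, so for each $\sigma \in M$ the slice $(\underline{P^0},\underline{Q^0})^{*\sigma}$ reduces (using the convention that $\emptyset$ acts as the unit for the join in the polyhedral join setting) to the simplex $\sigma$ itself; taking the union over $\sigma \in M$ yields $M(0)=M$. At $i=m$, every coordinate of $(\underline{P^m},\underline{Q^m})$ is $(K_j,L_j)$, so $M(m) = (\underline{K},\underline{L})^{*M} = \kl^{*M}$ directly from the definition.

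For the inductive step producing $M(i+1)$ from $M(i)$, I would apply Lemma \ref{eq} with the following substitutions: the role of the pair sequence called $\kl$ in Lemma \ref{eq} is played by $(\underline{P^{i+1}},\underline{Q^{i+1}})$, the role of $(\underline{P},\underline{Q})$ in Lemma \ref{eq} is played by $(\underline{P^i},\underline{Q^i})$, and the distinguished vertex named $i$ in that lemma is taken to be our vertex $i+1$. To invoke Lemma \ref{eq} I need to check its hypothesis: the two sequences should coincide in every coordinate except the distinguished one, where $(\underline{P},\underline{Q})$ must be $(\{i+1\},\emptyset)$. Comparing the definitions of $(\underline{P^{i+1}},\underline{Q^{i+1}})$ and $(\underline{P^i},\underline{Q^i})$, they agree in coordinates $j \le i$ (both equal to $(K_j,L_j)$) and in coordinates $j > i+1$ (both equal to $(\{j\},\emptyset)$), while at coordinate $i+1$ the former is $(K_{i+1},L_{i+1})$ and the latter is $(\{i+1\},\emptyset)$. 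This matches the hypothesis of Lemma \ref{eq} exactly.

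Lemma \ref{eq} then delivers the pushout
\[\begin{tikzcd}
\mathrm{lk}_{M(i)}(i+1) * L_{i+1} \arrow[r] \arrow[d]
& \mathrm{lk}_{M(i)}(i+1) * K_{i+1} \arrow[d] \\
M(i) \setminus \{i+1\} * L_{i+1} \arrow[r]
& M(i+1),
\end{tikzcd}\]
where the bottom right vertex is $(\underline{P^{i+1}},\underline{Q^{i+1}})^{*M} = M(i+1)$. The containment $M(i) \subset M(i+1)$ (understood after identifying the single vertex $i+1$ of $M(i)$ with the standard vertex of the trivial complex $\{i+1\}$ sitting inside $K_{i+1}$ in $M(i+1)$) follows either from Lemma \ref{pjpsubcomplex} or can be read off the pushout itself. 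Iterating across $i=0,1,\dots,m-1$ assembles the claimed sequence $M \subset M(1) \subset \cdots \subset M(m) = \kl^{*M}$.

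There is no serious obstacle here: the proof is essentially bookkeeping, verifying that the sequences $(\underline{P^i},\underline{Q^i})$ and $(\underline{P^{i+1}},\underline{Q^{i+1}})$ are related in precisely the manner required by Lemma \ref{eq}. The only mild subtlety is keeping track of vertex sets at each stage, since $M(i)$ and $M(i+1)$ have different vertex sets — but this is already handled by the indexing convention set up immediately before the statement.
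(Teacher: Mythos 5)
Your proposal is correct and follows essentially the same route as the paper: the paper also obtains $M(i+1)$ from $M(i)$ by applying Lemma \ref{eq} with $\kl = (\underline{P^{i+1}},\underline{Q^{i+1}})$ and $(\underline{P},\underline{Q}) = (\underline{P^i},\underline{Q^i})$, distinguished vertex $i+1$, and then iterates over $i$. Your explicit verification of the hypothesis of Lemma \ref{eq} and of the endpoints $M(0)=M$, $M(m)=\kl^{*M}$ is slightly more detailed than the paper's treatment but not a different argument.
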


Specialising to the substitution complex, we give specific results concerning full subcomplexes. 

\begin{lem}\label{sub1}
    Let $K$ be a simplicial complex on $[m]$ vertices, and let $L$ be a full subcomplex of $K$ on $[l]$ vertices where $l \leq m$. Then $L(K_1,\cdots,K_l)$ is a full subcomplex of $K(K_1,\cdots,K_m)$.
\end{lem}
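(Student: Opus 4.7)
The plan is to derive this directly from Lemma \ref{pjpsubcomplex}, which already handles the general case of polyhedral join products. By Example \ref{pjpex}(1), both substitution complexes $L(K_1,\ldots,K_l)$ and $K(K_1,\ldots,K_m)$ are polyhedral join products on the common pair sequence $\kl = ((K_i,\emptyset))_{i=1}^{m}$: the first is $\kl^{*L}$ (viewing $L$ as a subcomplex of $K$ on the vertex set $[l]\subseteq[m]$) and the second is $\kl^{*K}$. Once this identification is set up, the hypothesis that $L$ is full in $K$ is exactly the hypothesis of Lemma \ref{pjpsubcomplex} with $M=K$ and $N=L$, and that lemma immediately produces the desired conclusion.

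The one bookkeeping step I would verify before invoking Lemma \ref{pjpsubcomplex} is the identification $\kl^{*L} = L(K_1,\ldots,K_l)$, since in principle $\kl^{*L}$ is a join involving all $m$ pairs. For $\sigma \in L \subseteq [l]$, the polyhedral join piece $\kl^{*\sigma} = \ast_{i=1}^{m} Y_i$ has $Y_i = K_i$ for $i\in\sigma$ and $Y_i = L_i = \emptyset$ for $i\notin\sigma$; joining with the empty simplicial complex is the identity, so $\kl^{*\sigma}$ reduces to $\ast_{i\in\sigma} K_i$. Taking the union over $\sigma\in L$ gives $\kl^{*L} = L(K_1,\ldots,K_l)$ with vertex set $[k_1] \sqcup \cdots \sqcup [k_l]$, and the same observation gives $\kl^{*K} = K(K_1,\ldots,K_m)$.

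I expect no serious obstacle here. The only conceptual point is making sure the "ghost" factors contributed by $L_i = \emptyset$ for $i>l$ truly add nothing (no vertices, no nontrivial faces), so that the specialization of Lemma \ref{pjpsubcomplex} really is the statement we want. If one preferred to bypass Lemma \ref{pjpsubcomplex} and argue directly, the same proof runs cleanly: given a face $\omega$ of $K(K_1,\ldots,K_m)$ whose vertices all lie in $[k_1]\sqcup\cdots\sqcup[k_l]$, write $\omega = \bigcup_{i\in\sigma}\tau_i$ for some $\sigma\in K$ and faces $\tau_i$ of $K_i$; the vertex constraint forces $\tau_i = \emptyset$ whenever $i\in\sigma$ and $i>l$; then $\sigma' := \sigma\cap[l]$ lies in $K$ (since $K$ is a simplicial complex and $\sigma'\subseteq\sigma$), and because $L$ is full in $K$ on $[l]$, $\sigma'\in L$. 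So $\omega = \bigcup_{i\in\sigma'}\tau_i \in (\underline{K},\underline{\emptyset})^{*\sigma'} \subseteq L(K_1,\ldots,K_l)$, as required.
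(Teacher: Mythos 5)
Your proposal is correct and follows the same route as the paper, whose entire proof is the observation that the statement follows from Lemma \ref{pjpsubcomplex} once the substitution complex is recognised as the polyhedral join product on the pairs $(K_i,\emptyset)$. Your extra bookkeeping step (checking that the empty factors $L_i=\emptyset$ contribute nothing, so that $\kl^{*L}=L(K_1,\ldots,K_l)$) is a worthwhile detail the paper leaves implicit, but it does not change the argument.
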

\begin{proof}
This follows from Lemma \ref{pjpsubcomplex}.
\end{proof}

\begin{corollary}\label{full1}
     Let $K(K_1,\cdots,K_n)$ be a substitution complex. Then each $K_i$ is a full subcomplex on its original vertex set. 
\end{corollary}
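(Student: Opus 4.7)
The plan is to apply Lemma \ref{sub1} to the single-vertex subcomplex $\{i\} \subset K$ and identify the resulting substitution complex with $K_i$ itself.

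First I would observe that for any vertex $i$ of $K$, the subcomplex consisting of just the vertex $\{i\}$ is a full subcomplex of $K$ on one vertex: indeed, the only simplices on the vertex set $\{i\}$ are $\emptyset$ and $\{i\}$, both of which lie in $\{i\}$. So the hypothesis of Lemma \ref{sub1} is satisfied with $L = \{i\}$ and $l=1$.

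Next I would compute the substitution complex $\{i\}(K_i)$ directly from the definition. Since the underlying simplicial complex $\{i\}$ has only the two simplices $\emptyset$ and $\{i\}$, the polyhedral join product on pairs $(K_i,\emptyset)$ unfolds as
\[\{i\}(K_i) = (\underline{K},\underline{\emptyset})^{*\{i\}} \cup (\underline{K},\underline{\emptyset})^{*\emptyset} = K_i \cup \emptyset = K_i,\]
since the $\sigma = \{i\}$ piece contributes $K_i$ and the $\sigma = \emptyset$ piece contributes $\emptyset$. Thus the substitution of $K_i$ into the single-vertex complex $\{i\}$ recovers $K_i$, viewed on its original vertex set $[k_i]$.

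Finally I would invoke Lemma \ref{sub1}: since $\{i\}$ is a full subcomplex of $K$, the substitution complex $\{i\}(K_i) = K_i$ is a full subcomplex of $K(K_1, \ldots, K_m)$ on the vertex set $[k_i]$. As this holds for every $i \in [m]$, the result follows. There is no real obstacle here; the content of the corollary is just a direct specialization of Lemma \ref{sub1}, and the only point that needs care is verifying that the substitution $\{i\}(K_i)$ collapses to $K_i$ itself, which is immediate from the definition of the polyhedral join product.
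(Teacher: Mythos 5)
Your proposal is correct and follows the same route as the paper: both observe that a single vertex is a full subcomplex and then apply Lemma \ref{sub1}, identifying $\{i\}(K_i)$ with $K_i$. The only difference is that you spell out the (immediate) computation $\{i\}(K_i) = K_i$ from the definition, which the paper states without elaboration.
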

\begin{proof}
    As each vertex is a full subcomplex, by Lemma \ref{sub1} $\{i\}(K_1,\cdots,K_m) = K_i$ is a full subcomplex of $K(K_1,\cdots,K_m)$. 
\end{proof}
\begin{lem}\label{full2}
   Let $K(K_1,\cdots,K_n)$ be a substitution complex. Then $K(K_1,\cdots,K_n)$ contains a full subcomplex $K'$, such that there is a simplicial equivalence $K'=K$.  
\end{lem}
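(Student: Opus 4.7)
The plan is to construct $K'$ by choosing one vertex out of each ingredient complex $K_i$ and taking the full subcomplex of $K(K_1,\dots,K_n)$ spanned by these choices. For each $i \in [n]$, pick an arbitrary vertex $v_i \in K_i$ (possible since each $K_i$ is assumed to be a simplicial complex on a nonempty vertex set). Let $V = \{v_1,\dots,v_n\}$ and let $K'$ be the full subcomplex of $K(K_1,\dots,K_n)$ on $V$.

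Next, I will show that $K' \cong K$ via the map $\phi : K \to K'$ sending $i \mapsto v_i$. The key observation comes from unwinding the definition of the substitution complex: every simplex of $K(K_1,\dots,K_n)$ has the form $\bigcup_{i \in \tau} \tau_i$ where $\tau \in K$ and $\tau_i$ is a (possibly empty) face of $K_i$. If such a simplex lies in $V$, then for each $i \in \tau$ we have $\tau_i \subseteq V \cap (\text{vertices of } K_i) = \{v_i\}$, so $\tau_i \subseteq \{v_i\}$. Hence the simplex equals $\{v_i : i \in \tau'\}$ for some $\tau' \subseteq \tau$, and since $\tau' \in K$ (as $K$ is closed under taking faces), the simplex is $\phi(\tau')$. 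Conversely, for each $\tau \in K$, taking $\tau_i = \{v_i\}$ for $i \in \tau$ shows that $\phi(\tau) = \{v_i : i \in \tau\}$ is a simplex of $K(K_1,\dots,K_n)$, and it lies in $V$, so it lies in $K'$.

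Combining these two directions gives a bijective correspondence between the simplices of $K$ and the simplices of $K'$ that preserves inclusions, so $\phi$ is a simplicial isomorphism. By construction $K'$ is a full subcomplex of $K(K_1,\dots,K_n)$, so this completes the proof.

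The only point that requires care is verifying that the intersection of $V$ with the vertex set of $K_i$ consists precisely of $\{v_i\}$. This is where the partition of the vertex set of the substitution complex (vertices of $K(K_1,\dots,K_n)$ are disjointly distributed among the $K_i$) is used essentially; without this disjointness the full subcomplex on $V$ could pick up extra simplices and the argument would break. Since this disjointness is built into the construction of the polyhedral join product, no further obstacle arises.
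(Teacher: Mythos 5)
Your proposal is correct and follows essentially the same route as the paper: choose one vertex $v_i$ from each $K_i$, take the full subcomplex on $\{v_1,\dots,v_n\}$, and check in both directions that its simplices correspond exactly to the simplices of $K$. Your handling of the case where some $\tau_i$ is empty (yielding a face $\tau'\subseteq\tau$, which lies in $K$ by closure under faces) is in fact slightly more careful than the paper's version of the fullness argument.
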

\begin{proof}
    Let $v_i$ denote a vertex belonging to $K_i$. Consider the full subcomplex on the vertex set $\{v_1,\cdots,v_n\} \in K(K_1,\cdots,K_n)$. We will show that this is isomorphic to a copy of $K$. Let $\sigma = (j_1,\cdots,j_k)$ be a simplex of $K$. By definition of the substitution operation, $\sigma_{j_1} \cup \cdots \cup \sigma_{j_k} \in K(K_1,\cdots,K_n)$ for all $\sigma_{j_i} \in K_{j_i}$. Letting each $\sigma_{j_i} = v_{j_i} \in K_{j_i}$, we have that $(v_{j_1},\cdots,v_{j_k}) \in K(K_1,\cdots,K_n)$. Therefore, for every simplex $\sigma \in K$, we have a copy $\sigma' = (v_{j_1},\cdots,v_{j_k}) \in K(K_1,\cdots,K_n)$, and so a copy of $K$ is contained in $ K(K_1,\cdots,K_n)$. Denote this copy $K'$. We now show $K'$ is a full subcomplex of $K(K_1,\cdots,K_n)$. Assume that $K'$ is not a full subcomplex, so there is a face $\tau'$ on some subset of the vertex set $\{v_1,\cdots,v_n\}$  that is is not a face of $K'$ but is a face of $K(K_1,\cdots,K_n)$. Let $\tau' = (v_{j_1},\cdots,v_{j_k}) \in K(K_1,\cdots,K_n)$. By definition of substitution, $\tau = (j_1,\cdots,j_k)$ is a simplex in $K$. But then as there is a copy of every simplex in $K$ replicated on the vertex set $\{v_1,\cdots,v_n\}$, we have that $\tau'$ must be a face on the vertex set $\{v_1,\cdots,v_n\}$, and so $K'$ is a full subcomplex of $K(K_1,\cdots,K_n)$. 
\end{proof}
In general, the polyhedral join product does not contain a copy of the original simplex $M$ as a full subcomplex. This property is unique to the substitution complex. 

\section{A generalised homotopy equivalence for loop spaces of certain polyhedral products}

In this section, we generalise results from \cite{soton467338}. 
Let $M$ be a simplicial complex on $[m]$, and let $N$ be a subcomplex of $M$. Let $K$ be a simplicial complex on $[k]$, and let $L$ be a subcomplex of $K$. Consider $N$ and $L$ to be on the vertex sets of $M$ and $K$ respectively. Define $Q$ to be the pushout
     \begin{equation}\label{pushout}
         \begin{tikzcd}
         N*L \arrow[r] \arrow[d]
         & N*K \arrow[d] \\
         M*L \arrow[r]
         & Q. 
     \end{tikzcd}\end{equation}
The simplicial complex $Q$ is considered to be on the vertex set $[m+k]$. Applying Lemma \ref{join} and Lemma \ref{funda2}  gives the following result
 \begin{lem}
    There exists a pushout 
    \[\begin{tikzcd}
         \uxa^N \times \uxa^L \arrow[r] \arrow[d]
         & \uxa^N \times \uxa^K \arrow[d] \\
         \uxa^M \times \uxa^L \arrow[r]
         & \uxa^{Q}. 
     \end{tikzcd} \eqno  \] \qed
 \end{lem}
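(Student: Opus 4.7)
The plan is to apply the two fundamental tools from Section 2 in sequence: Proposition \ref{funda2}, which converts a pushout of simplicial complexes into a pushout of polyhedral products, and Lemma \ref{join}, which identifies a polyhedral product over a join as a Cartesian product of polyhedral products over the join factors.

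First I would view each of $N*L$, $N*K$, $M*L$, and $Q$ as simplicial complexes on the common vertex set $[m+k]$, where $N$ sits inside $[m]$ and $L$ sits inside $[k]$ (with ghost vertices allowed). The diagram (\ref{pushout}) is then a pushout of simplicial complexes on $[m+k]$, so Proposition \ref{funda2} applies and yields a pushout of polyhedral products
\[\begin{tikzcd}
\uxa^{N*L} \arrow[r] \arrow[d] & \uxa^{N*K} \arrow[d] \\
\uxa^{M*L} \arrow[r] & \uxa^{Q}.
\end{tikzcd}\]

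Second, I would rewrite the three upper-left corners using Lemma \ref{join}. Because $M,N$ lie on $[m]$ and $K,L$ lie on $[k]$, the vertex sets of the join factors are disjoint, so Lemma \ref{join} yields
\[\uxa^{N*L} = \uxa^{N}\times\uxa^{L},\quad \uxa^{N*K} = \uxa^{N}\times\uxa^{K},\quad \uxa^{M*L} = \uxa^{M}\times\uxa^{L}.\]
Substituting these identifications into the pushout diagram produces precisely the claimed pushout, and the bottom right corner $\uxa^{Q}$ is unchanged.

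There is no real obstacle here; both ingredients are stated exactly in the form needed. The only small point to verify is that Lemma \ref{join} is being invoked consistently with the ghost-vertex convention used by Proposition \ref{funda2}, which is immediate from Corollary \ref{conofl} (ghost vertices contribute $A_i$ factors on either side of the identification $\uxa^{K_1*K_2}=\uxa^{K_1}\times\uxa^{K_2}$). Hence the two lemmas splice together to give the result without any further computation.
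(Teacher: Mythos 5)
Your proposal is correct and matches the paper's argument exactly: the paper likewise obtains the pushout by applying Proposition \ref{funda2} to the simplicial pushout defining $Q$ and then rewriting the three corners $\uxa^{N*L}$, $\uxa^{N*K}$, $\uxa^{M*L}$ via Lemma \ref{join}. Your ghost-vertex caveat is harmless but unnecessary here, since the paper already stipulates that $N$ and $L$ are regarded on the full vertex sets of $M$ and $K$ respectively.
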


There are inclusion maps from $M*L$ and $N*K$ into $M*K$, and the inclusions $M*K$ coincide on $N*L$. Therefore there is a pushout map $Q \to M*K$. This induces the following pushout map on topological spaces:
\begin{equation}\label{baseofcube}
    \begin{tikzcd}
	{\uxa^N \times \uxa^L} & {\uxa^N \times \uxa^K} \\
	{\uxa^M \times \uxa^L} & {\uxa^{Q}} \\
	&& {\uxa^M \times \uxa^K. }
	\arrow[from=1-1, to=1-2]
	\arrow[from=1-1, to=2-1]
	\arrow[from=2-1, to=2-2]
	\arrow[from=1-2, to=2-2]
	\arrow[curve={height=-18pt}, from=1-2, to=3-3]
	\arrow[curve={height=18pt}, from=2-1, to=3-3]
	\arrow[dashed, from=2-2, to=3-3,"\Theta"]
\end{tikzcd}\end{equation}

The maps $N * K \to M * K$ and $M*L \to M*K$ are both the join of an inclusion and the identity, and so on the level of polyhedral products, they induce the maps\[\uxa^{N} \times \uxa^{K} \xrightarrow{g \times 1} \uxa^{M} \times \uxa^K\]and \[\uxa^{M} \times \uxa^L \xrightarrow{1 \times h} \uxa^{M} \times \uxa^L.\] If all four corners of the pushout (\ref{baseofcube}) are included into $\uxa^{M} \times \uxa^K$, we obtain homotopy fibrations 
\[F \to \uxa^{Q} \xrightarrow{\Theta} \uxa^{M} \times  \uxa^K\]
\[G \to \uxa^{N} \times \uxa^K  \xrightarrow{g\times 1} \uxa^{M} \times \uxa^K\]
\[H \to \uxa^{M} \times \uxa^L \xrightarrow{1 \times h} \uxa^{M} \times \uxa^K\]
\[G \times H \to \uxa^{N} \times \uxa^L \xrightarrow{h \times i} \uxa^{M} \times \uxa^K\]
 where $F$ is the homotopy fibre of $\Theta$, $G$ is the homotopy fibre of the map $g$, and $H$ is the homotopy fibre of $h$. As the homotopy fibres are the result of mapping each corner of the pushout into a common base, we obtain the homotopy commutative cube 
 \begin{equation}
    \begin{tikzcd}[row sep=1.5em, column sep = 1.5em]
    G \times H \arrow[rr] \arrow[dr] \arrow[dd] &&
    G \arrow[dd] \arrow[dr] \\
    & H \arrow[rr] \arrow[dd]&&
    F \arrow[dd] \\ 
    \uxa^{N} \times  \uxa^L \arrow[rr,] \arrow[dr] && \uxa^{N} \times \uxa^{K} \arrow[dr] \\
    & \uxa^{M} \times \uxa^L \arrow[rr] && \uxa^{Q}
    \end{tikzcd}
 \end{equation}

in which all four sides are homotopy pullbacks. Since the bottom face is a homotopy pushout, by Lemma \ref{cube}, the top face is a homotopy pushout. 

\begin{lem}\label{jfib}
    The maps $G \times H \to G$ and $G \times H \to H$ in (2) can be chosen to be projections, implying that there is a homotopy equivalence $F \simeq G*H$. 
\end{lem}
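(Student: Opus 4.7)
The plan is to identify the two top-face maps $G \times H \to G$ and $G \times H \to H$ in the cube with the canonical projections $\pi_1$ and $\pi_2$, and then invoke the classical fact that the homotopy pushout of the diagram $G \xleftarrow{\pi_1} G \times H \xrightarrow{\pi_2} H$ is by definition the join $G * H$. Combined with Lemma \ref{cube}, which already tells us that the top face is a homotopy pushout with value $F$, this immediately gives $F \simeq G * H$.

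To carry this out, I first fix a basepoint $(*_M, *_K) \in \uxa^M \times \uxa^K$ compatible with the basepoints of the two factors, and work with strict fibres as strict models for the relevant fibrations. The crucial observation is that all three maps $g \times 1$, $1 \times h$, and $g \times h$ are \emph{product} maps, so their strict fibres over $(*_M, *_K)$ split as products: the fibre of $g \times 1$ is $g^{-1}(*_M) \times \{*_K\}$, a model for $G$; the fibre of $1 \times h$ is $\{*_M\} \times h^{-1}(*_K)$, a model for $H$; and the fibre of $g \times h$ is $g^{-1}(*_M) \times h^{-1}(*_K)$, a model for $G \times H$.

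Next I inspect the induced maps between fibres. The bottom-face map $\uxa^N \times \uxa^L \to \uxa^N \times \uxa^K$ is $1 \times h$, so restricted to $g^{-1}(*_M) \times h^{-1}(*_K)$ it sends $(n,l)$ to $(n, h(l)) = (n, *_K)$, landing in $g^{-1}(*_M) \times \{*_K\}$. This is exactly the first projection $\pi_1 \colon G \times H \to G$. The symmetric argument applied to $g \times 1 \colon \uxa^N \times \uxa^L \to \uxa^M \times \uxa^L$ shows that the other top-face map $G \times H \to H$ is the second projection $\pi_2$. With both maps identified as projections, the top face is precisely the homotopy pushout defining the join, so $F \simeq G * H$.

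The only potential obstacle is the technical worry that strict fibres of the chosen representatives might not model the correct homotopy fibre types, or that the identification of the top-face maps is only valid up to unspecified homotopies. This is resolved by the fact that the diagram in question consists entirely of product maps: strict fibres of product maps are products of strict fibres, and one may replace the fibrations by their homotopy-equivalent Moore path-space models without disturbing the product structure. Thus, no delicate coherence issues arise, and the identification of the fibre maps as projections is honest rather than merely up-to-homotopy.
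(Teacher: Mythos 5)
Your argument is correct and is essentially the paper's own proof: both hinge on the observation that $g\times 1$, $1\times h$ and $g\times h$ are product maps over the common base $\uxa^{M}\times\uxa^{K}$, so the fibres split as products and the induced maps between fibres are the two coordinate projections, whence the top face of the cube is the standard join pushout $G \leftarrow G\times H \rightarrow H$ and $F\simeq G*H$. The paper phrases this by writing the relevant pullback square as a product of the fibration diagrams for the left and right factors, while you verify it pointwise on strict (Moore path-space) fibres; these are the same idea in different clothing.
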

\begin{proof}
    Consider the following diagram
\[\begin{tikzcd}
	{G \times H} & {\uxa^{N} \times \uxa^L } & {\uxa^{M} \times \uxa^K} \\
	{* \times H} & {\uxa^{M} \times \uxa^L} & {\uxa^{M}\times \uxa^K.}
	\arrow[from=1-1, to=1-2]
	\arrow["{g \times h}", from=1-2, to=1-3]
	\arrow["{* \times 1}",from=1-1, to=2-1]
	\arrow[from=2-1, to=2-2]
	\arrow["{1 \times h}", from=2-2, to=2-3]
	\arrow[Rightarrow, no head, from=1-3, to=2-3]
	\arrow["{g \times 1}", from=1-2, to=2-2]
\end{tikzcd}\]
As the fibrations are taken over the common  base $\uxa^M \times \uxa^K$, the left square is the homotopy pullback appearing in the rear face of the cube. This diagram is the product of the fibration diagram for the factors on the left and the fibration diagram for the factors on the right. Observe $* \times 1$ is the projection of the righthand factor. Thus the map $G \times H \to G$ is homotopic to the projection. The argument for the map $G \times H \to G$ being a projection is similar. It is well known that the homotopy pushout of projections is a join, and so the homotopy pushout in the top face of the cube implies that $F \simeq G*H$. 
\end{proof}

\begin{thm}\label{the1}
    Let $Q$ be the pushout \ref{pushout}. There is a homotopy fibration \[G*H\to \uxa^{Q} \to \uxa^{M} \times \uxa^K\] where $G$ is the homotopy fibre of $\uxa^N \to \uxa^{M}$ and $H$ is the homotopy fibre of $\uxa^L \to \uxa^K$. This fibration splits after looping, giving a homotopy equivalence \[\Omega\uxa^{Q} \simeq \Omega\uxa^{M} \times \Omega \uxa^{K} \times \Omega(G*H).\]
\end{thm}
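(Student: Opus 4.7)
My plan is that the first statement is essentially already in hand from the construction preceding the theorem. Specifically, $F$ is the homotopy fibre of $\Theta : \uxa^Q \to \uxa^M \times \uxa^K$ by definition, and Lemma~\ref{jfib} identifies $F \simeq G * H$. Together these give the homotopy fibration $G * H \to \uxa^Q \to \uxa^M \times \uxa^K$. The real work, therefore, is in exhibiting a right homotopy inverse to $\Omega\Theta$, since a homotopy fibration $F' \to E \to B$ whose base map admits a section after looping splits in the form $\Omega E \simeq \Omega B \times \Omega F'$.

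To build the section, I would use the simplicial inclusions $N * K \hookrightarrow Q$ and $M * L \hookrightarrow Q$ from the pushout (\ref{pushout}). These induce maps $\uxa^N \times \uxa^K \to \uxa^Q$ and $\uxa^M \times \uxa^L \to \uxa^Q$. Precomposing with the basepoint inclusions $\uxa^K \hookrightarrow \uxa^N \times \uxa^K$, $x \mapsto (*,x)$, and $\uxa^M \hookrightarrow \uxa^M \times \uxa^L$, $y \mapsto (y,*)$, yields maps
\[
s_K \colon \uxa^K \to \uxa^Q, \qquad s_M \colon \uxa^M \to \uxa^Q.
\]
Tracing through the diagram (\ref{baseofcube}), on the image of $\uxa^N \times \uxa^K$ the map $\Theta$ agrees with $g \times 1$, so $\Theta \circ s_K (x) = (g(*),x) = (*,x)$; symmetrically $\Theta \circ s_M(y) = (y,*)$. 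Hence $\Theta \circ s_M$ and $\Theta \circ s_K$ are the standard inclusions $\iota_M$ and $\iota_K$ of the two factors of $\uxa^M \times \uxa^K$.

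Finally, I would combine the loops of these sections using the $H$-space multiplication $\mu$ on $\Omega \uxa^Q$ to form
\[
\sigma \colon \Omega\uxa^M \times \Omega\uxa^K \xrightarrow{\Omega s_M \times \Omega s_K} \Omega\uxa^Q \times \Omega\uxa^Q \xrightarrow{\mu} \Omega\uxa^Q.
\]
Since $\Omega\Theta$ is an $H$-map with respect to the loop multiplications, $\Omega\Theta \circ \sigma(\alpha,\beta)$ is homotopic to $\Omega\iota_M(\alpha) \cdot \Omega\iota_K(\beta)$ in $\Omega(\uxa^M \times \uxa^K) \simeq \Omega\uxa^M \times \Omega\uxa^K$. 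Under this product identification, $\Omega\iota_M(\alpha)$ has trivial $\uxa^K$-coordinate and $\Omega\iota_K(\beta)$ has trivial $\uxa^M$-coordinate, so their loop product is homotopic to $(\alpha,\beta)$. Thus $\sigma$ is a right homotopy inverse to $\Omega\Theta$, the looped fibration splits, and we obtain the desired decomposition $\Omega\uxa^Q \simeq \Omega\uxa^M \times \Omega\uxa^K \times \Omega(G*H)$. The only subtlety worth watching is the bookkeeping that identifies $\Theta \circ s_M$ and $\Theta \circ s_K$ with the canonical factor inclusions; once this is in place, no deeper obstacle appears.
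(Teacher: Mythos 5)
Your proposal is correct and follows essentially the same route as the paper: both reduce the fibration statement to Lemma~\ref{jfib} and then observe that the composites from $\uxa^{M}$ and $\uxa^{K}$ through $\uxa^{Q}$ into $\uxa^{M}\times\uxa^{K}$ are the canonical factor inclusions. The only cosmetic difference is that the paper packages the section as the wedge $\uxa^{M}\vee\uxa^{K}\to\uxa^{Q}\to\uxa^{M}\times\uxa^{K}$ and quotes the standard fact that the inclusion of a wedge into a product has a right homotopy inverse after looping, whereas you write out that standard fact explicitly via the loop multiplication.
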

\begin{proof}
    If $F$ is the homotopy fibre of the map $\Theta: \uxa^{Q} \to \uxa^M \times \uxa^K$, then by Lemma~\ref{jfib}, $F \simeq G*H$. This establishes the asserted homotopy fibration. For the splitting, consider the composite $\uxa^{M} \times\uxa^{L} \to \uxa^{Q} \to \uxa^{M} \times \uxa^{K}$ which is $1 \times h$. Restricting to $\uxa^{M}$ gives inclusion into the left factor. Similarly, restricting the composite $\uxa^N \times \uxa^K \to \uxa^{Q} \to \uxa^M \times \uxa^K$ to $\uxa^K$ gives inclusion into the right factor. Taking the wedge sum of these therefore gives the composite \[\uxa^{M} \vee \uxa^K \to \uxa^{Q} \to \uxa^{M} \times \uxa^K\] which is the inclusion of the wedge into the product. The inclusion of the wedge into the product has a right homotopy inverse after looping, so the map $\Omega\uxa^{Q} \to \Omega\uxa^{M} \times \Omega\uxa^K$ has a right homotopy inverse, implying that the fibration splits after looping. 
\end{proof}
\section{Loop spaces of polyhedral products associated with polyhedral join products}

In this section, we apply Theorem \ref{the1} to the polyhedral join product. By Corollary \ref{pjppushout} there exists a pushout

 \[\begin{tikzcd}
        \kl^{*\mathrm{lk}_M(m)} * L_m \arrow[r] \arrow[d]
        & \kl^{*\mathrm{lk}_M(m)} *K_m \arrow[d] \\
        \kl^{*M \setminus m} * L_m \arrow[r]
        & \kl^{*M}.
    \end{tikzcd}\]

This is a special case of the pushout in \ref{pushout}, where $N = \kl^{*\mathrm{lk}_M(m)}$, $L = L_m$, $M = \kl^{*M \setminus m}$ and $K = K_m$. Applying Theorem \ref{the1}, we obtain a homotopy equivalence 

\[\Omega \uxa^{\kl^{*M}} \simeq \Omega \uxa^{\kl^{*M \setminus m}} \times \Omega \uxa^{K_m} \times \Omega(G_m*H_m),\] where $G_m$ is the homotopy fibre of the map $\uxa^{\kl^{*\mathrm{lk}_M(m)}} \to \uxa^{\kl^{*M \setminus m}}$ and $H_m$ is the homotopy fibre of the map $\uxa^{L_m} \to \uxa^{K_m}$. By iterating this homotopy equivalence with respect to the vertices of $M$, we prove Theorem \ref{mainthrm1}.

\begin{proof}[Proof of Theorem \ref{mainthrm1}]
    By Theorem \ref{the1}, there is a homotopy equivalence \[\Omega \uxa^{\kl^{*M}} \simeq \Omega \uxa^{\kl^{*M \setminus m}} \times \Omega \uxa^{K_m} \times \Omega(G_m*H_m).\] Now consider $\kl^{*M \setminus m}$. If $M \setminus m$ is a simplex, then by Example \ref{pjpex}, $\kl^{*M \setminus m} = K_1*\cdots*K_{m-1}$, and so $\uxa^{\kl^{*M \setminus m}} \simeq \uxa^{K_1} \times \cdots \times \uxa^{K_m}$, and the result follows. If not, the simplicial complex $\kl^{*M \setminus m}$ is a polyhedral join product, and therefore can be constructed as in \ref{pushout}, with $N = \kl^{*\mathrm{lk}_{*M\setminus m}}$, $L = L_{m-1}$, $M = \kl^{* M \setminus \{m,m-1\}}$ and $K = K_{m-1}$. We can apply Theorem \ref{the1} to $\uxa^{\kl^{*M \setminus m}}$ to obtain a homotopy equivalence \[\Omega \uxa^{\kl^{*M \setminus m}} \simeq \Omega \uxa^{\kl^{*M \setminus \{m,m-1\}}} \times \Omega \uxa^{K_{m-1}} \times \Omega(G_{m-1}*H_{m-1}).\] Substituting this into our previous expression gives a homotopy equivalence
     \[\Omega \uxa^{\kl^{*M}} \simeq \Omega \uxa^{\kl^{*M \setminus \{m,m-1\}}} \times \prod_{i=m-1}^m \Omega \uxa^{K_{i}} \times \prod_{j=m+1}^m \Omega(G_{j}*H_{j}).\]
     We continue this process on $\uxa^{\kl^{*M \setminus \{m,m-1\}}}$ until we reach the vertex $k$ such that $M \setminus \{m,\cdots,k\}$ is a simplex.
\end{proof}

\section{Simplicial operations preserving the property of being in \cal{P}}

Recall that $\cal{P}$ is the full subcategory of topological spaces that are homotopy equivalent to a product of spheres and loops on spheres. Restricting to the case where $\uxa = \caa$, let $M$ be a simplicial complex such that $\Omega \caa^M \in \cal{P}$. The polyhedral join product $\kl^{*M}$ can be considered as an operation on simplicial complexes, and as such, it is natural to ask when such an operation preserves the property of having a loop space on an associated polyhedral product in $\cal{P}$. In this section we give some general conditions that ensure $\Omega\caa^{\kl^{*M}} \in \cal{P}$. We then determine that for the substitution complex and the composition complex, we can guarantee some of these conditions will always be met. 
\begin{thm}\label{pjppreserve}
    Let $M$ be a simplicial complex on the vertex set $[m]$, and let $K_i$ be a simplicial complex on the vertex set $[k_i]$. Let $\caa$ be a sequence of pairs $(CA_j,A_j)$ where $j \in [m -1 + k_i]$. Suppose $\Sigma A_j \in \cal{W}$ for all  $j \in [m -1 + k_i]$. Suppose $\Omega \caa^M \in \cal{P}$, and let $Q$ be the simplicial complex defined by the following pushout 
    \[\begin{tikzcd}
        \mathrm{lk}_M(i) * L_i \arrow[r] \arrow[d]
        &  \mathrm{lk}_M(i) * K_i \arrow[d] \\
        M \setminus i * L_i \arrow[r] 
        & Q. 
    \end{tikzcd}\]
    Let $H_i$ be the homotopy fibre of the map $\caa^{L_i} \to \caa^{K_i}$. If $\Omega \caa^{K_i} \in \cal{P}$ and $\Sigma H_i \in \cal{W}$ then $\Omega \caa^{Q} \in \cal{P}$.
\end{thm}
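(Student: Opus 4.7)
The plan is to apply Theorem~\ref{the1} directly to the defining pushout of $Q$, obtaining a three-factor loop space decomposition of $\caa^Q$, and then verify each factor lies in $\cal{P}$. Setting $N = \mathrm{lk}_M(i)$, $L = L_i$, and taking the roles of ``$M$'' and ``$K$'' in Theorem~\ref{the1} to be $M\setminus i$ and $K_i$ respectively, the pushout in the statement matches~(\ref{pushout}), so Theorem~\ref{the1} yields
\[\Omega \caa^Q \;\simeq\; \Omega \caa^{M \setminus i}\times \Omega \caa^{K_i}\times \Omega(G \ast H_i),\]
where $G$ is the homotopy fibre of $\caa^{\mathrm{lk}_M(i)}\to \caa^{M \setminus i}$.

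Two of these factors are handled immediately. Since $M\setminus i$ is the full subcomplex of $M$ on $[m]\setminus\{i\}$, the unnamed lemma immediately following Lemma~\ref{retract1} gives $\Omega\caa^{M\setminus i}\in\cal{P}$ from the hypothesis $\Omega\caa^M\in\cal{P}$, and $\Omega\caa^{K_i}\in\cal{P}$ is assumed. Thus $\Omega\caa^Q$ will lie in $\cal{P}$ provided the third factor $\Omega(G\ast H_i)$ does.

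The main obstacle is therefore to show $\Omega(G\ast H_i)\in\cal{P}$, which by Lemma~\ref{loopinp} reduces to proving $G\ast H_i\in\cal{W}$. Using the identification $G\ast H_i\simeq G\wedge \Sigma H_i$ together with $\Sigma H_i\simeq \bigvee_\beta S^{j_\beta}$ (valid by hypothesis, with each $j_\beta\geq 2$ since spheres in $\cal{W}$ are simply connected), one obtains
\[G\ast H_i \;\simeq\; \bigvee_\beta \Sigma^{j_\beta} G.\]
I would then apply Lemma~\ref{minsphere} to $M$ at the vertex $i$: the hypotheses $\Omega\caa^M\in\cal{P}$ and $\Sigma A_i\in\cal{W}$ are in place, and $A_i$ must be path-connected since $\Sigma A_i$, being a wedge of simply connected spheres, is itself simply connected. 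Lemma~\ref{minsphere} (with $k=1$) then yields $\Sigma G\in\cal{W}$. Writing $\Sigma G\simeq \bigvee_\gamma S^{n_\gamma}$ with each $n_\gamma\geq 2$, the identification $\Sigma^{j_\beta}G\simeq \bigvee_\gamma S^{n_\gamma + j_\beta - 1}$ shows that every wedge summand of $G\ast H_i$ lies in $\cal{W}$, hence so does $G\ast H_i$ itself. Combining this with the two routine factors, $\Omega\caa^Q$ is exhibited as a product of three spaces in $\cal{P}$, and so lies in $\cal{P}$.
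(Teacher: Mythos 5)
Your overall strategy is the same as the paper's: apply Theorem~\ref{the1} to the defining pushout, dispose of the factors $\Omega\caa^{M\setminus i}$ and $\Omega\caa^{K_i}$ by retraction and by hypothesis respectively, and reduce everything to showing $G\ast H_i\in\cal{W}$ by smashing $G$ against the wedge of spheres $\Sigma H_i$. The first two factors are fine (your route through the full subcomplex $M\setminus i$ and Lemma~\ref{retract1} is a legitimate alternative to the paper's use of Lemma~\ref{fibsplit1}).

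The gap is in the third factor, where you invoke Lemma~\ref{minsphere} with $k=1$ to conclude $\Sigma G\in\cal{W}$. The $k=1$ instance is exactly where that lemma's statement outruns its proof: the retraction argument inside Lemma~\ref{minsphere} only shows that $\Sigma^{j}G$ retracts off $G\wedge\Sigma A_i$ for those $j$ with $S^j$ actually appearing in the wedge decomposition of $\Sigma A_i$, and every such $j$ is at least $\mathrm{conn}(\Sigma A_i)+1\geq 2$. Passing from $\Sigma^{j_0}G\in\cal{W}$ to $\Sigma G\in\cal{W}$ is a desuspension, which fails in general (a space can become a wedge of spheres only after further suspension). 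So what you actually have is $\Sigma^{j}G\in\cal{W}$ only for $j$ at least the dimension $j_0$ of the bottom sphere of $\Sigma A_i$, and $j_0$ can be arbitrarily large if $A_i$ is highly connected. Your final step, $\Sigma^{j_\beta}G\simeq\Sigma^{j_\beta-1}(\Sigma G)$ for each sphere $S^{j_\beta}$ of $\Sigma H_i$, therefore needs $j_\beta\geq j_0$ for every $\beta$, i.e.\ that $\Sigma H_i$ is at least as connected as $\Sigma A_i$. This is precisely the step the paper supplies via Lemma~\ref{connected1} and which your argument omits. The fix is to replace the appeal to Lemma~\ref{minsphere} at $k=1$ by: (i) take $k-1=\mathrm{conn}(A_i)$, so the retraction argument gives $\Sigma^{j}G\in\cal{W}$ for all $j\geq j_0=k+1$; and (ii) cite Lemma~\ref{connected1} to guarantee every sphere of $\Sigma H_i$ has dimension at least $k+1$, so that each summand $\Sigma^{j_\beta}G$ of $G\ast H_i$ is a suspension of $\Sigma^{j_0}G$ and hence lies in $\cal{W}$.
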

\begin{proof}
    Applying Lemma \ref{fibsplit1} to $\caa^M$ gives $\Omega \caa^{M} \simeq \Omega \caa^{M \setminus i} \times \Omega (\Sigma G_i \wedge A_i)$. As $\Omega \caa^M \in \cal{P}$, and both $\Omega \caa^{M\setminus i}$ and $\Omega (\Sigma G_i \wedge A_i)$ retract off $\Omega \caa^M$ by Lemma \ref{retractinp} both $\Omega \caa^{M\setminus i}$ and $\Omega (\Sigma G_i \wedge A_i)$ are in $\cal{P}$. By Lemma \ref{1.3}, $\Sigma (G_i \wedge A_i) \in \cal{W}$. Now consider $\caa^{Q}$. Applying Theorem \ref{the1} gives \[\Omega \caa^{Q} \simeq \Omega \caa^{M \setminus i} \times \Omega\caa^{K_i} \times \Omega(\Sigma G_i \wedge H_i).\] As $\Sigma H_i \in \cal{W}$, and $\Sigma H_i$ is at least as connected as $\Sigma A_i$ by Lemma \ref{connected1}, $\Sigma G_i \wedge H_i \in \cal{W}$. Therefore $\Omega\Sigma (G_i \wedge H_i) \in \cal{P}$ by Lemma \ref{loopinp}. Hence each element on the right hand side of the homotopy equivalence is in $\cal{P}$, and therefore $\Omega \caa^{Q} \in \cal{P}$.   
\end{proof}

Recall that by Lemma \ref{seq}, the polyhedral join product can be constructed sequentially via an iterated pushout construction. More specifically, there is a sequence of simplicial complexes \[M \subset M(1) \subset M(2) \subset \cdots \subset M(m) = \kl^{*M}\]
    where at each point, $M(i+1)$ is obtained from $M(i)$ via the pushout 
    \[\begin{tikzcd}
    \mathrm{lk}_{M(i)}(i+1) * L_{i+1} \arrow[r] \arrow[d] 
    & \mathrm{lk}_{M(i)}(i+1) * K_{i+1} \arrow[d] \\
    M(i) \setminus \{i+1\} * L_{i+1} \arrow[r]
    & M(i+1).
    \end{tikzcd}\] This, combined with Theorem \ref{pjppreserve}, allows us to prove Theorem \ref{mainthrm2}. 

\begin{proof}[Proof of Theorem \ref{mainthrm2}]
    We proceed by induction on $M(i)$. The base case is $M(1)$. The simplicial complex $M(1)$ is defined as the pushout
    \[\begin{tikzcd}
        \mathrm{lk}_M(1) * L_1 \arrow[r] \arrow[d]
        &  \mathrm{lk}_M(1) * K_1 \arrow[d] \\
        M \setminus 1 * L_1 \arrow[r] 
        & M(1).
    \end{tikzcd}\] By assumption  $\Omega \caa^{K_1} \in \cal{P}$, and $\Sigma H_1 \in \cal{W}$ and so by Theorem \ref{pjppreserve}, we obtain $\Omega \caa^{M(1)} \in \cal{P}$. Now suppose that for all $j < i$, $\Omega \caa^{M(i)} \in \cal{P}$. Consider $M(i)$. This simplicial complex is defined by the pushout 
    \[\begin{tikzcd}
    \mathrm{lk}_{M(i-1)}(i) * L_{i} \arrow[r] \arrow[d] 
    & \mathrm{lk}_{M(i-1)}(i) * K_{i} \arrow[d] \\
    M(i-1) \setminus i * L_{i} \arrow[r]
    & M(i+1).
    \end{tikzcd}\]
     By induction, $\Omega \caa^{M(i-1)} \in \cal{P}$, and by assumption, $\Omega \caa^{K_i} \in \cal{P}$ and $\Sigma H_i \in \cal{W}$. Therefore we can apply Theorem \ref{pjppreserve}, with $M = M(i-1)$ and $i=m$ to obtain $\Omega \caa^{M(i)} \in \cal{P}$. Therefore $\Omega \caa^{M(i)} \in \cal{P}$ for all $i \in [m]$ by induction. When $i=m$ by definition $M(m) = \kl^{*M}$, and so the result holds. 
\end{proof}
Theorem \ref{mainthrm2} tells us that given $M$ such that $\Omega \caa^{M} \in \cal{P}$, we can construct a new simplicial complex $\kl^{*M}$ such that $\Omega \caa^{\kl^{*M}} \in \cal{P}$, subject to conditions on $K_i$ and the homotopy fibre $H_i$. Guaranteeing $\Sigma H_i \in W$ is not so simple. We can give conditions on the map $\caa^{L_i} \to \caa^{K_i}$ to ensure $\Sigma H_i \in \cal{W}$.   
\begin{lem}\label{nullmap}
     Let $M$ be a simplicial complex on $[m]$ vertices. Let $\kl^{*M}$ be a polyhedral join product and let $K_i$ be a simplicial complex on the vertex set $[k_i]$ for $i \in [m]$. Let $\caa$ be a sequence of pairs $(CA_j,A_j)$ where $1 \leq j \leq k_1 + \cdots + k_m$. Suppose the pairs $\kl$ are such that the map $\caa^{L_i} \to \caa^{K_i}$ is null homotopic. If $\Sigma \caa^{L_i} \in \cal{W}$ and $\Omega \caa^{K_i} \in \cal{P}$ for all $i$, then $\Omega \caa^{\kl^{*M}} \in \cal{P}$. 
\end{lem}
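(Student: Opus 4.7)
The plan is to apply Theorem \ref{mainthrm2}. Of its three hypotheses, $\Omega \caa^{K_i} \in \cal{P}$ is given; I expect $\Sigma A_j \in \cal{W}$ to follow from $\Sigma \caa^{L_i} \in \cal{W}$ by retracting onto full subcomplexes (each singleton $\{v\}$ is a full subcomplex of $L_i$ or $K_i$, so its associated polyhedral product $\caa^{\{v\}} \simeq \prod_{l \neq v} A_l$ retracts off $\caa^{L_i}$ or $\caa^{K_i}$, and the BBCG wedge splitting of the suspension isolates the individual $\Sigma A_j$ summands so that Lemma \ref{retractinw} applies). The substantive step is establishing $\Sigma H_i \in \cal{W}$.

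To verify $\Sigma H_i \in \cal{W}$, I first exploit the nullhomotopy hypothesis. The homotopy fibration
\[
H_i \to \caa^{L_i} \to \caa^{K_i}
\]
has a nullhomotopic second map, so it splits to give a homotopy equivalence
\[
H_i \simeq \caa^{L_i} \times \Omega \caa^{K_i}.
\]
Applying the James splitting of the suspension of a product, together with the identity $\Sigma(X \wedge Y) \simeq X \wedge \Sigma Y$, I obtain
\[
\Sigma H_i \simeq \Sigma \caa^{L_i} \vee \Sigma \Omega \caa^{K_i} \vee \bigl(\caa^{L_i} \wedge \Sigma \Omega \caa^{K_i}\bigr).
\]
The first wedge summand lies in $\cal{W}$ by hypothesis; the second lies in $\cal{W}$ by Lemma \ref{sw} applied to $\Omega \caa^{K_i} \in \cal{P}$. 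For the third summand, I write $\Sigma \Omega \caa^{K_i} \simeq \bigvee_\beta S^{m_\beta}$ with each $m_\beta \geq 2$, distribute the smash product over the wedge, and identify each factor
\[
\caa^{L_i} \wedge S^{m_\beta} \simeq \Sigma^{m_\beta} \caa^{L_i} \simeq \Sigma^{m_\beta - 1}\bigl(\Sigma \caa^{L_i}\bigr),
\]
which is an iterated suspension of a wedge of simply connected spheres and hence itself lies in $\cal{W}$. Therefore $\Sigma H_i \in \cal{W}$.

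Having established $\Sigma H_i \in \cal{W}$ for each $i$, I invoke Theorem \ref{mainthrm2} to conclude $\Omega \caa^{\kl^{*M}} \in \cal{P}$. The main technical obstacle is handling the third wedge summand above: the key trick is to shift the suspension off $\caa^{L_i}$ (which is not itself a wedge of spheres) and onto the wedge $\Sigma\Omega \caa^{K_i}$, so that the remaining factor is $\Sigma \caa^{L_i}$, which lies in $\cal{W}$ by hypothesis and thus controls everything after smashing.
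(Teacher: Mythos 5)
Your main argument is the same as the paper's: use the nullhomotopy to split $H_i \simeq \caa^{L_i} \times \Omega\caa^{K_i}$, suspend the product into three wedge summands, verify each lies in $\cal{W}$, and feed $\Sigma H_i \in \cal{W}$ into Theorem \ref{mainthrm2}. You are in fact more careful than the paper on two points: you justify the third summand explicitly by distributing the smash over the wedge $\Sigma\Omega\caa^{K_i} \simeq \bigvee_\beta S^{m_\beta}$ and shifting suspensions onto $\Sigma\caa^{L_i}$ (the paper simply asserts all summands are in $\cal{W}$), and you cite Lemma \ref{sw} for $\Sigma\Omega\caa^{K_i} \in \cal{W}$, which is the correct reference (the paper points to Lemma \ref{susfinw}, which concerns fibres of full-subcomplex inclusions).

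The one genuine flaw is your parenthetical derivation of $\Sigma A_j \in \cal{W}$. For a genuine (non-ghost) vertex $v$ of $L_i$ or $K_i$, the singleton $\{v\}$ is a face, so the associated full-subcomplex polyhedral product is $\caa^{\{v\}} = CA_v \simeq \ast$, not $\prod_{l\neq v} A_l$; and the BBCG splitting of $\Sigma\caa^{L_i}$ runs over the \emph{non-faces} $I \notin L_i$, so no summand of the form $\Sigma A_j$ ever appears. Concretely, if $L_i = K_i = \Delta^{k_i-1}$ then $\Sigma\caa^{L_i} \simeq \ast \in \cal{W}$ and $\Omega\caa^{K_i} \simeq \ast \in \cal{P}$ regardless of the spaces $A_j$, so $\Sigma A_j \in \cal{W}$ cannot follow from the stated hypotheses. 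The paper does not derive this condition either: it treats $\Sigma A_j \in \cal{W}$ as a standing assumption on the sequence $\caa$ inherited from Theorem \ref{mainthrm2} (compare how Lemma \ref{subpreserve} explicitly imports it). You should do the same — state it as an additional hypothesis rather than attempt to deduce it; with that change the rest of your argument goes through.
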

\begin{proof}
    As the map $\caa^{L_i} \to \caa^{K_i}$ is null homotopic, its
    homotopy fibre, denoted $H_i$, will be homotopy equivalent to $\caa^{L_i} \times \Omega \caa^{K_i}$. Therefore \[\Sigma H_i \simeq \Sigma \caa^{L_i} \vee \Sigma \Omega \caa^{K_i} \vee \Sigma (\caa^{L_i} \wedge \Omega \caa^{K_i}).\] As $\Sigma \caa^{L_i} \in \cal{W}$ by assumption, and $\Sigma \Omega \caa^{K_i} \in \cal{W}$ by Lemma \ref{susfinw}, all elements on the right hand side are in $\cal{W}$. Therefore we have $\Sigma H_i \in \cal{W}$. By Theorem \ref{mainthrm2}, then $\Omega \caa^{\kl^{*M}} \in \cal{P}$.
\end{proof}

The conditions laid out in Lemma \ref{nullmap} are generally easy to fulfill. For example, the pairs $(\Delta^{n_i-1}_k,\Delta^{n_i-1}_{k-1})$ satisfy such a condition, as $\caa^{\Delta^{n-1}_k} \in \cal{W}$ for $0 \leq k \leq n-1$ (\cite{IRIYE2013716}), and the map $\caa^{\Delta^{n-1}_k} \to \caa^{\Delta^{n-1}_{k+1}}$ is null homotopic (\cite{porter}). Recall that by \cite{bahri2008polyhedral} there is a homotopy equivalence \[\Sigma\caa^K \simeq \Sigma \big(\bigvee_{I \notin K} |K_I|*\widehat{A}^I \big).\] Therefore, if for all $I \notin K$, $|K_I| \in \cal{W}$ and $\Sigma A_i \in \cal{W}$ for all $i \in [m]$, then $\Sigma\caa^K \in \cal{W}$.

More generally, we can show that the substitution complex satisfies this condition. We first require the following proposition. 

\begin{prop}\cite{grbic2011homotopy}\label{nullhomotopic}
    Let $K$ be a simplicial complex on the vertex set $[m]$, and assume $K$ has no ghost vertices. Then the inclusion $\prod^m_{i=1} A_i \to \caa^K$ is null homotopic, and the homotopy fibre of this map is $\big(\prod^m_{i=1} A_i \big)\times \Omega \caa^{K}$.
\end{prop}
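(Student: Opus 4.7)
The plan is to break the proof into two steps. First I will produce an explicit null-homotopy of the inclusion $\iota \colon \prod_{i=1}^m A_i \to \caa^K$, using in an essential way that $K$ has no ghost vertices. Second, I will apply the standard fact that if $f \colon X \to Y$ is null-homotopic, then the homotopy fibration $\Omega Y \to F \to X$ splits, so $F \simeq X \times \Omega Y$.

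For the first step, the point of having no ghost vertices is that $\{i\} \in K$ for every $i$, which in the language of polyhedral products means that
\[
A_1 \times \cdots \times A_{i-1} \times CA_i \times A_{i+1} \times \cdots \times A_m \;=\; \caa^{\{i\}} \;\subseteq\; \caa^K.
\]
I will construct the null-homotopy $H \colon (\prod A_i) \times I \to \caa^K$ in $m$ consecutive stages, the $i$-th stage lasting for $t \in [(i-1)/m, i/m]$. On the $i$-th stage, coordinates $1,\dots,i-1$ have already been driven to the basepoint $*$, coordinates $i+1,\dots,m$ are held fixed at their original values in $A_{i+1},\dots,A_m$, and the $i$-th coordinate is contracted along the cone line in $CA_i$ from $a_i \in A_i$ to the cone point. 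Since the basepoint $*$ belongs to each $A_j$, the image throughout stage $i$ lies in $\{*\} \times \cdots \times \{*\} \times CA_i \times A_{i+1} \times \cdots \times A_m \subseteq \caa^{\{i\}} \subseteq \caa^K$, so the homotopy is well defined. Concatenating the $m$ stages produces a based homotopy from $\iota$ to the constant map at the basepoint.

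For the second step, let $F$ denote the homotopy fibre of $\iota$, modelled as pairs $(\underline{a}, \gamma)$ where $\gamma$ is a path in $\caa^K$ from $*$ to $\iota(\underline{a})$. The null-homotopy $H$ constructed above defines a section $\sigma \colon \prod A_i \to F$ of the projection $F \to \prod A_i$ by $\sigma(\underline{a}) = (\underline{a}, H(\underline{a}, -))$. Therefore the fibration $\Omega \caa^K \to F \to \prod A_i$ splits, and we obtain the asserted homotopy equivalence $F \simeq \big(\prod_{i=1}^m A_i\big) \times \Omega \caa^K$. I do not foresee a serious obstacle beyond bookkeeping; the only point requiring care is verifying that each stage of the contraction lies in $\caa^K$, which is immediate from $\{i\} \in K$ together with the fact that the already-contracted coordinates have reached the basepoint.
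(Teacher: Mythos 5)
The paper does not actually prove this statement—it is quoted directly from \cite{grbic2011homotopy}—so there is no in-paper argument to compare against; your proof supplies the standard argument, and it is essentially the one in the cited source: contract the coordinates one at a time through the cones, using that $\{i\} \in K$ for every $i$ to keep each stage inside $\caa^{\{i\}} \subseteq \caa^K$, then invoke the splitting of the homotopy fibration induced by a null-homotopic map. Both halves are sound, but one point needs tightening: you contract the $i$-th coordinate along the cone line to the \emph{cone point} of $CA_i$, yet then assert that the already-processed coordinates sit at the basepoint $* \in A_j$. With the unreduced cone (the usual convention for pairs such as $(D^2,S^1)$, where the basepoint lies on $A_i$ rather than at the apex) the cone point is not the basepoint and does not lie in $A_i$, so after stage $i$ the tuple would lie in $CA_1 \times \cdots \times CA_i \times A_{i+1} \times \cdots \times A_m$, which is contained in $\caa^K$ only if $\{1,\ldots,i\} \in K$—exactly what you cannot assume. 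The fix is immediate: in stage $i$, contract the $i$-th coordinate within the contractible space $CA_i$ all the way to the basepoint $* \in A_i$ (down the cone line to the apex and back up over $*$, say). Then your displayed containment $\{*\} \times \cdots \times \{*\} \times CA_i \times A_{i+1} \times \cdots \times A_m \subseteq \caa^{\{i\}}$ holds verbatim, the stages concatenate to a based null homotopy, and the section $\sigma(\underline{a}) = (\underline{a}, H(\underline{a},-))$ splits the principal fibration $\Omega\caa^K \to F \to \prod_{i=1}^m A_i$ as claimed.
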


\begin{lem}\label{subpreserve}
     Let the sequence of pairs $\caa$ be as in Theorem \ref{mainthrm2}. Let $K(K_1,\cdots,K_m)$ be a substitution complex. If $\Omega \caa^K,\Omega \caa^{K_1},\cdots$  $\Omega \caa^{K_m} \in \cal{P}$, then $\Omega \caa^{K(K_1,\cdots,K_m)} \in \cal{P}$. 
\end{lem}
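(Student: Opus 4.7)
The plan is to recognise the substitution complex as a special case of the polyhedral join product and then apply Lemma~\ref{nullmap}. By Example~\ref{pjpex}(1), $K(K_1,\ldots,K_m)$ is precisely $(\underline{K},\underline{\emptyset})^{*K}$: the polyhedral join product whose outer complex is $K$ and whose input pairs are $(K_i,\emptyset)$. So to apply Lemma~\ref{nullmap} with $M=K$ and $L_i=\emptyset$, I need, for each $i$, the inclusion $\caa^{L_i}\to\caa^{K_i}$ to be null homotopic and $\Sigma\caa^{L_i}\in\cal{W}$; the assumption $\Omega\caa^{K_i}\in\cal{P}$ is already part of the hypothesis, and $\Omega\caa^{K}\in\cal{P}$ supplies the (implicit) control on the outer complex.

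For the null homotopy, when $L_i=\emptyset$ the polyhedral product $\caa^{L_i}$ collapses to the Cartesian product $\prod_{j\in[k_i]}A_j$, and the map into $\caa^{K_i}$ is the canonical inclusion. Since by convention each $K_i$ is on its full vertex set, Proposition~\ref{nullhomotopic} says exactly that this inclusion is null homotopic.

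For the suspension condition I would use the classical stable splitting $\Sigma\prod_{j\in[k_i]}A_j\simeq\bigvee_{\emptyset\neq I\subseteq[k_i]}\Sigma\widehat{A}^I$, where $\widehat{A}^I=\bigwedge_{j\in I}A_j$, thereby reducing the problem to showing $\Sigma\widehat{A}^I\in\cal{W}$ for each nonempty $I$. This I would prove by induction on $|I|$, using the hypothesis $\Sigma A_j\in\cal{W}$ inherited from ``$\caa$ as in Theorem~\ref{mainthrm2}'': if $\Sigma B\simeq\bigvee_\alpha S^{n_\alpha}$ already lies in $\cal{W}$, then $\Sigma(B\wedge A_j)\simeq(\Sigma B)\wedge A_j\simeq\bigvee_\alpha \Sigma^{n_\alpha-1}\Sigma A_j$, once again a wedge of spheres. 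With all three conditions verified, Lemma~\ref{nullmap} yields $\Omega\caa^{K(K_1,\ldots,K_m)}\in\cal{P}$. The conceptual input is the identification of substitution as the ``$L_i=\emptyset$'' polyhedral join; the only technical piece is the short inductive smashing argument above, which is routine, so I do not anticipate any genuine obstacle.
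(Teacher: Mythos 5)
Your proposal is correct and follows essentially the same route as the paper: identify $K(K_1,\ldots,K_m)$ as the polyhedral join product on the pairs $(K_i,\emptyset)$, use Proposition~\ref{nullhomotopic} to see that $\prod_{j}A_j\to\caa^{K_i}$ is null homotopic, and then apply Lemma~\ref{nullmap}. The only difference is that you explicitly verify the hypothesis $\Sigma\caa^{L_i}=\Sigma\prod_j A_j\in\cal{W}$ via the stable splitting of a product and induction on smash factors, a step the paper's proof leaves implicit when it asserts that $\Sigma A_j\in\cal{W}$ for all $j$ suffices.
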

\begin{proof}
    Recall that the substitution complex is a special case of the polyhedral join product on the simplicial pairs $(K_i, \emptyset)$. Therefore $H_i$ is the homotopy fibre of the map $\prod_{j \in K_i} A_j \to \caa^{K_i}$. By Lemma \ref{nullhomotopic}, this map is null homotopic. As by assumption, $\Sigma A_j \in \cal{W}$ for all $j$, the conditions of Lemma \ref{nullmap} are satisfied, and $\Omega \caa^{K(K_1,\cdots,K_m)} \in \cal{P}$.
\end{proof}

\begin{lem}
Let the sequence of pairs $\caa$ be as in Theorem \ref{mainthrm2}. Let $K \langle K_1 ,\cdots, K_m \rangle$ be a composition complex. If $\Omega \caa^K \in \cal {P}$ and $\Sigma \caa^{K_i} \in \cal{W}$ for all $i \in [m]$, then $\Omega \caa^{K\langle K_1,\cdots,K_m\rangle} \in \cal{P}$. 
\end{lem}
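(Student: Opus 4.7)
The plan is to recognize the composition complex as a particular instance of the polyhedral join product and apply Theorem \ref{mainthrm2} directly. By Example \ref{pjpex}(2), $K\langle K_1, \ldots, K_m\rangle$ is precisely $(\underline{K'}, \underline{L'})^{*K}$ where the pairs are $(K'_i, L'_i) = (\Delta^{m_i-1}, K_i)$. So I would match this against the notation of Theorem \ref{mainthrm2} with $M = K$, the ``upper'' complex of each pair being $\Delta^{m_i-1}$, and the ``lower'' complex being $K_i$ of the present lemma.

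With this identification, I need to verify the two hypotheses of Theorem \ref{mainthrm2} for each $i \in [m]$. First, $\caa^{\Delta^{m_i-1}}$ is a product of cones and hence contractible, so $\Omega \caa^{\Delta^{m_i-1}} \simeq \ast$ is trivially in $\cal{P}$. Second, the homotopy fibre $H_i$ of the inclusion $\caa^{K_i} \to \caa^{\Delta^{m_i-1}}$ is homotopy equivalent to $\caa^{K_i}$, since the target is contractible; consequently $\Sigma H_i \simeq \Sigma \caa^{K_i}$, which lies in $\cal{W}$ by hypothesis. The condition $\Omega \caa^M \in \cal{P}$ needed to launch the induction in Theorem \ref{mainthrm2} is supplied by the hypothesis $\Omega \caa^K \in \cal{P}$.

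Having verified all the input hypotheses, Theorem \ref{mainthrm2} yields $\Omega \caa^{(\underline{K'}, \underline{L'})^{*K}} \in \cal{P}$, which is exactly $\Omega \caa^{K\langle K_1, \ldots, K_m\rangle} \in \cal{P}$. The main (minor) obstacle is simply keeping the two uses of the symbol $K_i$ distinct — the one appearing as the upper complex in the pair used by Theorem \ref{mainthrm2} versus the one appearing as the lower complex $L_i$ of the composition-complex construction — and it is this bookkeeping, together with the contractibility of $\caa^{\Delta^{m_i-1}}$, that drives the whole argument; no further work beyond invoking Theorem \ref{mainthrm2} is required.
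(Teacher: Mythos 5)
Your proposal is correct and essentially matches the paper's proof: both identify the composition complex as the polyhedral join product on the pairs $(\Delta^{m_i-1}, K_i)$, observe that $\caa^{\Delta^{m_i-1}}$ is contractible so that $H_i \simeq \caa^{K_i}$ and hence $\Sigma H_i \in \cal{W}$ by hypothesis, and then conclude via Theorem \ref{mainthrm2}. The only cosmetic difference is that the paper filters the argument through Lemma \ref{nullmap} (the null-homotopic-map criterion) rather than invoking Theorem \ref{mainthrm2} directly, which changes nothing of substance.
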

\begin{proof}
     Recall that the composition complex is a special case of the polyhedral join product on the simplicial pairs $(\Delta^{[k_i-1]}, K_i)$. Therefore $H_i$ is the homotopy fibre of the map $\caa^{K_i} \to \prod_{j \in K_i} CA_j$. As $\prod_{j \in K_i} CA_j$ is contractible, this map is null homotopic, so $H_i \simeq \caa^{K_i}$. By assumption $\Sigma \caa^{K_i} \in \cal{W}$, and so by Lemma \ref{nullmap}, $\Omega \caa^{K(K_1,\cdots,K_m)} \in \cal{P}$.
  \end{proof}

There are other pairs of simplicial complexes $\kl$ for which we can determine if $\Sigma H_i \in \cal{W}$.

\begin{prop}\label{fullsubex}
    Suppose $\kl$ are pairs such that for each $i \in [m]$, the simplicial complex $L_i$ is a full subcomplex of $K_i$. Let $\clxx$ be a sequence of pairs where $\Omega X_i \in \cal{P}$ for all $i \in [m]$. Suppose $\Omega \clxx^{M} \in \cal{P}$, and suppose $\clxx^{K_i} \in \cal{P}$ for all $i \in [m]$. Then $\Omega \clxx^{\kl^{*M}} \in \cal{P}$.
\end{prop}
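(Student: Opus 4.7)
The plan is to apply Theorem~\ref{mainthrm2} directly to the sequence of pairs $\clxx$, which fits the template $(CA_j, A_j)$ with $A_j = \Omega X_j$, and to verify each of the three hypotheses of that theorem in turn. First, since $\Omega X_j \in \cal{P}$ by assumption, Lemma~\ref{sw} gives $\Sigma A_j = \Sigma \Omega X_j \in \cal{W}$, so the suspension condition on the fibres is immediate. Second, the hypothesis $\Omega \clxx^{K_i} \in \cal{P}$ is precisely the looping condition required on each $K_i$.

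The key step is to show that $\Sigma H_i \in \cal{W}$ for each $i \in [m]$, where $H_i$ is the homotopy fibre of the map $\clxx^{L_i} \to \clxx^{K_i}$. This is exactly where the full-subcomplex hypothesis enters: since $L_i$ is a full subcomplex of $K_i$, Lemma~\ref{retract1} ensures that the induced map $\clxx^{L_i} \to \clxx^{K_i}$ has a left homotopy inverse, and so Lemma~\ref{susfinw} applies and yields $\Sigma H_i \in \cal{W}$ from the hypothesis $\Omega \clxx^{K_i} \in \cal{P}$.

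With all three conditions in hand, and with the remaining assumption $\Omega \clxx^{M} \in \cal{P}$ providing the base of the inductive pushout construction underlying the proof of Theorem~\ref{mainthrm2}, that theorem immediately delivers $\Omega \clxx^{\kl^{*M}} \in \cal{P}$. The only conceptual step is recognising that the full-subcomplex assumption on each pair is precisely what makes Lemma~\ref{susfinw} available; beyond that, the proof is a direct translation of the proposition's hypotheses into those of Theorem~\ref{mainthrm2}, and there is no genuine obstacle to overcome.
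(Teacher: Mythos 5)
There is a genuine gap at the key step. In Theorem \ref{mainthrm2} the space $H_i$ is the homotopy fibre of the map $\clxx^{L_i} \to \clxx^{K_i}$ in which $L_i$ is regarded on the \emph{whole} vertex set of $K_i$: the source is $\clxx^{L_i} \times \prod_{j} \Omega X_j$, the product running over the ghost vertices of $L_i$ (this is forced by the pushout of Theorem \ref{the1} that produces $H_i$). Lemma \ref{retract1} supplies a left inverse only for the inclusion $\uxa^{L_i} \to \uxa^{K_i}$ of the polyhedral product taken over the vertices of $L_i$ alone; the retraction is the coordinate projection, which on the ghost coordinates lands in $X_j$ rather than $\Omega X_j$, so it does not retract $\clxx^{K_i}$ onto $\clxx^{L_i}\times\prod_j \Omega X_j$. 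The map in question can in fact have no left homotopy inverse at all: take $K_i$ to be two disjoint vertices and $L_i$ one of them, so the map is $C\Omega X_1 \times \Omega X_2 \simeq \Omega X_2 \to \clxx^{K_i} \simeq \Omega X_1 * \Omega X_2$, whose target is simply connected while its source need not be. Consequently Lemma \ref{susfinw} does not apply to the map whose fibre is $H_i$, and identifying that fibre with the one controlled by Lemma \ref{susfinw} is exactly where your argument breaks; the two fibres differ, roughly, by the ghost-vertex factors $\prod_j\Omega X_j$.

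The paper's proof is built precisely to avoid this. It passes to the pairs $\ux$, for which ghost vertices contribute only basepoints and so the full-subcomplex retraction of Lemma \ref{retract1} genuinely applies: it first establishes $\Omega\ux^{K_i}\in\cal{P}$ via the fibration $\clxx^{K_i}\to\ux^{K_i}\to\prod_j X_j$, deduces that the fibre $F_i$ of $\ux^{L_i}\to\ux^{K_i}$ lies in $\cal{P}$ and hence $\Sigma F_i\in\cal{W}$, and then shows $H_i\simeq F_i$ by comparing the fibrations of $\clxx^{L_i}$ and $\clxx^{K_i}$ over the common base $\prod_{j=1}^{m_i}X_j$. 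That comparison diagram is the missing content in your proposal. (Separately, you have silently replaced the stated hypothesis $\clxx^{K_i}\in\cal{P}$ with $\Omega\clxx^{K_i}\in\cal{P}$; the paper's own proof also appears to use the looped version, so this is likely a typo in the statement, but it should be flagged rather than assumed.)
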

\begin{proof}
   To apply Theorem \ref{mainthrm2} we need to show $\Sigma H_i$, the homotopy fibre of the map $\clxx^{L_i} \to \clxx^{K_i}$ is an element of $\cal{W}$. First consider $\Omega \ux^M$. By \cite{MR2321037} there is a homotopy fibration 
   \[\clxx^M \to \ux^M \to \prod_{i=1}^m \Omega X_i\] that splits after looping
    \[\Omega \ux^M \simeq \prod_{i=1}^m \Omega X_i \times \Omega \clxx^M.\]
    As every element on the right hand side of this homotopy equivalence is in $\cal{P}$, we obtain $\Omega \ux^M \in \cal{P}$. Via an identical argument, $\Omega \ux^{K_i} \in \cal{P}$ for all $i \in [m]$. Let $F_i$ denote the homotopy fibre of $\ux^{L_i} \to \ux^{K_i}$. As $L_i$ is a full subcomplex of $K_i$, by Lemma \ref{retract1} this map has a left homotopy inverse, and so the connecting map $\delta: \Omega \ux^{K_i} \to F_i$ has a right homotopy inverse. We therefore have a homotopy equivalence $\Omega \ux^{K_i} \simeq \Omega \ux^{L_i} \times F_i$. If $\Omega \ux^{K_i} \in \cal{P}$, by Lemma \ref{retractinp} $F_i$ is also in $\cal{P}$. By Lemma \ref{sw}, $\Sigma F_i$ must therefore be in $\cal{W}$. Now we can determine the homotopy type of $\Sigma H_i$. Consider the following homotopy fibration diagram \[\begin{tikzcd}H_i & {\clxx^{L_i}} & {\clxx^{K_i}} \\F_i & {\ux^{L_i}} & {\ux^{K_i}} \\\ast & {\prod_{j=1}^{m_i}X_j} & {\prod_{j=1}^{m_i}X_j}\arrow["{  }", from=1-1, to=1-2]\arrow[from=1-2, to=1-3]\arrow["\simeq"', from=1-1, to=2-1]\arrow[from=2-1, to=2-2]	\arrow[from=2-2, to=2-3]	\arrow[from=2-1, to=3-1]	\arrow[from=3-1, to=3-2]	\arrow["=",from=3-2, to=3-3]	\arrow[from=2-3, to=3-3]	\arrow[from=1-3, to=2-3]	\arrow[from=1-2, to=2-2]	\arrow[from=2-2, to=3-2]\end{tikzcd}\]   
    where the homotopy fibrations in the middle and right column are exactly as defined in \cite{MR2321037}. From this diagram, there is a homotopy equivalence $F_i \simeq H_i$, and as $\Sigma F_i \in \cal{W}$ we obtain $\Sigma H_i \in \cal{W}$. As all the conditions of Theorem \ref{mainthrm2} have been met, it follows that $\clxx^{\kl^{*M}} \in \cal{P}$. 
\end{proof}
\section{The substitution complex and new examples}
In this section we show that the substitution operation can be used to generate new examples of simplicial complexes such that $\Omega \caa^{K} \in \cal{P}$. Recall that there are two known families of simplicial complexes where $\Omega  \caa^K \in \cal{P}$: those $K$ such that $\caa^K \in \cal{W}$, and those $K$ that are $k$-skeletons of a flag complex. We wish to construct new examples of simplicial complexes where $\Omega \caa^K \in \cal{P}$ but $\caa^K \notin \cal{W}$ and $K$ is not the $k$-skeleton of a flag complex. To do this, we focus on the substitution complex, as we can =determine some full subcomplexes in this case. We first give sufficient conditions to ensure that $\caa^{K(K_1,\cdots,K_m)}$ is not in $\cal{W}$.

\begin{lem}\label{notwedge1}
   Let $K(K_1,\cdots,K_m)$ be a substitution complex where at least one $K_i$ is such that $\caa^{K_i} \notin \cal{W}$ . Then $\caa^{K(K_1,\cdots,K_m)} \notin \cal{W}$.  
\end{lem}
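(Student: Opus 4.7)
The plan is to argue by contradiction using the retraction property of polyhedral products over full subcomplexes. By hypothesis, there is some index $i$ with $\caa^{K_i} \notin \cal{W}$. The key combinatorial input is Corollary \ref{full1}, which tells us that each ingredient complex $K_i$ sits inside the substitution complex $K(K_1,\cdots,K_m)$ as a full subcomplex on its original vertex set.

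Once this full-subcomplex structure is in hand, the topological consequence is immediate from Lemma \ref{retract1}: the inclusion of polyhedral products $\caa^{K_i} \hookrightarrow \caa^{K(K_1,\cdots,K_m)}$ admits a left homotopy inverse, so $\caa^{K_i}$ is a retract of $\caa^{K(K_1,\cdots,K_m)}$. Now suppose toward a contradiction that $\caa^{K(K_1,\cdots,K_m)} \in \cal{W}$. Then by Lemma \ref{retractinw}, every retract lies in $\cal{W}$, so in particular $\caa^{K_i} \in \cal{W}$, contradicting the hypothesis on $K_i$. Hence $\caa^{K(K_1,\cdots,K_m)} \notin \cal{W}$.

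There is essentially no obstacle here, as all of the ingredients have been set up in the preceding sections; the proof is a direct assembly of Corollary \ref{full1}, Lemma \ref{retract1}, and Lemma \ref{retractinw}. The only thing to be careful about is ensuring that the sequence of pairs $\caa$ restricts correctly when passing from $K(K_1,\cdots,K_m)$ to the full subcomplex $K_i$, but this is built into the definition since each $K_i$ sits on its original vertex set and the polyhedral product construction is local to the vertices involved.
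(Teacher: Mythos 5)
Your proof is correct and is essentially identical to the paper's: both argue by contradiction, using Corollary \ref{full1} to see that $K_i$ is a full subcomplex, Lemma \ref{retract1} to get the retraction of $\caa^{K_i}$ off $\caa^{K(K_1,\cdots,K_m)}$, and Lemma \ref{retractinw} to derive the contradiction. No issues.
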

\begin{proof}
    Assume that $\caa^{K(K_1,\cdots,K_m)} \in \cal{W}$. By Corollary \ref{full1}, $K_i$ is a full subcomplex of $K(K_1,\cdots,K_m)$. Therefore by Lemma \ref{retract1} ,$\caa^{K_i}$ retracts off $\caa^{K(K_1,\cdots,K_m)}$. By Lemma~ \ref{retractinw} this implies that $\caa^{K_i}$ has the homotopy type of a wedge of spheres, giving a contradiction. Therefore, $\caa^{K(K_1,\cdots,K_m)}$ cannot have the homotopy type of a wedge of spheres.
\end{proof}

\begin{lem}\label{notwedge2}
    Let $K(K_1,\cdots,K_m)$ be a substitution complex and let $K$ be such that $\caa^K \notin \cal{W}$. Then $\caa^{K(K_1,\cdots,K_m)}\notin \cal{W}$
\end{lem}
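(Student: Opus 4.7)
The plan is to mirror the proof strategy of Lemma \ref{notwedge1}, but replace the role of the individual $K_i$ (which sits as a full subcomplex by Corollary \ref{full1}) with a copy of $K$ itself inside the substitution complex. The key input is Lemma \ref{full2}, which tells us that $K(K_1,\dots,K_m)$ contains a full subcomplex $K'$ with $K' \cong K$ as simplicial complexes.

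First, I would argue by contradiction: suppose $\caa^{K(K_1,\dots,K_m)} \in \cal{W}$. By Lemma \ref{full2}, there is a full subcomplex $K' \subseteq K(K_1,\dots,K_m)$ with $K' = K$, and hence $\caa^{K'} = \caa^{K}$. Next I would apply Lemma \ref{retract1} to the inclusion $K' \hookrightarrow K(K_1,\dots,K_m)$, which produces a retraction $\caa^{K(K_1,\dots,K_m)} \to \caa^{K}$; in particular, $\caa^{K}$ retracts off $\caa^{K(K_1,\dots,K_m)}$. Since $\caa^{K(K_1,\dots,K_m)} \in \cal{W}$ by assumption, Lemma \ref{retractinw} then forces $\caa^{K} \in \cal{W}$, contradicting the hypothesis on $K$. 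Hence $\caa^{K(K_1,\dots,K_m)} \notin \cal{W}$.

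There is essentially no obstacle here, since each ingredient is already in place in Section 3: the existence of the full subcomplex copy of $K$ (Lemma \ref{full2}), the retraction property of full subcomplexes at the level of polyhedral products (Lemma \ref{retract1}), and the closure of $\cal{W}$ under retracts (Lemma \ref{retractinw}). The only subtlety worth highlighting in the write-up is that Lemma \ref{full2} genuinely requires the substitution construction (not just an arbitrary polyhedral join product), which is why the statement is phrased for the substitution complex rather than for a general $\kl^{*M}$. This is consistent with the remark at the end of Section 3 that a copy of $M$ as a full subcomplex is special to the substitution case.
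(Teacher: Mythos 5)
Your proof is correct and is essentially the same argument as the paper's: exhibit a full subcomplex copy of $K$ inside $K(K_1,\cdots,K_m)$, retract $\caa^{K}$ off $\caa^{K(K_1,\cdots,K_m)}$ via Lemma \ref{retract1}, and conclude with Lemma \ref{retractinw}. In fact you cite the correct source for the full subcomplex copy of $K$ (Lemma \ref{full2}), whereas the paper's proof points to Corollary \ref{full1}, which is evidently a typo since that corollary concerns the $K_i$ rather than $K$.
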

\begin{proof}
    Assume that $\caa^{K(K_1,\cdots,K_m)} \in \cal{W}$. By Corollary \ref{full1}, there is a copy of $K$ contained in $K(K_1,\cdots,K_m)$ which is a full subcomplex. Therefore by Lemma \ref{retract1}, we have $\caa^{K}$ retracting off $\caa^{K(K_1,\cdots,K_m)}$.By Lemma \ref{retractinw} this implies that $\caa^{K}$ has the homotopy type of a wedge of spheres, giving a contradiction.  Therefore $\caa^{K(K_1,\cdots,K_m)}$ is not homotopy equivalent to a wedge of spheres, as $\caa^K \notin \cal{W}$. 
\end{proof}

We now wish to show that we can build substitution complexes that are not the $k$-skeletons of flag complexes. We first deal with the case where $K$ is not flag. Recall that a simplicial complex $K$ is \emph{flag} if every set of vertices of $K$ that are pairwise connected by edges form a face of $K$, and the \emph{$k$-skeleton} of a simplicial complex $K$ is the collection of simplices of dimension $k$ or less. A minimal missing face of $K$ is a subset $\omega \subseteq [m]$ such that $\omega \notin K$ but every proper subset of $\omega$ is a simplex in $K$. We denote the set of missing faces of $K$ as $MF(K)$, and the set of minimal missing faces of $K$ as $MMF(K)$. If $K$ is a flag complex, its minimal missing faces are on no more than two vertices. 

\begin{lem}\cite{Abramyan_2019}\label{mf}
    The minimal missing faces of $K(K_1,\cdots,K_m)$ are precisely the following \[MMF(K_1) \sqcup \cdots \sqcup MMF(K_m) \sqcup \bigsqcup_{\Delta(i_1,\cdots,i_k)\in MMF(K)} MMF_{i_1,\cdots,i_k}(K(K_1,\cdots,K_m)) \] where \[MMF_{i_1,\cdots,i_k}(K(K_1,\cdots,K_m)) = 
\{\Delta(j_1,\cdots, j_k) \,|\, j_l \in K_{i_l}
, l = 1,\cdots, k\}. \eqno\qed \] 
\end{lem}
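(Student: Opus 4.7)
The plan is to exploit the explicit description of simplices in the substitution complex that comes from the polyhedral join product definition, and then to carry out a direct case analysis on the location of a minimal missing face.

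First I would record the explicit form of simplices of $K(K_1,\ldots,K_m)$: since the substitution complex is the polyhedral join on the pairs $(K_i,\emptyset)$, a subset of the vertex set is a simplex precisely when it can be written as $\bigcup_{i\in\sigma}\tau_i$ with $\sigma\in K$ and each $\tau_i\in K_i$. From this description the two candidate families are easily seen to be missing faces. For $\omega\in MMF(K_i)$, the fact that $K_i$ is a full subcomplex of $K(K_1,\ldots,K_m)$ (Corollary~\ref{full1}) implies $\omega\notin K(K_1,\ldots,K_m)$ while every proper subset of $\omega$ lies in $K_i$ and hence in $K(K_1,\ldots,K_m)$. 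For a face $\Delta(j_1,\ldots,j_k)$ with $j_l\in K_{i_l}$ and $\{i_1,\ldots,i_k\}\in MMF(K)$, the index set $\{i_1,\ldots,i_k\}\notin K$ forces $\{j_1,\ldots,j_k\}\notin K(K_1,\ldots,K_m)$, and any proper subset corresponds to a proper subset of $\{i_1,\ldots,i_k\}$, which lies in $K$ by minimality, so the subset is a simplex.

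The substantive step is the converse: every minimal missing face $\omega$ of $K(K_1,\ldots,K_m)$ lies in one of the two families. For each $i$ set $\omega_i=\omega\cap V(K_i)$. I would split into two cases. Case one: some $\omega_i$ fails to be a face of $K_i$. Then $\omega_i$ contains a minimal missing face $\omega_i'$ of $K_i$, which by fullness of $K_i$ is also a missing face of $K(K_1,\ldots,K_m)$. Since $\omega_i'\subseteq\omega$ and $\omega$ is minimal, $\omega=\omega_i'\in MMF(K_i)$. Case two: every $\omega_i$ is a face of $K_i$. Let $I=\{i:\omega_i\neq\emptyset\}$. If $I\in K$, then $\omega=\bigcup_{i\in I}\omega_i$ would be a simplex of $K(K_1,\ldots,K_m)$, contradicting that $\omega$ is a missing face, so $I\notin K$. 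Pick a minimal missing face $J=\{i_1,\ldots,i_k\}\in MMF(K)$ with $J\subseteq I$, and for each $l$ choose a single vertex $j_l\in\omega_{i_l}$. Then $\{j_1,\ldots,j_k\}\subseteq\omega$ is a missing face of $K(K_1,\ldots,K_m)$ (by the same argument as above), so minimality forces equality, placing $\omega$ in $MMF_{i_1,\ldots,i_k}(K(K_1,\ldots,K_m))$.

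Disjointness of the union is essentially built into the construction: the faces in $MMF(K_i)$ live on the vertex set of $K_i$ alone, whereas each face in $MMF_{i_1,\ldots,i_k}(K(K_1,\ldots,K_m))$ meets at least two of the vertex sets $V(K_{i_l})$, and different index sets $\{i_1,\ldots,i_k\}$ produce faces on different unions of the $V(K_{i_l})$.

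The main obstacle is the converse direction, and specifically the delicate choice in Case two: one must argue that in a minimal missing face no $\omega_{i_l}$ can contain more than one vertex, since otherwise the one-vertex-per-block subset $\{j_1,\ldots,j_k\}$ produces a strictly smaller missing face. The key point that makes this work cleanly is the description of simplices as unions indexed by simplices of $K$, together with the fullness of each $K_i$; once those are in hand, the rest is straightforward bookkeeping.
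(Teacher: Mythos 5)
Your argument is correct. Note first that the paper does not actually prove Lemma~\ref{mf}: it is quoted from \cite{Abramyan_2019} with a \qed attached to the statement, so there is no in-paper proof to compare against. What you have written is a complete, self-contained combinatorial proof, and it hinges on exactly the right characterisation: a subset $\omega$ of the total vertex set is a simplex of $K(K_1,\ldots,K_m)$ if and only if each block $\omega_i=\omega\cap V(K_i)$ is a face of $K_i$ and the support $I=\{i:\omega_i\neq\emptyset\}$ is a face of $K$. Both directions then go through as you describe; in particular, the worry you flag in your final paragraph (that some $\omega_{i_l}$ might contain two vertices) is already disposed of by your minimality step, since the one-vertex-per-block subset $\{j_1,\ldots,j_k\}$ is itself a missing face contained in $\omega$ and hence must equal $\omega$, which simultaneously forces $I=J$ and each block to be a singleton. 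Two small points worth making explicit if you write this up: the argument tacitly assumes no ghost vertices (in $K$ and in each $K_i$), so that singletons are faces and minimal missing faces have at least two vertices — this is the standing convention for the substitution complex and is what makes the disjointness claim clean; and your appeal to Corollary~\ref{full1} for the forward direction of family (i) can be replaced by the block characterisation directly ($\omega\cap V(K_i)=\omega\notin K_i$ already violates the face condition), which keeps the proof independent of the fullness results proved later in Section~3.
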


\begin{lem}\label{notflag}
Let $K$ be a simplicial complex on the vertex set $[m]$ that is not flag. Then the complex $K(K_1,\cdots,K_m)$ is not flag.
\end{lem}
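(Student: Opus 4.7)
The plan is to invoke Lemma \ref{mf} directly, translating the failure of flagness of $K$ into the existence of a minimal missing face of $K(K_1,\ldots,K_m)$ on at least three vertices.

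First, I would recall that a simplicial complex is flag if and only if all of its minimal missing faces have exactly two vertices. Since $K$ is not flag by hypothesis, there exists a minimal missing face $\Delta(i_1,\ldots,i_k) \in MMF(K)$ with $k \geq 3$.

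Next, I would apply Lemma \ref{mf}, which describes $MMF(K(K_1,\ldots,K_m))$ explicitly. In particular, the set
\[ MMF_{i_1,\ldots,i_k}(K(K_1,\cdots,K_m)) = \{\Delta(j_1,\ldots,j_k) \mid j_l \in K_{i_l},\, l = 1,\ldots,k\} \]
is contained in $MMF(K(K_1,\ldots,K_m))$. Choosing any vertex $j_l \in K_{i_l}$ for each $l$ (which exists since each $K_{i_l}$ is nonempty as a vertex of the substitution), we obtain a minimal missing face $\Delta(j_1,\ldots,j_k)$ of $K(K_1,\ldots,K_m)$ with $k \geq 3$ vertices.

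Finally, the existence of a minimal missing face on at least three vertices is equivalent to failure of flagness, so $K(K_1,\ldots,K_m)$ is not flag. The argument is essentially a one-step consequence of Lemma \ref{mf}; the only thing to be careful about is ensuring the selected $j_l$ are genuine vertices of the substitution complex, which follows from the convention that each $K_{i_l}$ is a nonempty simplicial complex. There is no real obstacle here beyond unpacking the definition of flag.
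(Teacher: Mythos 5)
Your proof is correct, but it takes a different route from the paper. You deduce non-flagness from Lemma \ref{mf}: a non-flag $K$ has a minimal missing face $\Delta(i_1,\ldots,i_k)$ with $k\geq 3$, and the explicit description of $MMF(K(K_1,\ldots,K_m))$ then hands you a minimal missing face $\Delta(j_1,\ldots,j_k)$ on $k\geq 3$ vertices in the substitution complex, which (since all its proper subsets, in particular all its edges, are faces) is exactly a witness that $K(K_1,\ldots,K_m)$ is not flag. The paper instead argues directly from the definition: it picks a set $\{1,\ldots,k\}$ of pairwise connected vertices of $K$ that is not a face, chooses one vertex $v_i$ in each $K_i$, and checks by hand that $\{v_1,\ldots,v_k\}$ is pairwise connected in $K(K_1,\ldots,K_m)$ (edges of $K$ induce edges between the substituted blocks) yet is not a face (since $\{1,\ldots,k\}\notin K$). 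The two arguments are essentially the same construction seen through different lenses: your version is a one-step corollary of the imported combinatorial Lemma \ref{mf} (which the paper does invoke for the neighbouring Lemmas \ref{notflag1} and the $k$-skeleton results), while the paper's version is self-contained and avoids needing the full minimal-missing-face characterisation. The only points you should make explicit are the two standard translations you use implicitly: that non-flagness of $K$ yields a \emph{minimal} missing face on at least three vertices (take an inclusion-minimal pairwise-connected non-face), and that each $K_{i_l}$ is nonempty so the vertices $j_l$ exist; both are harmless under the paper's conventions.
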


\begin{proof}
    Relabeling vertices as required, let $\{1,\cdots,k\}$, $k \geq 2$ be a set of pairwise connected vertices that do not form a face of of $K$. Let $v_i$ denote a vertex in $K_i$, and consider the vertex set $\{v_1,\cdots,v_k\} \in K(K_1,\cdots,K_m)$. This does not form a simplex in $K(K_1,\cdots,K_m)$ as $\{1,\cdots,k\}$ is not a face of $K$. However, as each pair $\{i,j\} \subseteq \{1,\cdots,k\}$ is connected by an edge in $K$, the pairs $\{v_i,v_j\} \subseteq \{v_1,\cdots,v_k\}$ are connected by an edge in $K(K_1,\cdots,K_m)$, and so the set $\{v_1,\cdots,v_k\}$ is pairwise connected. Therefore, we have found a collection of pairwise connected vertices in  $K(K_1,\cdots,K_m)$ that does not form a face, and so $K(K_1,\cdots,K_m)$ cannot be flag. 
    \end{proof}

\begin{lem}
    Let $K$ be a simplicial complex that is not flag. For at least one vertex $v$ in a minimal missing face of $K$, suppose that $K_v$, that is, the simplicial complex substituted into $K$ at vertex $v$, contains a 1-simplex. Then $K(K_1,\cdots,K_m)$ is not the $k$-skeleton of any flag complex.
\end{lem}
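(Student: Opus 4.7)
The plan is to argue by contradiction: suppose $K(K_1,\ldots,K_m) = \mathrm{sk}_{d}(L)$ for some integer $d \ge 1$ and some flag complex $L$. Because $K$ is not flag, it possesses a minimal missing face $\sigma = \{i_1,\ldots,i_k\}$ of size $k \ge 3$, and after relabeling we may take the hypothesised vertex to be $v = i_1$, so that $K_{i_1}$ contains an edge $\{a,b\}$. The core idea is to produce two vertex sets in $K(K_1,\ldots,K_m)$ of the same cardinality $k$ (so both of dimension $k-1$): one that is a clique in the $1$-skeleton but is \emph{not} a face of $K(K_1,\ldots,K_m)$, forcing $d < k-1$; and one that \emph{is} a face, forcing $d \ge k-1$.

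For the non-face, pick any $j_l \in K_{i_l}$ for $l = 1,\ldots,k$ and set $T = \{j_1,\ldots,j_k\}$. By Lemma \ref{mf}, $T$ is a minimal missing face of $K(K_1,\ldots,K_m)$: its index set is $\sigma \notin K$, while every proper sub-index set of $\sigma$ lies in $K$ by minimality of $\sigma$. In particular every pair $\{j_l,j_{l'}\} \subseteq T$ is an edge of $K(K_1,\ldots,K_m)$, so by flagness of $L$, the set $T$ is a face of $L$. Since $T$ is not a face of $\mathrm{sk}_d(L) = K(K_1,\ldots,K_m)$, we must have $\dim T > d$, giving $d \le k-2$. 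For the face, consider $R = \{a,b,j_2,\ldots,j_{k-1}\}$: its index set is $\sigma \setminus \{i_k\}$, a proper subset of $\sigma$ and hence a face of $K$, and inside $K_{i_1}$ we use the $1$-simplex $\{a,b\}$. Thus $R$ is a face of $K(K_1,\ldots,K_m)$ of dimension $k-1$, which forces $d \ge k-1$ and contradicts $d \le k-2$.

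The main technical point is careful bookkeeping of which subsets of vertices of $K(K_1,\ldots,K_m)$ come from which index sets of $K$. The hypothesis that some $K_v$ contains a $1$-simplex is used essentially: it allows us to ``inflate'' an index set of size $k-1$ (a proper subset of $\sigma$, hence in $K$) into a vertex set of size $k$, matching the cardinality of the forbidden clique $T$ and thereby producing a face of $K(K_1,\ldots,K_m)$ of the same dimension as $T$. Lemma \ref{mf} supplies the classification of minimal missing faces of the substitution complex, and everything else is routine combinatorial verification.
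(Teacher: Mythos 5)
Your proof is correct and follows essentially the same strategy as the paper's: use Lemma \ref{mf} to exhibit a minimal missing face of $K(K_1,\cdots,K_m)$ that is a clique of size $k$, forcing the skeleton dimension to be at most $k-2$, and then inflate a proper subface of $\sigma$ using the edge in $K_v$ to produce a face of dimension $k-1$, a contradiction. Your bookkeeping of the two bounds ($d\le k-2$ versus $d\ge k-1$) is in fact slightly cleaner than the paper's wording of the same argument.
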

\begin{proof}
    As $K$ is not flag, it will have a minimal missing face on at least $3$ vertices. Reordering vertices if necessary, let $\{1,\cdots,j\}$ be a minimal missing face of $K$, and suppose that $K_j$ has at least one $1$ simplex. By Lemma \ref{mf}, $\{v_1,\cdots,v_j\}$ is a minimal missing face in $K(K_1,\cdots,K_m)$, and as the boundary of $(1,\cdots,j)$ is contained in $K$, the boundary of $(v_1,\cdots,v_j)$ is contained in $K(K_1,\cdots,K_m)$. As $\{v_1,\cdots,v_j\}$ is a minimal missing face of $K(K_1,\cdots,K_m)$, if $K(K_1,\cdots,K_m)$ was the $k$-skeleton of a flag complex, it could only be the $j-1$-skeleton of a flag complex. Therefore it could not contain any simplices of dimension greater than $j-1$. Let $v^1_j$ and $v^2_j$ denote two vertices of $K_j$ that are joined by a edge. The link of $j$ in $K$ will contain the simplex $(1,\cdots,j-2)$, and so by definition of the substitution operation, we have $(v_1,\cdots,v_{j-2},v^1_j,v^2_j) \in K(K_1,\cdots,K_m)$, a contradiction. Therefore $K(K_1,\cdots,K_m)$ cannot be the $k$-skeleton of any flag complex. 
\end{proof} 

We now consider the case when $K$ is flag. 

\begin{lem}\label{notflag1}
   Let $K$ be a flag complex on $[m]$ vertices. If at least one $K_i$ substituted into $K$ is not flag, then $K(K_1,\cdots,K_m)$ is not flag.
\end{lem}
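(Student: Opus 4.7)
The plan is to exhibit a set of vertices in $K(K_1,\ldots,K_m)$ that are pairwise joined by edges but fail to span a simplex, directly witnessing non-flagness. Relabelling if necessary, assume $K_1$ is not flag. By definition there is a subset $\{v_1,\ldots,v_k\}$ of vertices of $K_1$ with $k \geq 3$ such that every pair $\{v_a,v_b\}$ is an edge of $K_1$, but $\{v_1,\ldots,v_k\}$ itself is not a face of $K_1$.

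First I would invoke Corollary~\ref{full1}, which says that $K_1$ embeds as a full subcomplex of $K(K_1,\ldots,K_m)$ on its original vertex set. Being a subcomplex ensures that each edge $\{v_a,v_b\}$ of $K_1$ persists as an edge of $K(K_1,\ldots,K_m)$, so $\{v_1,\ldots,v_k\}$ is still pairwise connected in the substitution complex.

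Next I would use fullness in the opposite direction: any simplex of $K(K_1,\ldots,K_m)$ whose vertex set lies inside the vertex set of $K_1$ must already be a simplex of $K_1$. Since $\{v_1,\ldots,v_k\}$ is not a face of $K_1$, it therefore cannot be a face of $K(K_1,\ldots,K_m)$. Combined with the previous step, this gives a pairwise connected set of vertices in $K(K_1,\ldots,K_m)$ that is not a face, so the substitution complex is not flag. The hypothesis that the ambient complex $K$ is flag plays no role in the argument; it is listed only because it is the case of interest for later constructions. There is no real obstacle to overcome: the proof is a direct unpacking of the full subcomplex property from Corollary~\ref{full1}.
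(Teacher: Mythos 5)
Your proof is correct, but it takes a genuinely different route from the paper's. The paper's argument is a one-line application of Lemma \ref{mf}, the characterization of the minimal missing faces of a substitution complex: since some $K_i$ is not flag it has a minimal missing face on at least three vertices, and by Lemma \ref{mf} the minimal missing faces of each $K_i$ appear among the minimal missing faces of $K(K_1,\cdots,K_m)$, so the substitution complex has a minimal missing face on more than two vertices and cannot be flag. You instead bypass the missing-face calculus entirely and argue via Corollary \ref{full1}: $K_i$ sits inside $K(K_1,\cdots,K_m)$ as a full subcomplex, so a pairwise-connected non-face of $K_i$ stays pairwise connected (because $K_i$ is a subcomplex) and stays a non-face (because the subcomplex is full), which witnesses non-flagness directly. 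Both arguments are sound; yours is more self-contained, needing only the fullness of the embedding rather than the complete description of the minimal missing faces quoted from the literature, while the paper's choice fits into the chain of missing-face arguments it reuses in Lemmas \ref{kskelmis} and \ref{notflag2}. Your side remark that the flagness of $K$ is never used is also accurate --- it is equally unused in the paper's proof, and the lemma holds for arbitrary $K$.
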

\begin{proof}
    By Lemma \ref{mf}, all missing faces of each $K_i$ are missing faces of $K(K_1,\cdots,K_m)$. Therefore $K(K_1,\cdots,K_m)$ will contain all the missing faces of $K_i$, and so cannot be flag. 
\end{proof}

\begin{lem}\label{kskelmis}  If $K$ is the $k$-skeleton of a flag complex, and $K \neq \Delta^k$, it must have a minimal missing face of dimension $k+1$.\end{lem}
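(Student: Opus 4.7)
The plan is to locate a face $\tau \in F$ with $\dim \tau = k+1$ in any flag complex $F$ realizing $K = F^{(k)}$, since such a $\tau$ is automatically a minimal missing face of $K$ of the required dimension. So I would fix a flag complex $F$ with $K = F^{(k)}$ and proceed in two steps.

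First I would argue that $F$ has dimension at least $k+1$. If $\dim F \leq k$, then $K = F^{(k)} = F$, so $K$ is itself a flag complex. In the intended application of Section~7 this lemma is only invoked on substitution complexes that have already been shown to be non-flag by Lemmas~\ref{notflag} and~\ref{notflag1}, which rules this case out; equivalently, the phrase "$k$-skeleton of a flag complex" should be read as a strict $k$-skeleton, where $\dim F > k$. Under either reading, we may assume $F$ contains some face of dimension $\geq k+1$, and then pass to a $(k+2)$-element subset of its vertices to produce a face $\tau \in F$ with $\dim \tau = k+1$ exactly.

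Second, I would check that such a $\tau$ is a minimal missing face of $K$. Since $\dim \tau = k+1 > k$, the face $\tau$ does not lie in $K = F^{(k)}$. On the other hand, every proper subset $\sigma \subsetneq \tau$ lies in $F$ (as $F$ is closed under taking subsets) and satisfies $\dim \sigma \leq k$, so $\sigma \in F^{(k)} = K$. Hence $\tau$ is a minimal missing face of $K$ of dimension $k+1$, as required.

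The main obstacle is the first step: the hypothesis $K \neq \Delta^k$ on its own does not exclude $\dim F \leq k$. For example, $K = \Delta^k \cup_{\mathrm{pt}} \Delta^k$ is a flag complex of dimension $k$, distinct from $\Delta^k$, with no minimal missing face of dimension $k+1$. So the statement tacitly requires that $K$ not itself be flag, which is automatic in the setting where the lemma is applied; once this is granted, both steps are routine, and the conclusion follows.
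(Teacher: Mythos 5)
Your argument is essentially the paper's: the paper takes the clique complex $K^f$ on the $1$-skeleton of $K$, picks a $(k+1)$-simplex of $K^f$, and observes that its boundary lies in the $k$-skeleton while the simplex itself does not — which is exactly your second step. The obstacle you flag in your first step is genuine and is not addressed in the paper either: the paper's proof simply posits a $(k+1)$-simplex $(1,\cdots,k+2)$ of $K^f$ without justifying its existence, and your example $\Delta^k \cup_{\mathrm{pt}} \Delta^k$ (for $k \geq 1$) is a valid counterexample to the lemma as literally stated, since it is flag (hence the $k$-skeleton of a flag complex, namely itself), it is not $\Delta^k$, and all of its minimal missing faces have dimension $1$. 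So the statement tacitly requires that $K$ not itself be flag, i.e.\ that the ambient flag complex contain a face of dimension $k+1$, which is the reading under which it is used in Lemma~\ref{notflag2}. With that hypothesis granted, your proof and the paper's coincide.
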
\begin{proof}  As $K$ is the $k$-skeleton of a flag complex, it must have at least one face of dimension $k$, and no higher faces.     Let $K^f$ denote the minimal flag complex on the same 1-skeleton as $K$, so $K$ is the $k$-skeleton of $K^f$.  Up to a reordering of vertices, let $(1,\cdots,k+2)$ be a $(k+1)$-simplex of $K^f$. In $K$, the set $\{1,\cdots,k+2\}$ does not form a simplex, as the dimension is greater than $k$, but the boundary is contained in $K$, because each $k$ simplex in the boundary is in the $k$-skeleton of $K^f$, which is $K$. Therefore  $\{1,\cdots,k+2\}$ is a minimal missing face of $K$. \end{proof}

\begin{lem}\label{notflag2}
    Let $K$ be a flag complex. If a) at least one $K_i$ is not the $k$-skeleton of a flag complex  or b)  $K_i$ is the $k$-skeleton of a flag complex and the link of $i$ in $K$ is non-empty, then $K(K_1,\cdots,K_m)$ is not the $k$-skeleton of a flag complex. 
\end{lem}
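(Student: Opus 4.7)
The plan is to handle the two alternatives (a) and (b) separately, each by contradiction.

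For case (a), suppose some $K_i$ is not the $k$-skeleton of any flag complex. I would invoke Corollary \ref{full1} to view $K_i$ as a full subcomplex of $K(K_1,\cdots,K_m)$. If $K(K_1,\cdots,K_m)$ were equal to $F^{(k)}$ for some flag complex $F$, then restricting to the vertex set $V_i$ of $K_i$ would yield $K_i = (F^{(k)})_{V_i} = (F_{V_i})^{(k)}$, and $F_{V_i}$ is itself a flag complex because full subcomplexes of flag complexes are flag. This contradicts the hypothesis on $K_i$.

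For case (b), the strategy is to exhibit an explicit $(k+1)$-simplex of $K(K_1,\cdots,K_m)$, which immediately rules out its being any $k$-skeleton. Writing $K_i = F_i^{(k)}$ for a flag complex $F_i$, I would first note that if $\dim F_i < k$ then $K_i = F_i$ is itself flag, and handle this subcase by applying the argument of case (a) with a smaller parameter. Otherwise $\dim F_i \geq k$, so $K_i$ contains a $k$-simplex $\tau$. Since $\mathrm{lk}_K(i) \neq \emptyset$, there is a vertex $j$ with $\{i,j\} \in K$, and choosing any vertex $v_j \in K_j$, the definition of the substitution operation gives $K_i * K_j \subseteq K(K_1,\cdots,K_m)$. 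Hence the $(k+1)$-simplex $\tau \cup \{v_j\}$ lies in $K(K_1,\cdots,K_m)$, and since the $k$-skeleton of any flag complex has dimension at most $k$, this yields the desired contradiction.

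The main delicate point is the commutation $(F^{(k)})_V = (F_V)^{(k)}$ in case (a), together with the fact that $F_V$ is flag whenever $F$ is. Both are standard, but they constitute the crux of the full-subcomplex argument and should be recorded explicitly. A secondary subtlety is the edge case in (b) where $K_i$ coincides with its underlying flag complex, which requires a short reduction back to case (a).
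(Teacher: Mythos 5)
Your case (a) is correct and takes a genuinely different route from the paper. The paper argues via minimal missing faces (Lemma \ref{mf} combined with the argument of Lemma \ref{notflag1}), whereas you use Corollary \ref{full1} together with the two standard facts that restriction to a full subcomplex commutes with taking $k$-skeletons and that a full subcomplex of a flag complex is flag. Your version is cleaner and makes the logic fully explicit where the paper's appeal to missing faces is terse; both facts you identify as the crux are true and easy to verify.

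Case (b) has two genuine problems. First, the edge case $\dim F_i < k$ cannot be handled by ``the argument of case (a) with a smaller parameter'': in that situation $K_i = F_i$ is flag and therefore genuinely \emph{is} the $k$-skeleton of a flag complex, so the case (a) argument yields no contradiction --- and in fact the lemma fails there (take $K$ a single edge and each $K_i$ a point: the hypotheses of (b) hold, yet the substitution complex is an edge, which is the $1$-skeleton of a flag complex). The paper implicitly excludes this degeneracy by invoking Lemma \ref{kskelmis} to guarantee that $K_i$ has a minimal missing face on $k+2$ vertices; you need the same nondegeneracy assumption, not a reduction to (a). Second, and more seriously, exhibiting a $(k+1)$-simplex only rules out $K(K_1,\cdots,K_m)$ being the $j$-skeleton of a flag complex for $j \leq k$; it says nothing about $j > k$ (a priori the substitution complex could be a flag complex of dimension $k+1$, i.e.\ its own $(k+1)$-skeleton). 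The paper's proof establishes the stronger conclusion --- not the skeleton of a flag complex in \emph{any} dimension, which is the form used later in Theorem \ref{new1} --- precisely by first importing the minimal missing face on $k+2$ vertices from $K_i$ via Lemma \ref{mf}: a minimal missing face on $k+2 \geq 3$ vertices whose boundary is present forces any putative skeleton dimension to equal $k$, and only then does the $(k+1)$-simplex finish the argument. Your proof omits this pinning-down step entirely, so as written it proves a strictly weaker statement even in the nondegenerate case.
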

\begin{proof}
    If at least one $K_i$ is not the $k$-skeleton of a flag complex, then by combining Lemma \ref{mf} and the argument from Lemma \ref{notflag1}, $K(K_1,\cdots,K_m)$ cannot be the $k$-skeleton of some flag complex. So now consider case b). As $K_i$ is the $k$-skeleton of a flag complex, it will have a minimal missing face on at least $k+2$ vertices, by Lemma \ref{kskelmis}. Reordering vertices if necessary, let $\{1,\cdots,k+2\}$ be a minimal missing face of $K_i$. Therefore, if $K(K_1,\cdots,K_m)$ is the skeleton of a flag complex, it must be a $k$-skeleton. By assumption, the link of $i$ is non empty, and so there exists a $\sigma$ such that $\sigma \cup \{i\} \in K$. By definition of substitution, for all $\tau \in K_i$ $\sigma \cup \tau \in K(K_1,\cdots,K_m)$. More specifically, as the boundary of $(1,\cdots,k+2)$ is contained in $K_i$, $(1,\cdots,k+1) \cup \sigma$ is a simplex in $K(K_1,\cdots,K_m)$. But then we have a face of $K(K_1,\cdots,K_m)$ of a dimension that is strictly larger than $k$, and so $K(K_1,\cdots,K_m)$ is not the $k$-skeleton of a flag complex. 
\end{proof} 

We end this section by building some substitution complexes such that $\Omega \caa^{K(K_1,\cdots,K_m)} \in \cal{P}$, but $\caa^{K(K_1,\cdots,K_m)} \notin \cal{W}$ and $ K(K_1,\cdots,K_m)$ is not the $k$-skeleton of a flag complex. We do this by substituting into the boundary of a $m$-simplex. 

\begin{lem}
    There are simplicial equivalences  $(\partial\Delta^{l-1} \setminus \{l\})(K_{1},\cdots,K_{l-1},l)$ and  $\Delta^{l-2}(K_1,\cdots,K_{l-1})$ and $\mathrm{lk}_{\partial\Delta^{l-1}(K_{1},\cdots,K_{l-1},l)}(l)$ and $\partial \Delta^{l-2}(K_1,\cdots,K_{l-1})$.  
\end{lem}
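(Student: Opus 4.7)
The plan is to first identify the deletion and link underlying each side via elementary combinatorics on $\partial\Delta^{l-1}$, and then apply the ghost-vertex bookkeeping of Lemmas~\ref{3.5} and~\ref{deleteeq}. The two underlying identities I need are $\partial\Delta^{l-1}\setminus\{l\}=\Delta^{l-2}$ and $\mathrm{lk}_{\partial\Delta^{l-1}}(l)=\partial\Delta^{l-2}$, both regarded as simplicial complexes on the vertex set $\{1,\dots,l-1\}$. These follow immediately from the fact that the faces of $\partial\Delta^{l-1}$ are exactly the proper subsets of $\{1,\dots,l\}$: the faces avoiding $l$ are all subsets of $\{1,\dots,l-1\}$, forming $\Delta^{l-2}$; and the $\tau\subseteq\{1,\dots,l-1\}$ with $\tau\cup\{l\}$ a proper subset of $\{1,\dots,l\}$ are precisely the proper subsets of $\{1,\dots,l-1\}$, forming $\partial\Delta^{l-2}$.

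For the first equivalence, I would view $\partial\Delta^{l-1}\setminus\{l\}$ as a simplicial complex on the full vertex set $\{1,\dots,l\}$ with $l$ as a ghost vertex, and take the substitution pair at vertex $l$ to be $(\{l\},\emptyset)$. Lemma~\ref{3.5} then absorbs the substitution at the ghost vertex $l$ into a join with $L_l=\emptyset$, which is the identity under join. Combined with $\partial\Delta^{l-1}\setminus\{l\}=\Delta^{l-2}$, this yields
\[
(\partial\Delta^{l-1}\setminus\{l\})(K_1,\dots,K_{l-1},l)
= \Delta^{l-2}(K_1,\dots,K_{l-1})\ast\emptyset
= \Delta^{l-2}(K_1,\dots,K_{l-1}).
\]

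For the second equivalence, I would invoke the link statement in Lemma~\ref{deleteeq} with $M=\partial\Delta^{l-1}$, $i=l$, pairs $(K_j,\emptyset)$ for $j<l$, and $(\{l\},\emptyset)$ at vertex $l$. That lemma identifies $\mathrm{lk}_{\partial\Delta^{l-1}(K_1,\dots,K_{l-1},l)}(l)$ with the substitution complex built on $\mathrm{lk}_{\partial\Delta^{l-1}}(l)=\partial\Delta^{l-2}$, and the same ghost-vertex argument (since $l$ is no longer a vertex of $\partial\Delta^{l-2}$) reduces this to $\partial\Delta^{l-2}(K_1,\dots,K_{l-1})$. I do not expect any genuine obstacle; the only delicacy is careful bookkeeping of ghost vertices together with the convention that $\emptyset$ is the unit for the join, both of which are exactly what Lemmas~\ref{3.5} and~\ref{deleteeq} were built to handle.
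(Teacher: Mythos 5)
Your proposal is correct and follows essentially the same route as the paper, which simply cites Lemma~\ref{deleteeq} with $\kl=(\underline{K},\underline{\emptyset})$; you have merely made explicit the two elementary identities $\partial\Delta^{l-1}\setminus\{l\}=\Delta^{l-2}$ and $\mathrm{lk}_{\partial\Delta^{l-1}}(l)=\partial\Delta^{l-2}$ and the ghost-vertex step via Lemma~\ref{3.5} that the paper leaves implicit. No gaps.
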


\begin{proof}
These follows from Lemma \ref{deleteeq}, where $\kl = (\underline{K},\underline{\emptyset})$.
\end{proof}
We now have all the ingredients in place to produce new examples of spaces in $\cal{P}$.

\begin{thm}\label{new1}
    Let $\partial \Delta (K_1,\cdots, K_n)$ be a substitution complex such that for each $K_i$, we have $\Omega \caa^{K_i} \in \cal{P}$. Furthermore, suppose least one $K_i$ is such that $\caa^{K_i} \notin \cal{W}$. Then we have $\caa^{\partial \Delta (K_1,\cdots, K_n)} \notin \cal{W}$,  $\partial \Delta(K_1,\cdots,K_n)$ is not the $k$-skeleton of a flag complex for any $k$, and $\Omega \caa^{\partial \Delta (K_1,\cdots, K_n)} \in \cal{P}$.
\end{thm}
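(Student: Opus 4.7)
The plan is to verify each of the three conclusions separately, each by a direct appeal to results established earlier in the paper.

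First, to show $\caa^{\partial\Delta(K_1,\ldots,K_n)} \notin \cal{W}$, I would invoke Lemma \ref{notwedge1}: the hypothesis provides at least one $K_i$ with $\caa^{K_i} \notin \cal{W}$, which is exactly what that lemma requires.

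Next, for the claim that $\partial\Delta(K_1,\ldots,K_n)$ is not the $k$-skeleton of a flag complex for any $k$ (assuming $n\geq 3$, which is implicit to make the statement nontrivial, since $\partial\Delta^{1}$ is $0$-dimensional and thus flag), my plan is to apply the unlabeled lemma stated immediately before Lemma \ref{notflag1}. That lemma has two hypotheses: that $\partial\Delta^{n-1}$ is not flag, and that some $K_v$ with $v$ in a minimal missing face of $\partial\Delta^{n-1}$ contains a $1$-simplex. The first is immediate because the unique minimal missing face of $\partial\Delta^{n-1}$ is the full vertex set $\{1,\ldots,n\}$, so $\partial\Delta^{n-1}$ has a minimal missing face of size $n \geq 3$ and is therefore not flag. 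This same observation means every vertex of $\partial\Delta^{n-1}$ lies in its minimal missing face, so the second input reduces to showing that the particular $K_i$ with $\caa^{K_i}\notin\cal{W}$ must contain a $1$-simplex. For this I would argue by contradiction: a $0$-dimensional $K_i$ is shifted, and under the standing assumption $\Sigma A_j\in\cal{W}$ (the convention inherited from Theorem \ref{mainthrm2}), the result of \cite{grbic2011homotopy} on polyhedral products over shifted complexes would give $\caa^{K_i}\in\cal{W}$, contradicting the hypothesis.

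Finally, to establish $\Omega\caa^{\partial\Delta(K_1,\ldots,K_n)} \in \cal{P}$, I would observe that $\partial\Delta^{n-1}$ is the $(n-2)$-skeleton of the flag complex $\Delta^{n-1}$, so $\Omega\caa^{\partial\Delta^{n-1}} \in \cal{P}$ by \cite{stanton2024loop}. Combined with the hypothesis that $\Omega\caa^{K_i} \in \cal{P}$ for each $i$, Lemma \ref{subpreserve} then delivers $\Omega\caa^{\partial\Delta(K_1,\ldots,K_n)} \in \cal{P}$.

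The main obstacle is the $1$-simplex subclaim in the second part. The rest is essentially a one-line assembly of Lemmas \ref{notwedge1}, \ref{subpreserve}, and the skeleton-of-flag result \cite{stanton2024loop}, so the interesting content lies in correctly harvesting the topological hypothesis $\caa^{K_i}\notin\cal{W}$ to force combinatorial information (namely, a $1$-simplex) about $K_i$ itself.
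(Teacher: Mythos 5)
Your proposal is correct and follows essentially the same route as the paper: Lemma \ref{notwedge1} for the non-wedge claim, Corollary \ref{subpreserve} for membership in $\cal{P}$, and the observation that the distinguished $K_i$ must contain an edge (else $\caa^{K_i}$ would be a wedge) so that the unlabeled ``not the $k$-skeleton of a flag complex'' lemma applies to the non-flag complex $\partial\Delta^{n-1}$. If anything you are slightly more careful than the paper, which leaves $\Omega\caa^{\partial\Delta^{n-1}}\in\cal{P}$ implicit and cites Lemma \ref{notflag} where the stronger unlabeled lemma is actually needed.
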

\begin{proof}
     As for all $i \in [m]$, we have $\Omega \caa^{K_i} \in \cal{P}$, we obtain $\Omega \caa^{\partial \Delta (K_1,\cdots, K_n)} \in \cal{P}$ by Corollary \ref{subpreserve}. As there exists an $i$ such that $\caa^{K_i} \notin \cal{W}$, by Lemma \ref{notwedge1}, we obtain that $\caa^{\partial \Delta (K_1,\cdots, K_n)} \notin \cal{W}$. As $\caa^{K_i} \notin \cal{W}$, we have that $K_i$ must have at least one edge, as otherwise $K_i$ would be the disjoint union of points, and \cite{MR2321037} established that if $K_i$ is of this form, then $\caa^{K_i} \in \cal{W}$.  As $\partial \Delta^{n-1}$ is not flag, Theorem \ref{notflag} implies $\partial \Delta (K_1,\cdots, K_n)$ is not the $k$-skeleton of a flag complex. 
\end{proof}
To conclude this section we give a concrete examples of a new complexes such that, the moment-angle complex $\cal{Z}_{K(K_1,\cdots K_m)} \in \cal{P}$.
\begin{ex}
  Let $K = \partial \Delta^{n-1}$, and let each $K_i$ be the boundary of an $n_i$-gon, denoted $P_{n_i}$, where $n_i \geq 4$. Observe that $\cal{Z}_{P_4} \simeq S^3 \times S^3$, and that for $m > 4$ ,by \cite{MR0531977} we have $\cal{Z}_{P_{n}} \cong \#_{k=3}^{n-1}(S^k \times S^{n+2-k})^{\#(k-2){n-1 \choose k-1}}$. Therefore, for $n_i \geq 4$ we have that $\cal{Z}_{P_{n_i}} \notin \cal{W}$. As $\Omega \cal{Z}_{P_n} \simeq \Omega S^3 \times \Omega S^{n-1} \times \Omega S(P_m)$ \cite{MR3228428}, where $S(P_n)$ is a wedge of simply connected spheres, we have $\Omega \cal{Z}_{P_n} \in \cal{P}$. By Theorem \ref{new1}, $\cal{Z}_{\partial \Delta^{m-1}(P_{n_1},\cdots,P_{n_m})} \in \cal{P}$. 
\end{ex}
\bibliographystyle{alpha}
\bibliography{bib}

\end{document}